\theoremstyle{plain}
\newtheorem{Thm}{Theorem}[section]
\newtheorem{Lem}[Thm]{Lemma}
\newtheorem{Pro}[Thm]{Proposition}
\newtheorem{Cor}[Thm]{Corollary}
\newtheorem{Conj}[Thm]{Conjecture}
\theoremstyle{definition}
\newtheorem{Def}[Thm]{Definition}
\newtheorem{Exa}[Thm]{Example}
\newtheorem{Rem}[Thm]{Remark}
\newtheorem*{Rem-intro}{Remark}
\newcommand{\Dirlim}{\varinjlim}
\newcommand{\HH}{{\mathcal{H}}}
\newcommand{\LL}{{\mathcal{L}}}
\newcommand{\ZZ}{{\mathbb{Z}}}
\newcommand{\QQ}{{\mathbb{Q}}} 
\newcommand{\CC}{{\mathbb{C}}}
\newcommand{\RR}{{\mathbb{R}}}
 \newcommand{\PP}{{\mathbb{P}}}
\newcommand{\ca}{$C^*$-algebra}
\newcommand{\KK}{{\mathcal{K}}}
 \newcommand{\DD}{{\mathcal{D}}}
\newcommand{\calk}{\mathcal{Q}}
\def\:{\colon\!}
\let\csname sub jclassname@1991\endcsname \subjclassname
\begin{document}

\title{ Spanier-Whitehead $K$-duality for $C^*$-algebras  }



 \author[Kaminker]{Jerome Kaminker}
 \address{Department of Mathematics,
  University of California,  Davis,
      Davis, CA 95616}
\email{kaminker@math.ucdavis.edu}

\author[Schochet]{Claude L.~Schochet}
\address{Department of Mathematics,
     Technion,
     Haifa 32000, Israel}

\email{clsmath@gmail.com}

\thanks{ }
\keywords{ Spanier-Whitehead duality, K-theory, Operator algebras}
\subjclass[2010]{     46L80, 46L87, 46M20, 55P25 }
\begin{abstract}

Classical Spanier-Whitehead duality was introduced for the stable
homotopy category of finite CW 
complexes. Here we provide a comprehensive treatment of a
noncommutative version, termed Spanier-Whitehead $K$-duality, which 
is defined on the category of $C^*$-algebras whose $K$-theory is
finitely generated and that satisfy the UCT, with morphisms the $KK$-groups.  We explore 
what happens when these assumptions are relaxed in various ways. In
particular, we consider the relationship between Paschke duality 
and Spanier-Whitehead $K$-duality.

\end{abstract}
\maketitle

\tableofcontents
\section{Introduction}
 Classical Spanier-Whitehead duality is a generalization of Alexander duality,
which relates the homology of a space to the cohomology of its
complement in a sphere.  Ed Spanier and
J.H.C. Whitehead \cite{SW1}, \cite{SW2},  noting that the
dimension of the sphere did not play an essential role, adapted it to
the context of stable homotopy theory. Its
 history and its relation to other classical duality ideas are described 
in depth   by Becker and Gottlieb \cite{BG}.  To be more precise,
given a finite complex $X$ there is another finite complex, the
Spanier-Whitehead dual of $X$, denoted  $DX$, 
and a duality map $\mu: X \wedge DX \to S^n $ such that slant product with
the pull-back of the generator, $\mu^*([S^n])
 \in H^*(X \wedge DX)$  induces isomorphisms 
$$\diagdown  \mu^*([S^n]) :H_*(X) \to H^*(DX).$$  Moreover, $DDX $ is stably
 homotopy equivalent to $X$. 
 Note that there is no need for any sort of
orientability requirement, in contrast  to Poincar\'e duality.
Spanier-Whitehead duality turns out to be an interesting and
fairly universal notion which generalizes to many contexts. 

Spanier-Whitehead duality extends in a natural way to generalized
cohomology theories such as K-theory.  For a finite complex $X$, a
dual finite complex $DX$ turns out to be  a K-theoretic dual as well \cite{KKS}.
Since K-theory and ordinary cohomology detect torsion differently this
result requires proof.  The essential fact is that $X \to
K^*(DX)$ defines a homology theory naturally equivalent on finite
complexes to $K_*(X)$.  We shall refer to such a dual as a
{\emph{Spanier-Whitehead $K$-dual}}.

The bivariant version of K-theory introduced by Kasparov
\cite{Kasparov} is closely related to duality.  One has, for $X$ and
$Y$ finite complexes,
\begin{equation}
  \label{eq:4}
  KK^*(C(X),C(Y)) \cong KK^*(\CC, C(DX \wedge Y)) \cong K_*(C(DX \wedge Y))  \cong K^*(DX \wedge Y)
\end{equation}
and, in fact, this can be taken as a definition of KK-theory for
finite complexes, \cite{Schochet}.

Turning to duality for $C^*$-algebras, the subject of this paper, we see that there are several 
points which must be considered:
\begin{enumerate}
\item The $C^*$-algebras  which arise naturally in applications to
  topology, dynamics, and index theory are not simply $C(X)$ for $X$ a finite 
complex. They are generally noncommutative, and the topological spaces
commonly associated with them may be completely uninteresting or intractable.
\item The cohomology theories that have been used successfully on $C^*$-algebras are
$K$-theory and its various relatives.  These do generalize topological $K$-theory but 
have less structure. There is no natural product structure when the
algebras are noncommutative, and the Adams operations 
do not extend to the noncommutative case.
\item  For a separable, nuclear $C^*$-algebra  $A$ represented on a Hilbert
  space, the commutant of its projection into the Calkin algebra
  has some of the properties of a Spanier-Whitehead $K$-dual.  This is
  the Paschke dual of $A$, which we denote  $\PP (A)$.  It satisfies
   $$K_*(\PP(A)) \cong K^*(A).$$ 
  However, in general $\PP(A)$ is not separable or nuclear,
  the Kasparov product is not defined, there is no analogous description for $K_*(A)$ 
  and one cannot simply take the Paschke dual of the Paschke dual. 
\end{enumerate}

Keeping this in mind, we shall see what can be done.  There 
are several different arenas to investigate:

\begin{enumerate}

\item If we stay within the bootstrap category \cite{Top2}  and restrict to $C^*$-algebras whose 
$K$-theory groups are finitely generated, then there is a very satisfactory duality situation. 
Spanier-Whitehead $K$-duals exist, they are suitably unique, and \lq\lq everything" works 
out as one would expect from considering the category of finite cell complexes. 

\item If we stay within the bootstrap category and allow $K_*(A)$ to be countable but not necessarily
  finitely generated, then there exist $C^*$-algebras which cannot have
Spanier-Whitehead $K$-duals for algebraic reasons. 

\item If we keep the finite generation hypothesis but no longer require the bootstrap hypothesis, then 
various things can happen, most of which are bad.

\item  We will find separable and nuclear substitutes
for the Paschke dual which may be useful in various analytic
contexts. cf. \cite{HR}
 
 \end{enumerate}

We now give a more formal summary of our results.

{\bf{Section 2}} provides the basic definitions and basic properties of Spanier-Whitehead $K$-duality.  Our purpose here is to clarify  
  the various and sometimes contradictory definitions that appear in the literature.  Our definitions require separability because we 
want the full power of the Kasparov pairing. 

In {\bf{Section 3}} we explain the relationship between classical Spanier-Whitehead duality and Spanier-Whitehead $K$-duality. 
In a word, the first implies the second for finite complexes, but this  is not automatic from the axioms; it requires a spectral sequence 
comparison theorem that we established many years ago.  

Spanier-Whitehead $K$-duality arises in several different areas of
mathematics. {\bf{Section 4}} discusses how this type of duality
arises naturally even when the algebras are simple, hence are very far
from commutative ones.  We discuss examples drawn from hyperbolic
dynamics, the Baum-Connes conjecture, and others.

In {\bf{Section 5}} we start a discussion of the relationship of
Poincar\'e duality as used in noncommutative geometry and the
traditional notion from topology.

{\bf{Section 6}}  is devoted  to establishing a very important and basic result.  Every separable nuclear $C^*$-algebra in the bootstrap 
category with finitely generated $K$-theory groups has a Spanier-Whitehead $K$-dual 
that is suitably unique.
   We show further that this is the largest category of $C^*$-algebras with 
this property.

Then comes the bad news.   In {\bf{Section 7}} we give a concrete example of a $C^*$-algebra that is separable, nuclear, bootstrap and yet has no Spanier-Whitehead 
$K$-dual.  The example we provide is pretty basic: it is an AF-algebra
with $K_0 = \QQ $. 

{\bf{Section 8}} provides an interesting application of the theory to
mod-p K-theory.  Indeed, these issues led the second author to
initiate the current study.

{\bf{Section 9}} is devoted to Paschke duality.  We show how this differs in basic ways from Spanier-Whitehead $K$-duality  but resembles it in other ways. 
The main problem is that the Paschke dual is typically not separable or nuclear.  {\bf{Sections 10 and 11}}  develop some tools to help us replace non-separable, 
non-nuclear $C^*$-algebras with smaller versions of themselves.  We are motivated morally (though not at all in a technical sense) by the fact that 
any topological space is weakly equivalent to a CW-complex.

In a future paper we  will see what can be done when separability and nuclearity are not assumed. 
We have in mind 
the possibility of replacing $KK$ by the Brown-Douglas-Fillmore $Ext$ groups, which agree 
with the $KK$ groups when $A$ is separable nuclear.  A generalization
of Spanier-Whitehead $K$-duality would be very useful here and could
yield insight on  the following conjecture.

 Suppose that $A$ is a separable $C^*$-algebra in the bootstrap category with $K_*(A)$ finitely generated.  Then it has a Spanier-Whitehead 
$K$-dual $DA$   and it also has a Paschke dual
$\PP (A)$.  The UCT implies that there is an element   $u \in KK_0(DA, \PP(A)) $ inducing an isomorphism 
\[
u_* : K_*(DA) \overset{\cong}\longrightarrow K_*(\PP(A)).
\]
We may regard $u \in  Ext(DA, S\PP(A))$  since $DA$ is separable nuclear.

\begin{Conj}  
There exists an element $v   \in  Ext(S\PP(A), DA ) $  and enough of the $KK$-pairing transfers over to $Ext$ 
so that one can say that $DA$ and  $S\PP(A)$ are $\lq\lq Ext  $
equivalent\rq\rq\,\, via the duality classes $u$ and $v$, in a suitable categorical setting.
\end{Conj}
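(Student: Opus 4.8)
The plan is to push the entire problem onto a separable nuclear model of $\PP(A)$, so that every Kasparov product we actually form has a separable middle term and is therefore legitimate. The tools of Sections 10 and 11 should produce, in the spirit of a CW-approximation (which, being contravariant on function algebras, runs from the large algebra to the small one), a separable nuclear bootstrap $C^*$-algebra $B$ together with a $*$-homomorphism $\phi\: \PP(A) \to B$ inducing an isomorphism $\phi_* \: K_*(\PP(A)) \to K_*(B)$. Granting such a $\phi$, I would first push the given class forward along $\phi$ and set
\[
w := \phi_*(u) \in KK_0(DA, B), \qquad v := [\phi] \otimes_B w^{-1} \in KK_0(\PP(A), DA).
\]
The push-forward $\phi_* \: KK_0(DA, \PP(A)) \to KK_0(DA, B)$ is ordinary functoriality in the second variable and needs no separability of $\PP(A)$. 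On $K$-theory $w_* = \phi_* \circ u_*$ is an isomorphism, so $w$ is a class between separable nuclear bootstrap algebras with finitely generated $K$-theory inducing a $K$-isomorphism; by the UCT it is a $KK$-equivalence, and $w^{-1} \in KK_0(B, DA)$ is its genuine inverse. Under the standard identifications $KK_0(\PP(A), DA) \cong KK_1(S\PP(A), DA) \cong Ext(S\PP(A), DA)$, this $v$ is the desired reverse duality class.

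Next I would verify the two composites. The product $v \otimes_{DA} u \in KK_0(\PP(A), \PP(A))$ is formed over the \emph{separable} middle term $DA$, hence is unconditionally defined; writing it as $[\phi] \otimes_B (w^{-1} \otimes_{DA} u)$ and passing to $K$-theory gives $u_* \circ (w^{-1})_* \circ \phi_* = u_* \circ (u_*)^{-1} = \mathrm{id}$ on $K_*(\PP(A))$, the $Ext$-theoretic shadow of $DDA \simeq A$. The opposite product $u \otimes_{\PP(A)} v$ is a priori illegitimate, since its middle term is the non-separable $\PP(A)$; but associativity rewrites it as $\phi_*(u) \otimes_B w^{-1} = w \otimes_B w^{-1}$, and I would simply \emph{define} it by this always-legal expression, obtaining the honest identity $1_{DA} \in KK_0(DA, DA)$. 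Thus the pair $(u,v)$ behaves like mutually inverse morphisms: one composite is a true identity, while the other can only be pinned down on $K$-theory, because $\PP(A)$ is not in the bootstrap class and so is not detected by its $K$-groups. This asymmetry is precisely the sense in which \lq\lq enough of the pairing transfers over\rq\rq.

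Finally I would assemble this into a category whose objects are separable nuclear algebras together with their Paschke-type duals, whose morphisms are $Ext$-classes, and in which composition is declared only through representatives with a separable middle term (with $Ext(S\PP(A), DA)$ itself interpreted via the separable model $B$, since its source is non-separable). In that framework $(u, v)$ exhibits $DA$ and $S\PP(A)$ as $Ext$-equivalent. The main obstacle — and the reason this remains a conjecture — is the very first step: producing the co-approximation $\phi \: \PP(A) \to B$ as an honest $*$-homomorphism (or at least a class that can be pushed forward) which is a $K$-theory equivalence onto a separable nuclear bootstrap algebra. Unlike CW-approximation there is no general mechanism forcing such a weak equivalence to exist for a commutant in the Calkin algebra, and even once it exists one must show it is natural in $A$ and that the partially defined composition law is associative on the classes that occur. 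Controlling these naturality and $\varprojlim^1$ issues is where the real work will lie.
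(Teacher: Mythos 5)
You should first be clear that the paper contains no proof of this statement: it is posed as an open Conjecture, explicitly deferred to ``a future paper,'' and the only supporting evidence offered is the $A=\CC$ computation at the end of Section 9, where $\PP(\CC)=\calk$ and the extension class $[\tau]\in Ext(C_0(\RR),\KK)$ built from the unilateral shift ``in a sense'' inverts the Paschke isomorphism. So there is nothing to compare your argument against; the only question is whether it actually closes the conjecture, and it does not --- as you yourself concede, everything hinges on the very first step.

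That gap is more serious than your closing paragraph suggests, because the tools of Sections 10 and 11 run in the opposite direction from what you need. Hirshberg's theorem (Theorem 10.6) produces a separable subalgebra $\theta\PP(A)\subseteq\PP(A)$ whose \emph{inclusion} $\theta\PP(A)\to\PP(A)$ is a weak $K$-equivalence --- a map from the small algebra into the large one --- and Section 11 likewise produces maps $\beta A \to T \leftarrow S^3A$ \emph{into} an auxiliary algebra; neither construction yields a $*$-homomorphism out of $\PP(A)$ onto a separable model, and no such mechanism is known. Even if you replace your $\phi$ by the inclusion $\iota:\theta\PP(A)\to\PP(A)$, two obstructions flagged in the paper remain: $\PP(A)$ is typically non-nuclear, and the corollary making $\theta$ of a nuclear algebra nuclear does not apply, so $\theta\PP(A)$ need not be nuclear or bootstrap; and the closing Remark of Section 10 states that $\theta\PP(A)$ is not known to satisfy the UCT --- yet your step ``$w_*$ is an isomorphism, hence $w$ is a $KK$-equivalence'' uses the UCT for the separable model in an essential way. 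Without it you cannot invert anything, and without an inverse you have no candidate $v$. Two further soft spots: your assertion that $v\otimes_{DA}u$ is ``unconditionally defined'' because the middle term is separable is unjustified --- Kasparov's product requires separability of the \emph{first} variable, which is exactly why the paper says the full product with $\PP(A)$ is unavailable and only slant products survive; and \emph{defining} the illegitimate composite $u\otimes_{\PP(A)}v$ to be $w\otimes_B w^{-1}$ proves nothing --- the existence of a coherent partially defined composition (associative, independent of the choice of model, natural in $A$) \emph{is} the ``suitable categorical setting'' whose construction is the content of the conjecture. What you have is a reasonable strategy statement consistent with the paper's hopes, not a proof.
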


 \vspace {.3in}

{\emph{Some technical notes:}}
\begin{enumerate}
\item Signs:  In the classical Spanier-Whitehead duality pairing $X \wedge DX \to S^n$, the 
number $n$ is determined by the dimension of the sphere in which $X$ is initially embedded. 
It is thus not intrinsic to the problem. It does, however, control the shift in dimension that occurs when 
passing from the homology of $X$ to the cohomology of $DX$ and hence $DX$ is frequently 
denoted $D_nX$ or $D_{n-1}X$.  Working in periodic $K$-theory the number is even 
less 
important, since all that matters is its parity. The result is that either the  duality classes $\mu $ and $\nu $ both appear in 
$KK_1$ or both appear in $KK_0$.  In the case of $KK_0$ no attention to signs is required. In the 
case of $KK_1$ (and this is the case in the paper of  Putnam-Kaminker-Whittaker \cite{KPW}, for example) there are various
changes in sign forced by the parity requirement. We will stay away from this case for simplicity, confident that the reader can 
see the necessary changes needed from the Putnam-Kaminker-Whittaker paper.

\item When we say that \lq\lq $A$ satisfies 
the UCT" we mean that for all $C^*$-algebras $B$ with countable approximate unit,
the Kasparov groups $KK_*(A,B)$ satisfy the Universal Coefficient Theorem \cite{RS}. We
conjectured at the Kingston conference (1980)  that every separable nuclear $C^*$-algebra was equivalent to a 
$C^*$-algebra in the bootstrap category \cite{Top2} and hence satisfied the UCT; this 
conjecture is still open and more plausible than ever.

\item The analogy between the stable homotopy category and the
  category of $C^*$-algebras with $KK$-theory as morphisms has been
  developed by several people, e.g. \cite{Meyer, Meyer-Nest}.  In that
  context Spanier-Whitehead K-duality and classical Spanier-Whitehead
  duality arise in similar ways \cite{Meyer,BG}.  The fact that there are geometric
  and dynamical instances in the noncommutative setting perhaps
  enhances their interest.  Nevertheless, we will not develop this
  aspect in the present paper.

  \end{enumerate}

It is a pleasure to acknowledge assistance from Heath Emerson, Peter Landweber, 
  Lenny Makar-Limonov, Orr Shalit, and Baruch Solel
  in the creation of this article. Special thanks go to Ilan Hirshberg
  and Jonathan Rosenberg for their substantial contributions to Section
  10. A special thanks goes to the referee for a meticulous and very
  helpful report.  Claude Schochet is  also very conscious of a bridge game
that he played as a graduate student in 1968 with Ed Spanier, G.W.  Whitehead, and N.E. Steenrod, sitting in for his advisor Peter May and thinking that his whole mathematical career 
was on the line.

\vglue .5in

\section{Spanier-Whitehead $K$- Duality}

The existing literature is somewhat confused regarding the proper definition of 
Spanier-Whitehead $K$-duality.  The basic idea is natural and seems to have appeared first in
\cite{Kasparov}.  Connes considered a noncommutative version of
Poincar\'e duality which refers to algebras dual to their opposite
algebras, but some of his examples have the important additional structure of a
fundamental class, which
make them especially interesting.  They were precursors to his notion of
spectral triple as a noncommutative manifold.  Basically, he proved
that the existence of the fundamental class yielded what we are
calling Spanier-Whitehead K-duality classes.  In \cite{KP} Kaminker
and Putnam referred to it as Spanier-Whitehead duality explicitly.
The definitions used are essentially the same, but there are some
technical points which we will clarify in this section.

\begin{Def}  
Suppose given separable $C^*$-algebras $A$ and $DA$ together 
with $KK$-classes 
 
\[
\mu \in KK_*(\CC, A\otimes DA).
\]
\[
\nu \in KK_*(A\otimes DA , \CC )
\]
with the property that 
\[
\mu \otimes _A \nu = \pm 1_{DA}  \in KK_0(DA, DA)
\]
and 
\[
\mu \otimes _{DA} \nu = \pm 1_{A}  \in KK_0(A, A).
\]
Then $A$ and $DA$ are said to be {\emph{Spanier-Whitehead $K$-dual}}
with duality 
classes $\mu$ and $\nu $. 
\end{Def}

The separability condition is to ensure that the $KK$-products 
are defined. (We discuss weakening this condition later in the paper.) 
Note that this definition is symmetric.    If both classes have even parity then the sign is $+1$ in both 
cases; in the odd case one introduces signs as in \cite{KPW,E1}.

\begin{Thm}\label{twoduals}
\begin{enumerate}
 \item 
Suppose given Spanier-Whitehead $K$-dual $C^*$-algebras $A$ and $DA$. 
Then each of the associated slant product maps

\begin{itemize}
\item 
\[
(-)\otimes_A  \nu : K_*(A) \longrightarrow K^*(DA)
\]
\item 
\[
(-)\otimes _{DA} \nu : K_*(DA) \longrightarrow K^*(A)
\]
\item
\[
 \mu \otimes _{DA}(-)   : K^*(DA) \longrightarrow K_*(A)
\]
\item
\[
  \mu \otimes _A (-)  : K^*(A) \longrightarrow K_*(DA)
\]
\end{itemize}
is an isomorphism, and the compositions 
\[
K_*(A) \longrightarrow K^*(DA)  \longrightarrow  K_*(A)  
\]
and
\[
  K_*(DA)   \longrightarrow K^*(A) \longrightarrow  K_*(DA)
\]
  are each $\pm 1$.  

\item Conversely, given separable $C^*$-algebras $A$ and $DA$ together with classes 
$\mu$ and $\nu$,   if the indicated compositions are $\pm 1$ then $A$ and $DA$ are Spanier-Whitehead $K$- dual.
\end{enumerate}

\end{Thm}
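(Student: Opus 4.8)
The plan is to derive both directions from a single ingredient, the associativity of the Kasparov product, together with the two defining relations $\mu\otimes_A\nu = \pm 1_{DA}$ and $\mu\otimes_{DA}\nu = \pm 1_A$. First I would observe that the four slant products fall into two complementary pairs—$(-)\otimes_A\nu$ with $\mu\otimes_{DA}(-)$, and $(-)\otimes_{DA}\nu$ with $\mu\otimes_A(-)$—which I will show to be mutually inverse up to sign. The heart of part (1) is the identity
\[
\mu\otimes_{DA}(x\otimes_A\nu) = x\otimes_A(\mu\otimes_{DA}\nu), \qquad x\in K_*(A),
\]
whose two sides are the two ways of contracting the three classes $\mu$, $x$, $\nu$ over the $A$- and $DA$-factors. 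Kasparov's associativity theorem guarantees that these agree; concretely one forms the external product $\mu\otimes x$ and contracts it against $\nu$ over $A\otimes DA$, then regroups the contractions. Since the right-hand side is $x\otimes_A(\pm 1_A) = \pm x$, the composition $K_*(A)\to K^*(DA)\to K_*(A)$ is $\pm\,\mathrm{id}$.

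The reverse composition $K^*(DA)\to K_*(A)\to K^*(DA)$ is computed in the same way and equals left Kasparov multiplication $(\mu\otimes_A\nu)\otimes_{DA}(-) = (\pm 1_{DA})\otimes_{DA}(-) = \pm\,\mathrm{id}$, so the first pair consists of mutually inverse isomorphisms. The second pair is handled identically after interchanging the roles of $A$ and $DA$, this time invoking the other defining relation. This establishes all four isomorphisms and both displayed compositions simultaneously, completing part (1).

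For the converse I would read these computations backwards. By the identity above and its three companions, the two displayed compositions are precisely Kasparov multiplication by the classes $\mu\otimes_{DA}\nu\in KK_0(A,A)$ and $\mu\otimes_A\nu\in KK_0(DA,DA)$; asserting that these compositions equal $\pm 1$ is therefore the same as asserting $\mu\otimes_{DA}\nu = \pm 1_A$ and $\mu\otimes_A\nu = \pm 1_{DA}$, which are exactly the defining relations. The one genuinely delicate point—and the main obstacle—is that this equivalence must be read at the level of the inducing $KK$-class and not merely at the level of its action on $K$-theory. A class in $KK_0(A,A)$ can act as $\pm\,\mathrm{id}$ on all of $K_*(A)$ while differing from $\pm 1_A$ by a nonzero element of the $\mathrm{Ext}$ summand of the Universal Coefficient sequence (for example when $K_*(A)$ has torsion), and such a discrepancy is invisible to $K$-theory yet still obstructs the $KK$-level relation. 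Thus the converse is immediate once the compositions are interpreted as $KK$-classes—equivalently, whenever $K_*(A)$ and $K_*(DA)$ are torsion-free, so that the UCT map $KK_0\to\mathrm{Hom}$ is injective—and in general it is exactly this passage from $K$-theory back to $KK$, governed by the UCT, that must be handled with care.
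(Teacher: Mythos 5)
Your part (1) is essentially the paper's own proof: your key identity $\mu\otimes_{DA}(x\otimes_A\nu)=x\otimes_A(\mu\otimes_{DA}\nu)$ is exactly what the paper verifies by its chain of equalities --- rewriting $\mu\otimes_{DA}x$ as $\mu\otimes_{A\otimes DA}(1_A\otimes x)$, regrouping the contraction against $\nu$ over $A\otimes DA$ by associativity, and commuting $x$ past $\nu$ in the external product over $\CC$ --- after which both arguments conclude by substituting $\mu\otimes_{DA}\nu=\pm 1_A$ and $\mu\otimes_A\nu=\pm 1_{DA}$. Where you genuinely differ is part (2). The paper removes the ambiguity you point out by never formulating the composite on bare $K$-theory: it proves the identity for classes $x\in KK_0(DA\otimes F,\CC\otimes G)$ with arbitrary auxiliary separable algebras $F$, $G$, so that the hypothesis of the converse is that this generalized composite is the identity; its proof is then your ``tautology'' made concrete, namely specialize $F=\CC$, $G=DA$, $x=1_{DA}$, so that the hypothesis literally evaluates to $\mu\otimes\nu=1_{DA}$ (and symmetrically for the other relation). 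Your caveat --- that a class $u\in KK_0(A,A)$ may act as $\pm\,\mathrm{id}$ on $K_*(A)$ while differing from $\pm 1_A$ by an element of $\Ker(\gamma_\infty)$, so the literal $K$-theoretic reading of the converse is strictly weaker --- is correct, and it is exactly the issue the paper takes up immediately after the theorem, in the Corollary on $KK$-invertible classes and the Proposition invoking the UCT (which, even then, yield only $KK$-invertibility rather than equality with $\pm 1$). So the two treatments of the converse agree mathematically: the paper builds the resolution into its formulation through the coefficient algebras $F$ and $G$, while you arrive at it by diagnosing where the naive statement fails.

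One correction to your final claim: ``whenever $K_*(A)$ and $K_*(DA)$ are torsion-free'' is not equivalent to injectivity of $\gamma_\infty\colon KK_0(A,A)\to \mathrm{Hom}(K_*(A),K_*(A))$. Injectivity requires the $\mathrm{Ext}$ term of the UCT sequence to vanish, and torsion-freeness does not suffice: $\mathrm{Ext}(\QQ,\ZZ)\cong\RR$, which is precisely the fact the paper exploits in Section 7 to produce an algebra with no Spanier-Whitehead $K$-dual. What you want is $K_*$ free abelian (equivalent to torsion-free only for finitely generated groups), and the UCT itself must be added as a hypothesis, since it is not among the assumptions of the theorem.
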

 
\begin{proof}  (Although versions of this appear in the literature, we
  include a proof for completeness.)

We are given duality classes 
\[
\mu \in KK_0(\CC, A\otimes DA)    \qquad\qquad \nu \in KK_0(A\otimes DA, \CC)
\]
so that we are considering the case where no signs appear.
Instead of assuming that $x \in KK_0(DA, \CC )$, which would suffice for the first part 
of the proof, we assume that we are given auxiliary separable $C^*$-algebras $F$ and $G$ and that
\[
x \in KK_0(DA\otimes F, \CC \otimes G).
\]

We shall prove that the composite map 
\begin{equation}
\label{thing}
KK_0(DA\otimes F, \CC \otimes G)
 \xrightarrow{\mu  \otimes _{DA} (-)}
KK_0(\CC\otimes F, A\otimes G)
\xrightarrow{(-)\otimes _A \nu }   KK_0(DA\otimes F, \CC \otimes G)
\end{equation}
is the identity map. By symmetry, it follows that the dual composite map 
\[
KK_0(\CC \otimes F, A \otimes G) \to KK_0(\CC\otimes F, A\otimes G) 
\]
is an isomorphism and this proves the proposition. 

 Let $1_A \in KK_0(A,A) $ denote the class of the identity map, and 
then let 
\[
1_A \otimes w = 1_A \otimes _{\CC} w  \in KK_0(A\otimes Y, A\otimes Z) 
\]
 denote the external product of $1_A$ with 
some class  $w \in KK_0(Y, Z)$.  Then:

\[
(\mu \otimes _{DA} x)\otimes _A \nu   =
\]
\[
=  \big(\mu \otimes _{A\otimes DA} (1_A \otimes x)\big) \otimes _A \nu
\]
\[
= \big[ \big(\mu \otimes _{A\otimes DA} (1_A \otimes x)\big)\otimes 1_{DA} \big]\otimes _{A\otimes DA} \nu
\]
 \[
=\big[ \mu \otimes _{A\otimes DA} \big( 1_A \otimes x \otimes 1_{DA} \big)\big] \otimes _{A\otimes DA} \nu
\]
 \[
=\mu \otimes _{A\otimes DA} \big[\big( 1_A \otimes x \otimes 1_{DA} \big)\otimes _{A\otimes DA} \nu\big]
\]
(because $\otimes _{A\otimes DA}$ is associative)
\[
= \mu \otimes _{A\otimes DA} \big( x \otimes _{\CC} \nu \big)
\]
 \[
= \mu \otimes _{A\otimes DA} \big( \nu  \otimes _{\CC} x \big)
\]
(because $\otimes _{\CC }$ is commutative)
\[
= ( \mu \otimes _A \nu) \otimes _{DA} x  = 1_{DA} \otimes _{DA} x = x  .
\]

Conversely, suppose that the composite (\ref{thing})
\[
KK_*(DA\otimes F,\CC\otimes G ) \to KK_*(DA\otimes F, \CC\otimes G  ) 
\]
 is the identity. 
This translates into the formula
 \[
(\mu \otimes _{DA} x)\otimes _A \nu   =   x
\]
for all $x \in KK_0(DA\otimes F, \CC\otimes G )$.  Set $F = \CC $,  $G = DA$  and $x = 1_{DA}$. 
Then we have 
\[
1_{DA} =   (\mu \otimes _{DA} 1_{DA} )\otimes _A \nu   = \mu \otimes _{DA} \nu
\]
as desired. By symmetry, 
\[
1_A = \nu \otimes _A \mu 
\]
and the proof is complete.

\end{proof}

\begin{Cor}  Suppose given two pairs of Spanier-Whitehead dual algebras $A$ and $DA$ and also $B$ and $DB$ with associated duality classes 
$\mu_A$, $\nu_A $, $\mu_B$, $\nu_B$.  Then these classes determine canonical isomorphisms
\[
KK_*(A, B) \cong KK_*(DB, DA ).
\]
\end{Cor}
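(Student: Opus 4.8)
The plan is to derive the corollary formally from Theorem~\ref{twoduals}, whose proof establishes the stronger statement that, for \emph{any} separable auxiliary algebras $F$ and $G$, the slant products give mutually inverse isomorphisms between $KK_0(DA\otimes F,\CC\otimes G)$ and $KK_0(\CC\otimes F,A\otimes G)$. The auxiliary algebras were carried through the proof precisely so that the result could be reused here. The key observation is that this generalized slant-product isomorphism, applied to each of the two dual pairs with an appropriate choice of $F$ and $G$, identifies both $KK_*(A,B)$ and $KK_*(DB,DA)$ with one and the same group, namely $KK_*(A\otimes DB,\CC)$.

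First I would apply the generalized isomorphism to the dual pair $(B,DB)$, taking $F=A$ and $G=\CC$, which produces
\[
KK_*(\CC\otimes A,\,B\otimes\CC)\;\xrightarrow{\ \cong\ }\;KK_*(DB\otimes A,\,\CC\otimes\CC).
\]
After the canonical identifications $\CC\otimes C\cong C$ and the commutativity of the spatial tensor product, this reads $KK_*(A,B)\cong KK_*(A\otimes DB,\CC)$. Next I would apply the generalized isomorphism to the dual pair $(A,DA)$, now in the symmetric form obtained by interchanging the roles of $A$ and $DA$ (legitimate, since the text notes that the definition of Spanier--Whitehead $K$-duality is symmetric), taking $F=DB$ and $G=\CC$. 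This yields
\[
KK_*(\CC\otimes DB,\,DA\otimes\CC)\;\cong\;KK_*(A\otimes DB,\,\CC),
\]
that is, $KK_*(DB,DA)\cong KK_*(A\otimes DB,\CC)$.

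Composing the first isomorphism with the inverse of the second gives the desired $KK_*(A,B)\cong KK_*(DB,DA)$. Since each of the two isomorphisms is effected by exterior Kasparov multiplication with a duality class followed by an internal Kasparov product ($\nu_B$ in the first step, $\mu_A$ and $\nu_A$ in the second), the composite is given by an explicit formula in the four classes $\mu_A,\nu_A,\mu_B,\nu_B$; this is exactly what makes the isomorphism canonical. The associativity and commutativity of the Kasparov product, already invoked in the proof of Theorem~\ref{twoduals}, justify all the rearrangements of tensor factors.

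The only genuine subtleties are bookkeeping. Theorem~\ref{twoduals} is stated for $KK_0$, so to obtain the full $\ZZ/2$-graded statement one absorbs the degree shift into the auxiliary algebras, for instance via $KK_1(C,D)\cong KK_0(C,SD)$, applying the ungraded result to suitably suspended algebras. This is also where the sign conventions of the odd case (cf.~\cite{KPW,E1}) intervene, whereas in the even case no signs appear. I expect this grading-and-sign accounting, rather than any conceptual difficulty, to be the main obstacle, since the essential content has already been established in Theorem~\ref{twoduals}.
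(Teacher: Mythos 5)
Your proposal is correct and takes essentially the same route as the paper: the paper's proof defines the canonical isomorphism as the composite
\[
KK_*(A,B)\ \xrightarrow{\ (-)\otimes_B \nu_B\ }\ KK_*(A\otimes DB,\CC)\ \xrightarrow{\ \mu_A\otimes_A(-)\ }\ KK_*(DB,DA),
\]
obtained as special cases of the auxiliary-algebra isomorphism (\ref{thing}) and its dual, which is exactly your factorization through $KK_*(A\otimes DB,\CC)$ with $(F,G)=(A,\CC)$ for the pair $(B,DB)$ and $(F,G)=(DB,\CC)$ for the pair $(A,DA)$ in symmetric form. Your additional suspension bookkeeping for the $KK_1$ case is detail the paper leaves implicit but is entirely consistent with it.
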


\begin{proof}  The natural map 

\[
KK_*(A,B)  \xrightarrow{(-)\otimes _B \nu _B }   KK_*(A\otimes DB, \CC )  \xrightarrow{ \mu_A \otimes _A (-) }  KK_*(DB, DA)
\]
\vglue .2in
\flushleft is obtained by taking special cases of equation (\ref{thing}) and its dual. 
\end{proof} 
  
\begin{Cor}
Suppose with the notation above that we are given 
 
\[
\mu \otimes _A \nu = u  \in KK_0(DA, DA)
\]
and 
\[
\mu \otimes _{DA} \nu = v  \in KK_0(A, A)
\]
where $u$ and $v$ are $KK$-invertible elements, not necessarily $\pm 1$.  
Then the four slant products listed in Theorem  \ref{twoduals} will be isomorphisms, and 
the composites
\[
K_*(A) \to K_*(A)
\]
and
\[
K_*(DA) \to K_*(DA)
\]
will be the isomorphisms  $v_*$ and $u_*$ respectively. 

Conversely,  if $A$ and $DA$ satisfy the UCT
  and the composites $u_*$ and $v_*$ are isomorphisms then $u$ and 
$v$ are $KK$-invertible.
\end{Cor}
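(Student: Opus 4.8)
The plan is to handle the two implications separately, in both cases reusing the formal Kasparov-product bookkeeping from the proof of Theorem~\ref{twoduals} but now tracking $u$ and $v$ instead of collapsing them to $\pm 1$. For the forward direction, I would first rerun the central computation of Theorem~\ref{twoduals} verbatim, except that I stop one line early: that chain of associativity/commutativity moves establishes, for all $x \in KK_0(DA\otimes F,\CC\otimes G)$, the identity
\[
(\mu \otimes_{DA} x)\otimes_A \nu \;=\; (\mu \otimes_A \nu)\otimes_{DA} x \;=\; u \otimes_{DA} x,
\]
the last equality now being the definition of $u$ rather than the identity $1_{DA}$. Running the same manipulation with the roles of $A$ and $DA$ interchanged, and running it in the complementary association, produces the companion identities
\[
\mu \otimes_{DA}(w \otimes_A \nu) = w \otimes_A v, \qquad \mu \otimes_A(z \otimes_{DA}\nu) = z \otimes_{DA} u .
\]
None of these uses invertibility of $u$ or $v$; they are pure consequences of associativity and commutativity of the Kasparov product.

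Specializing to $F=G=\CC$, the first identity says that the composite $K^*(DA)\to K_*(A)\to K^*(DA)$ built from the third and first slant products is precomposition by $u$, while the two companion identities say that the round-trip $K_*(A)\to K^*(DA)\to K_*(A)$ is $(-)\otimes_A v = v_*$ and the round-trip $K_*(DA)\to K^*(A)\to K_*(DA)$ is $(-)\otimes_{DA} u = u_*$; the fourth composite $K^*(A)\to K_*(DA)\to K^*(A)$ is precomposition by $v$. Because $u$ and $v$ are $KK$-invertible, all four of these composites are isomorphisms. I then invoke the elementary categorical fact that if $g\circ f$ and $f\circ g$ are both isomorphisms then $f$ and $g$ are each isomorphisms: applied to the pair $\bigl(\mu\otimes_{DA}(-),\,(-)\otimes_A\nu\bigr)$ this makes the first and third slant products isomorphisms, and the symmetric pair $\bigl(\mu\otimes_A(-),\,(-)\otimes_{DA}\nu\bigr)$ handles the second and fourth. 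The identification of the two round-trips $K_*(A)\to K_*(A)$ and $K_*(DA)\to K_*(DA)$ with $v_*$ and $u_*$ is exactly the content of the displayed identities above.

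For the converse, the essential input is the Universal Coefficient Theorem. I would first observe, as noted above, that the identification of the two round-trips with $v_*$ and $u_*$ is purely formal and needs no invertibility; hence under the converse hypotheses we know outright that $v_*\colon K_*(A)\to K_*(A)$ and $u_*\colon K_*(DA)\to K_*(DA)$ are isomorphisms. Since $A$ and $DA$ satisfy the UCT, each of $KK_0(A,A)$ and $KK_0(DA,DA)$ sits in the UCT short exact sequence, whose surjection to $\mathrm{Hom}(K_*,K_*)$ detects $KK$-invertibility: a class in $KK_0$ between algebras satisfying the UCT is a $KK$-equivalence precisely when the map it induces on $K_*$ is an isomorphism \cite{RS}. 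Applying this criterion to $v\in KK_0(A,A)$ and to $u\in KK_0(DA,DA)$ yields that $v$ and $u$ are $KK$-invertible, completing the proof.

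The forward bookkeeping, though the longer part, is a careful-but-routine adaptation of Theorem~\ref{twoduals}; the genuine leverage—and the step I would be most careful to state precisely—is the UCT invertibility criterion used in the converse. Without the UCT hypothesis a $KK$-class inducing an isomorphism on $K$-theory need not be a $KK$-equivalence (the $\mathrm{Ext}$-term can obstruct it), so it is exactly this structural consequence of the UCT that forces $u$ and $v$ to be invertible, and it is the only non-formal ingredient in the argument.
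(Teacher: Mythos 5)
Your proposal is correct and takes essentially the same route as the paper: the forward direction is the Theorem \ref{twoduals} computation rerun with $u$ and $v$ in place of $1_{DA}$ and $1_{A}$ (the paper compresses this to ``mostly immediate from the Theorem''), and the converse rests on the fact that, for algebras satisfying the UCT, a class in $KK_0$ inducing an isomorphism on $K_*$ is $KK$-invertible. The only difference is that you cite this criterion from \cite{RS}, whereas the paper supplies it as a standalone proposition---its ``missing link''---proved via the ring splitting of the UCT sequence and the relation $\Ker(\gamma_\infty)^2=0$; the underlying mathematics is the same.
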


 \begin{proof} This is mostly immediate from the Theorem. The missing link is 
provided by the following proposition, which is of independent interest.
\end{proof}

\begin{Pro} Suppose that $A$ is a $C^*$-algebra satisfying the UCT  and 
there is an element $u \in KK_0(A,A)$ such that 
\[
u_* : K_*(A) \longrightarrow K_*(A)
\]
is an isomorphism. Then $u$ is $KK$-invertible.  If $u_* = \pm 1$ then $u = \pm 1 + k$ 
for some $k \in Ker(\gamma _\infty )$, where
\[
\gamma _\infty : KK_*(A,A) \longrightarrow Hom (K_*(A), K_*(A))
\]
is the index map in the UCT.

\end{Pro}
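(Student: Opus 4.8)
The plan is to read the Universal Coefficient Theorem as a statement about the \emph{ring} $KK_0(A,A)$ under the Kasparov composition product. Because $A$ satisfies the UCT, there is a natural short exact sequence
\[
0 \longrightarrow Ext^1_{\ZZ}\big(K_*(A), K_*(A)\big) \longrightarrow KK_*(A,A) \xrightarrow{\ \gamma_\infty\ } Hom\big(K_*(A), K_*(A)\big) \longrightarrow 0,
\]
in which $\gamma_\infty$ is the index map recording the action on $K$-theory, the $Hom$-term has graded degree $0$, and the $Ext$-term has graded degree $1$. Restricted to the degree-$0$ part, this exhibits $KK_0(A,A)$ as an extension of the graded endomorphism ring $End\big(K_*(A)\big)$ by the ideal $I := Ker(\gamma_\infty)$, and by the multiplicativity of the UCT \cite{RS} the map $\gamma_\infty$ carries the Kasparov product $\otimes_A$ to composition of endomorphisms. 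The second assertion of the Proposition is then immediate: if $u_* = \pm 1$ then $\gamma_\infty(u) = \gamma_\infty(\pm 1_A)$, so $u - (\pm 1_A)$ lies in $Ker(\gamma_\infty)$, i.e. $u = \pm 1 + k$ with $k \in Ker(\gamma_\infty)$.

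The crux is a structural fact that I would isolate as a lemma: $I = Ker(\gamma_\infty)$ is a square-zero ideal, $I \otimes_A I = 0$. The reason is homological. Multiplicativity of the UCT identifies the induced product on the associated graded with the algebraic composition product, and under this identification a composite of two classes in $Ext^1_{\ZZ}$ is their Yoneda product, which factors through $Ext^2_{\ZZ}\big(K_*(A), K_*(A)\big)$. Since $\ZZ$ has global dimension one, $Ext^2_{\ZZ} = 0$, so every such composite vanishes. (Equivalently, one may invoke the description of $KK$ for UCT algebras as the derived category of $\ZZ/2$-graded countable abelian groups, where the product is derived composition and $I^2 = 0$ precisely because $Ext^{\ge 2}_{\ZZ}$ vanishes.)

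Granting the lemma, I would conclude formally. Since $u_* = \gamma_\infty(u)$ is invertible in $End\big(K_*(A)\big)$ and $\gamma_\infty$ is onto, I lift $u_*^{-1}$ to a class $w \in KK_0(A,A)$ with $\gamma_\infty(w) = u_*^{-1}$. Then $\gamma_\infty(u \otimes_A w) = \gamma_\infty(w \otimes_A u) = \gamma_\infty(1_A)$, so
\[
u \otimes_A w = 1_A + k_1, \qquad w \otimes_A u = 1_A + k_2, \qquad k_1, k_2 \in I.
\]
Because $I^2 = 0$ we have $k_i \otimes_A k_i = 0$, whence each $1_A + k_i$ is invertible with inverse $1_A - k_i$. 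Therefore $w \otimes_A (1_A + k_1)^{-1}$ is a right $\otimes_A$-inverse of $u$ and $(1_A + k_2)^{-1} \otimes_A w$ is a left $\otimes_A$-inverse; since a left inverse and a right inverse in a ring necessarily agree, $u$ is $KK$-invertible.

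The main obstacle is exactly the square-zero lemma. It does not follow formally from the exactness of the UCT sequence; it requires the compatibility of the Kasparov product with the UCT filtration together with the vanishing of $Ext^2$ over the hereditary ring $\ZZ$. Everything else — lifting $u_*^{-1}$ through the surjection $\gamma_\infty$, inverting $1_A + k_i$ by the square-zero property, and reconciling the one-sided inverses — is routine once that multiplicative compatibility is in hand.
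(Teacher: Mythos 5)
Your proof is correct, and it rests on the same two pillars as the paper's: the multiplicativity of the index map $\gamma_\infty$ and the square-zero property $Ker(\gamma_\infty)^2 = 0$ (which the paper simply cites from \cite{RS}, while you sketch the homological reason — composites of $Ext^1$-classes factor through $Ext^2_{\ZZ} = 0$; making that sketch rigorous is exactly the compatibility result in \cite{RS}, as you acknowledge). Where you genuinely diverge is in how the lift is produced. The paper invokes the stronger fact that the UCT sequence \emph{splits as rings}, so an invertible $w \in KK_*(A,A)$ lifting $u_*$ exists outright; writing $u = w + k$ and $x = w^{-1}$, it factors $u = w(1 + xk)$ as a product of two invertibles, with $(1+xk)^{-1} = 1 - xk$ since $(xk)^2 = 0$. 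You instead lift $u_*^{-1}$ to an \emph{arbitrary} preimage $w$ under $\gamma_\infty$, obtain $u \otimes_A w = 1_A + k_1$ and $w \otimes_A u = 1_A + k_2$, invert each $1_A + k_i$ by the square-zero trick, and reconcile the resulting one-sided inverses. Your route buys something real: it needs only exactness and multiplicativity of the UCT sequence, not the existence of a multiplicative splitting, which is a nontrivial additional theorem in \cite{RS}. The paper's route, granted that splitting, is slicker — $u$ is exhibited directly as a product of two invertibles, and the case $u_* = \pm 1$ comes with the explicit normal form $w = \pm 1$ — but your argument proves the same statement from weaker input.
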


\begin{proof}  (Thanks to L. Makar-Limanov for help with this proof.)  The UCT sequence
\[
0 \to Ker(\gamma _\infty ) \longrightarrow  KK_*(A,A) \overset{\gamma _\infty}\longrightarrow 
End(K_*(A)) \to 0
\]
 splits as rings, and $Ker(\gamma _\infty)^2 = 0$, \cite{RS}.  Write
\[
u = w + k
\]
for some $k \in Ker(\gamma _\infty)$ and  $w \in KK_*(A,A)$
an invertible element coming from an invertible in $End(K_*(A))$ via
the splitting.  (If 
$u_* = \pm 1 $ then  $w = \pm 1 $.) 
 Write $x = w^{-1}.$
Then
\[
u =  w + k = (wx)(w + k) = w(1 + xk)
\]
and hence
\[
\big[(1 - xk)x\big] u = (1 - xk)xw(1 + xk) =  1 - (xk)^2 = 1
\]
since $(xk)^2 = 0$.  Thus $u$ is $KK$-invertible.

\end{proof}

 Here are some basic properties of Spanier-Whitehead $K$-duality, cf.\cite{KP}. 

\begin{Thm} \label{T:basic}
Suppose that $A$ and $DA$ are Spanier-Whitehead $K$- dual and both satisfy the UCT. 
Then:
\begin{enumerate}
\item  $D(DA)$ is $KK$-equivalent to $A$. 
\item  $K_*(A)$ is finitely generated. 
\item If $Q$ and $R$  satisfy the UCT then slant pairing 
with the Spanier-Whitehead $K$-duality  classes yield natural inverse isomorphisms
\[
\mu _* : KK_*( Q\otimes DA, R) \overset{\cong}\longrightarrow KK_*(Q, R\otimes A)
\]
\[
\nu _* : KK_*(Q, R\otimes A)    \overset{\cong}\longrightarrow     KK_*( Q\otimes DA, R) 
\]
\item If   $A$  is $KK$-equivalent to $B$, then 
$B$ has a Spanier-Whitehead $K$- dual $DB$ and $DA$ is $KK$-equivalent to $DB$.
\end{enumerate}
\end{Thm}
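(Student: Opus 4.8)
The plan is to treat parts (1), (3) and (4) as formal consequences of the duality axioms together with Theorem~\ref{twoduals}, and to isolate (2) as the single substantive point. The one preliminary I would record first is that Spanier--Whitehead $K$-duals are unique up to $KK$-equivalence. If a separable algebra $C$ admits duals $B$ and $B'$, I would feed the pairs $(C,B)$ and $(C,B')$ into the corollary producing canonical isomorphisms $KK_*(A,B)\cong KK_*(DB,DA)$; this yields $KK_0(C,C)\cong KK_0(B',B)$, and I set $\phi\in KK_0(B',B)$ to be the image of $1_C$. Running the same construction with the roles of $B$ and $B'$ exchanged produces $\psi\in KK_0(B,B')$. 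Because these canonical isomorphisms are compatible with the Kasparov product — this is just the uniqueness of duals in a symmetric monoidal category — one gets $\psi\otimes_{B'}\phi=1_B$ and $\phi\otimes_{B}\psi=1_{B'}$, so $B$ and $B'$ are $KK$-equivalent.

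Granting uniqueness, (1) is immediate: the flip isomorphism $\sigma\colon A\otimes DA\xrightarrow{\ \sim\ }DA\otimes A$ carries $(\mu,\nu)$ to classes $\sigma_*\mu\in KK_*(\CC,DA\otimes A)$ and $\sigma^*\nu\in KK_*(DA\otimes A,\CC)$ verifying the duality axioms, which exhibits $A$ as a dual of $DA$; uniqueness then gives $D(DA)\cong_{KK}A$. For (4), let $\phi\in KK_0(A,B)$ be a $KK$-equivalence with inverse $\phi^{-1}$. I would transport the duality classes to $B$ by setting $\mu_B:=\mu\otimes_{A\otimes DA}(\phi\otimes 1_{DA})$ and $\nu_B:=(\phi^{-1}\otimes 1_{DA})\otimes_{A\otimes DA}\nu$; the relations $\phi^{-1}\otimes_A\phi=1_B$ and $\phi\otimes_B\phi^{-1}=1_A$ cause the inserted equivalences to cancel, reducing both zigzags to those for $(A,DA)$. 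Thus $DA$ is a dual of $B$, and uniqueness again gives $DB\cong_{KK}DA$.

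Part (3) requires essentially no new work: the computation in the proof of Theorem~\ref{twoduals} was already carried out with auxiliary separable algebras $F$ and $G$, showing that the composite
\[
KK_0(DA\otimes F,\CC\otimes G)\longrightarrow KK_0(\CC\otimes F,A\otimes G)\longrightarrow KK_0(DA\otimes F,\CC\otimes G)
\]
is the identity, and symmetrically for its dual. Taking $F=Q$ and $G=R$ and using unitality and commutativity of the external product to identify $DA\otimes F\cong Q\otimes DA$, $\CC\otimes G\cong R$, $\CC\otimes F\cong Q$ and $A\otimes G\cong R\otimes A$, these composites become exactly $\nu_*\circ\mu_*$ and $\mu_*\circ\nu_*$ on the groups in the statement. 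The argument is degreewise, so it holds for $KK_*$. Note that the UCT hypotheses on $Q$ and $R$ are not needed for the bijectivity itself; they are there for the naturality assertion and the later comparison results.

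The real content, and the step I expect to be the main obstacle, is (2). Here I would combine the slant isomorphism $K_*(DA)\cong K^*(A)$ from Theorem~\ref{twoduals} with separability: since $DA$ is separable, $K_*(DA)$ is countable, hence so is $K^*(A)=KK_*(A,\CC)$. Applying the UCT to $A$ expresses $K^*(A)$ as an extension of $\mathrm{Hom}(K_*(A),\ZZ)$ by $\mathrm{Ext}(K_{*-1}(A),\ZZ)$, so both $\mathrm{Hom}(K_*(A),\ZZ)$ and $\mathrm{Ext}(K_*(A),\ZZ)$ are subquotients of a countable group and therefore countable. Finite generation of $K_*(A)$ then reduces to the purely algebraic fact that a countable abelian group $G$ with $\mathrm{Hom}(G,\ZZ)$ and $\mathrm{Ext}(G,\ZZ)$ both countable must be finitely generated. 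I regard this algebraic lemma as the crux: infinite torsion-free rank forces $\mathrm{Hom}(G,\ZZ)$ or $\mathrm{Ext}(G,\ZZ)$ to be uncountable, while divisible-type subgroups such as $\QQ$ (the $K_0$ of the Section~7 example) or $\ZZ[1/p]$ make $\mathrm{Ext}(G,\ZZ)$ uncountable. The case $G=\ZZ[1/p]$ is instructive, since it has finite rational rank and finite reductions $G\otimes\ZZ/q$ for every $q$ yet is not finitely generated; this shows that one cannot argue with $K_*(\,\cdot\,;\QQ)$ and $K_*(\,\cdot\,;\ZZ/p)$ alone and must genuinely exploit the countability of $\mathrm{Ext}(K_*(A),\ZZ)$, which is the delicate point.
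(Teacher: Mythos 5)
Your proposal cannot be compared against the paper's argument line by line, because the paper gives essentially no proof: Theorem \ref{T:basic} is stated with an immediate end-of-proof mark and a ``cf.\ \cite{KP}'', and the finite-generation claim (2) is later outsourced (in the proof that $\mathcal{KK}_F$ is the largest subcategory closed under duality, Section 6) to \cite{KPW}, Section 4.4(d). Measured on its own terms, your treatment of (1), (3) and (4) is the formal argument the paper clearly intends the reader to extract from Theorem \ref{twoduals}: symmetry of the definition gives that $A$ is \emph{a} dual of $DA$, uniqueness of duals up to $KK$-equivalence upgrades this to (1) and finishes (4) after transporting the classes along $\phi$, and (3) is literally the $F,G$-version of the composite (\ref{thing}) with $F=Q$, $G=R$. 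Two small points should be made explicit: the ``compatibility with the Kasparov product'' in your uniqueness argument is not a formality one can simply invoke -- it is the same associativity computation as in the proof of Theorem \ref{twoduals}, applied to $\psi\otimes_{B'}\phi$ -- and in (3) what is really needed of $Q$ and $R$ is separability (so that the products exist), which in this paper is implicit in the UCT hypothesis; your remark that the UCT is otherwise not used for bijectivity is correct.

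For (2), your reduction is correct and is in fact the natural generalization of the paper's own Section 7 argument (Proposition \ref{7.1}, where countability of $K_1(DA)$ collides with $Ext(\QQ,\ZZ)\cong\RR$). The one genuine soft spot is the algebraic lemma you rightly call the crux: that a countable abelian group $G$ with $Hom(G,\ZZ)$ and $Ext(G,\ZZ)$ both countable must be finitely generated. This is true, but it is a theorem, not a verification, and as written you assert it with heuristics. It needs three inputs: (i) if the torsion subgroup $T$ is infinite then $Ext(T,\ZZ)\cong Hom(T,\QQ/\ZZ)$ is uncountable (the dual of an infinite discrete torsion group is an infinite compact group); (ii) a countable torsion-free group that is not free has uncountable $Ext(-,\ZZ)$ -- this is the deep ingredient, a ZFC strengthening of Stein's theorem that countable Whitehead groups are free, and it is what disposes of $\QQ$ and $\ZZ[1/p]$; (iii) a free group of infinite rank has $Hom(-,\ZZ)\cong\ZZ^{\omega}$, uncountable. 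Given (i)--(iii), one splits off the torsion and concludes. So your proof stands, but (ii) must be cited (the paper's references \cite{Wiegold} and \cite{Pext} are the right neighborhood) rather than gestured at; this is the one place a referee would push back, and it is also the real content behind the paper's reliance on \cite{KPW}.
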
  \qed

\begin{Thm} Suppose that $A$, $B$, and $A\otimes B$ each have Spanier-Whitehead $K$-duals.   Then there is a natural $KK$-equivalence 
\[
D(A\otimes B) \simeq  DA \otimes DB 
\]
\end{Thm}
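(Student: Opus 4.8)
The plan is to show directly that $A\otimes B$ and $DA\otimes DB$ constitute a Spanier--Whitehead $K$-dual pair, and then to conclude by the uniqueness of duals up to $KK$-equivalence. Since $A\otimes B$ is assumed to have a dual $D(A\otimes B)$, once $DA\otimes DB$ is shown to be a dual as well the two must be $KK$-equivalent.

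First I would assemble the candidate duality classes as external products of the given ones. Writing $\mu_A\in KK_*(\CC, A\otimes DA)$ and $\nu_A\in KK_*(A\otimes DA,\CC)$, and similarly $\mu_B,\nu_B$ for $B$, the external products over $\CC$ are
\[
\mu_A\otimes_\CC\mu_B\in KK_*(\CC,\,A\otimes DA\otimes B\otimes DB),\qquad \nu_A\otimes_\CC\nu_B\in KK_*(A\otimes DA\otimes B\otimes DB,\,\CC),
\]
in which the tensor factors appear in the order $A,DA,B,DB$. Let $\tau$ be the flip isomorphism that interchanges the two inner factors,
\[
\tau\colon A\otimes DA\otimes B\otimes DB\ \xrightarrow{\ \cong\ }\ (A\otimes B)\otimes(DA\otimes DB),
\]
and define $\mu\in KK_*(\CC,(A\otimes B)\otimes(DA\otimes DB))$ and $\nu\in KK_*((A\otimes B)\otimes(DA\otimes DB),\CC)$ by reindexing $\mu_A\otimes_\CC\mu_B$ and $\nu_A\otimes_\CC\nu_B$ along $\tau$.

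The heart of the proof is the verification of the two duality relations
\[
\mu\otimes_{A\otimes B}\nu=\pm 1_{DA\otimes DB},\qquad \mu\otimes_{DA\otimes DB}\nu=\pm 1_{A\otimes B}.
\]
For the first I would invoke the fundamental compatibility between the external product $\otimes_\CC$ and the internal product, which says that contracting the $A\otimes B$ factor of a product of two external products splits as the external product of the contraction over $A$ with the contraction over $B$. Once the flips $\tau$ and $\tau^{-1}$ cancel, this reduces the left-hand side to
\[
(\mu_A\otimes_A\nu_A)\otimes_\CC(\mu_B\otimes_B\nu_B)=(\pm 1_{DA})\otimes_\CC(\pm 1_{DB})=\pm 1_{DA\otimes DB},
\]
using the duality relations for $A$ and for $B$ supplied by hypothesis; the second relation is entirely symmetric. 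I expect the main obstacle to be the careful bookkeeping of the flip $\tau$ and the associativity and commutativity rearrangements of the four-fold tensor product---the same type of manipulation performed in the proof of Theorem~\ref{twoduals}, now carrying $DA\otimes DB$ as spectator factors---along with tracking parities and the resulting signs in the odd case, which I would suppress by working in the even case as elsewhere in the paper.

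Having exhibited $DA\otimes DB$ as a Spanier--Whitehead $K$-dual of $A\otimes B$, I would finish by uniqueness. Applying the Corollary above that produces canonical isomorphisms $KK_*(A,B)\cong KK_*(DB,DA)$ to the two dual pairs both having underlying algebra $A\otimes B$ shows that any two duals of $A\otimes B$ are canonically $KK$-equivalent; in particular the given $D(A\otimes B)$ is $KK$-equivalent to $DA\otimes DB$. The equivalence is natural because it is assembled entirely from the canonical duality classes.
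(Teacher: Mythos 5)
Your proposal is correct, but it takes a genuinely different route from the paper's proof. The paper never constructs duality classes for the pair $(A\otimes B,\ DA\otimes DB)$: instead it transports the given class $\nu_{A\otimes B}\in KK_0(D(A\otimes B)\otimes A\otimes B,\CC)$ through the slant-product isomorphisms of equation (\ref{thing}) to obtain a class $\Psi\in KK_0(D(A\otimes B),DA\otimes DB)$, produces a class $\Phi\in KK_0(DA\otimes DB,D(A\otimes B))$ the same way, and checks $\Phi=\Psi^{-1}$ by a computation of the type in Theorem~\ref{twoduals}. You instead exhibit $DA\otimes DB$ directly as a Spanier--Whitehead $K$-dual of $A\otimes B$, with explicit duality classes given by the flipped external products $\mu_A\otimes_\CC\mu_B$ and $\nu_A\otimes_\CC\nu_B$; there the work is carried by the interchange law between external and internal Kasparov products, which in the even case is exactly the associativity-and-commutativity bookkeeping of Theorem~\ref{twoduals}, so that step is sound. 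Your route buys two things the paper's does not: it shows the hypothesis that $A\otimes B$ has a dual is redundant for existence (duals of $A$ and $B$ already produce one), and it leaves concrete duality classes on $DA\otimes DB$ in hand for later use. The cost is the appeal to uniqueness of duals, which the paper never states as such. Be aware that the Corollary you cite only gives isomorphisms of $KK$-groups; to convert it into a $KK$-equivalence you must still check that the images of $1_{A\otimes B}$ under the two canonical isomorphisms $KK_*(A\otimes B,A\otimes B)\cong KK_*(DA\otimes DB,D(A\otimes B))$ and $KK_*(A\otimes B,A\otimes B)\cong KK_*(D(A\otimes B),DA\otimes DB)$ are mutually inverse Kasparov products --- the triangle-identity computation. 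This is precisely the verification the paper itself glosses with ``a proof similar to the proof of Theorem~\ref{twoduals},'' so your write-up is at the paper's own level of rigor, but you should recognize that this step is a computation, not a formality.
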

  \begin{proof} 
Under   natural duality class maps  (which are isomorphisms by (\ref{thing}))
\vglue .1in
\[
  KK_0(D(A\otimes B) \otimes A \otimes B , \CC)  \cong  
KK_0(D(A\otimes B) \otimes A  , DB) \cong
\]
\[
\cong    KK_0(D(A\otimes B),     DA \otimes DB)
\]
\vglue .1in
\flushleft the class
$\nu_{A\otimes B}   \in KK_0(D(A\otimes B) \otimes A \otimes B , \CC) $ is sent to a class which we designate
\[
\Psi \in   KK_0(D(A\otimes B),     DA \otimes DB))  .
\]
We can similarly produce a class   $\Phi \in KK_0(DA \otimes DB, D(A\otimes B))$ simply by using (\ref{thing}) and its dual a few times.  Then a proof similar 
to the proof of Theorem 2.2          shows that $\Phi = \Psi^{-1}$. 
\end{proof}

\section{Fitting Classical Spanier-Whitehead duality into the Spanier-Whitehead $K$-duality framework}

  Classical Spanier-Whitehead duality actually lives in the world of stable homotopy 
theory. Thus its beautiful properties need some modification before  the relationship 
with Spanier-Whitehead $K$-duality emerges.

We borrow the following exposition from Becker-Gottlieb \cite{BG}, \S
4.  Given a polyhedron $X$ in $S^{n+1}$, Spanier-Whitehead define an
$n\mbox{-} dual$, $D_nX$, to be a polyhedron contained
in $S^{n+1} - X$ which has the property that some suspension of $D_nX$
is a deformation retract of the corresponding suspension of $S^{n+1} -
X$.  

 Now suppose that $X^* \subset (S^{n+1} - X)$ is a polyhedron which is actually a deformation retract,
 hence an n\hyp dual. 
Following Spanier, remove a point of $S^{n+1} $ that is neither in $X$ nor in $X^*$. Then one can regard both spaces as embedded in $\RR ^{n+1}$.   Define 
\[
\mu^X : X \times X^* \longrightarrow S^{n}
\]
by
\[
\mu ^X (x, x^*) = (x - x^*)/|x - x^*|.
\]
The restriction of $\mu ^X$ to $X \vee X^*$ is null-homotopic and so one obtains a map 
\[
\mu^X : X \wedge X^* \longrightarrow S^{n}.
\]
Slant product with this class induces an isomorphism  
\[
\mu ^X/(-)^*  :  H_q(X) \overset\cong\longrightarrow  H^{n-q}(X^*).
\]
Spanier, following work of Wall, Freyd, and Husemoller (see \cite{BG} for details and references)  shows that the whole 
duality theory can be expressed in terms of the duality map $\mu $.   
The space $X^*$ depends upon the choice of $n$, the choice of embedding, and the choice of 
the deformation retraction.
  It turns out, though,  that for $n$ large the stable homotopy type
  of $X^*$ is independent, up to suspension, of the choices of the embedding and of the deformation retraction.  The resulting (stable) space $DX$,
defined for any finite complex $X$, is called the {\emph{Spanier-Whitehead 
dual}} of $X$, in honor of the people who discovered it and determined its primary properties \cite{SW1}, \cite{SW2}. 
Taking $n$ large enough to be in the stable range  we have a duality 
pairing as
\[
\mu _{C(X)} : C(S^{2n}) \longrightarrow C(X) \otimes C(DX).
\]
We use even-dimensional spheres to control the parity of the degree of
the duality class.  The associated  candidate for a duality class 
\[
\nu _{C(X)} : C(X) \otimes C(DX) \longrightarrow C(S^{2k})
\]
may be obtained by taking the stable dual $\nu ^X$ of the map $ \mu ^X : X \times DX \longrightarrow S^{2k}$.
 
\begin{Thm} \label{oldKS}
\label{classical=KK}
Suppose that $X$ is a finite CW complex and that $DX$ is a Spanier-Whitehead
dual for $X$.  Then $C(X)$ and $C(DX)$ are  Spanier-Whitehead $K$- dual. Indeed,
\[
D(C(X)) \cong C(DX) .
\]
 
\end{Thm}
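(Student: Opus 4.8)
The plan is to verify the two defining product identities for Spanier-Whitehead $K$-duality directly from the classical duality pairing, using the dictionary between K-homology, K-cohomology, and $KK$-theory recorded in equation (\ref{eq:4}), together with the spectral-sequence comparison theorem alluded to in the statement. First I would set $DA = C(DX)$ and take as my candidate classes the Kasparov classes $\mu_{C(X)} \in KK_0(\CC, C(X)\otimes C(DX))$ and $\nu_{C(X)} \in KK_0(C(X)\otimes C(DX), \CC)$ arising from the even-dimensional duality maps $\mu^X$ and its stable dual $\nu^X$ described just above the theorem. The goal is then to establish that $\mu_{C(X)} \otimes_{C(X)} \nu_{C(X)} = \pm 1_{C(DX)}$ and $\mu_{C(X)} \otimes_{C(DX)} \nu_{C(X)} = \pm 1_{C(X)}$, at which point the Definition applies verbatim and gives $D(C(X)) \cong C(DX)$.

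By Theorem \ref{twoduals}(2), I need not verify the product identities themselves; it suffices to show that the four associated slant-product maps are isomorphisms and that the round-trip composites on K-theory are $\pm 1$. This is the point where classical Spanier-Whitehead duality enters. On the topological side, slant product with the classical duality class $\mu^X$ induces the Alexander-type isomorphism $H_q(X) \cong H^{n-q}(X^*)$ on ordinary (co)homology, for every coefficient system, and hence an isomorphism on K-theory after passing to the Atiyah-Hirzebruch spectral sequence. So the second step is to interpret the $KK$-theoretic slant products $(-)\otimes_{C(X)}\nu_{C(X)} : K_*(C(X)) \to K^*(C(DX))$ as the K-theoretic manifestation of the classical slant product, translating through the identifications $K_*(C(X)) \cong K^*(X)$, $K^*(C(DX)) \cong K_*(DX)$.

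The main obstacle — and the reason the theorem genuinely requires proof rather than following formally from the axioms — is precisely the gap flagged in the Introduction: classical duality is an isomorphism of \emph{ordinary} (co)homology, but K-theory detects torsion differently, so one cannot simply read off that the induced map on K-theory is an isomorphism. The hard part will be to show that the duality pairing is compatible with the Atiyah-Hirzebruch spectral sequences for $K_*(X)$ and $K^*(DX)$, so that a map inducing an isomorphism on the $E_2$-pages (ordinary (co)homology) induces an isomorphism on the abutments. This is the spectral sequence comparison theorem referred to in the Introduction as one the authors established in earlier work; I would invoke it here. Concretely, the duality map induces a morphism of the two spectral sequences which on $E_2$ is the classical Alexander/Spanier-Whitehead cohomology isomorphism, and comparison then forces an isomorphism on the abutment $K_*$.

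Once the slant product is identified as such a comparison isomorphism, the final step is routine: the round-trip composite $K_*(C(X)) \to K^*(C(DX)) \to K_*(C(X))$ is induced by $\mu_{C(X)}\otimes_{C(DX)}\nu_{C(X)}$ on the nose, and it equals the identity (up to the sign dictated by the parity of $n$) because the corresponding statement — that $\mu^X$ and $\nu^X$ are mutually dual pairings realizing $DDX \simeq X$ — is exactly the self-duality built into the classical theory. Invoking Theorem \ref{twoduals}(2) then closes the argument and delivers $D(C(X)) \cong C(DX)$.
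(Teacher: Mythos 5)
Your proposal is correct and takes essentially the same route as the paper: the paper's own proof is a one-line appeal to the Kahn--Kaminker--Schochet identification of $K_*(X)$ with $K^*(DX)$ in \cite{KKS}, which is established there by precisely the spectral-sequence comparison argument you invoke, the point being (as you say) that a classical duality isomorphism on ordinary (co)homology does not formally yield one on $K$-theory. Your additional scaffolding --- taking $\mu_{C(X)}$, $\nu_{C(X)}$ as the candidate classes and closing the argument via Theorem \ref{twoduals}(2) --- is exactly the reduction left implicit in the paper's citation.
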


\begin{proof}  
This result is non-trivial, since an algebraic  isomorphism in homology does not imply 
an isomorphism in $K$-theory. However, this result was established
previously with D.S. Kahn in \cite{KKS}. It was shown there that it
follows from the identification of $K_*(X) $ with $K^*(DX)$ as discussed in the introduction.

\end{proof}

\begin{Exa}  When defining duality, one might be tempted to always require 
that the classes $\mu $ and $\nu $ actually be $KK$-inverses of one another. 
Here is an example to show that  this is a bad idea.

 Suppose that $X$ is a mod $p$ Moore space. That is, its reduced homology is zero except
in degree one, and $H_1(X;\ZZ ) \cong \ZZ/p$.   This space is
self-dual in the classical Spanier-Whitehead sense.  In fact, the
reduced cohomology of $X$ is zero except in dimension two, and
$H_1(X,\ZZ) \cong H^2(X, \ZZ)$.

There are stable duality maps $S^r \to X \wedge X \to S^t $ such that slant product with these
maps yields isomorphisms in reduced homology and cohomology
\[
H_*(X) \cong H^*(X) \qquad\qquad   H^*(X) \cong H_*(X)
 \]
with degree shifts.   However, the composite 
\[
H_*(X\wedge X ) \longrightarrow H_*(S^*) \longrightarrow H_*(X \wedge X)
\]
cannot possibly be the identity map, since $H_*(X \wedge X)$ has torsion and $H_*(S^*)$ is torsionfree.

Write $A = C_0(X - pt)$ so that 
\[
K_0 (A) = \ZZ/p\qquad\qquad K_1(A) = 0
\]
Then the K\"unneth Theorem \cite{Top2} implies that there are isomorphisms
\[
K_0(A)\otimes K_0(A) \overset{\cong}\longrightarrow K_0(A \otimes A)
\]
and
\[
K_1(A \otimes A) \overset{\cong}\longrightarrow Tor(K_0(A),K_0(A))
\]
so that
\[
K_0(A \otimes A) = \ZZ/p  \qquad\qquad  K_1(A\otimes A) = \ZZ/p
\]
and the UCT \cite{RS} implies that 
\[
KK_0(A\otimes A, \CC ) = \ZZ/p  \qquad   KK_0(\CC, A\otimes A ) = \ZZ/p
\]
The resulting pairing 
\[
KK_0(\CC, A\otimes A)  \times  KK_0(A\otimes A, \CC ) \xrightarrow{\otimes_{A\otimes A}} KK_0(\CC, \CC )
\]
is evidently trivial since $KK_0(A\otimes A, \CC )$ and $KK_0(\CC,
A\otimes A )$ are both torsion groups, while $KK_0(\CC, \CC) \cong \ZZ$. 
Thus the classical Spanier-Whitehead duality classes \cite{SW1}, \cite{SW2}
give us $K$-duality classes but 
 do NOT give us invertible $KK$-classes.

\end{Exa}


\section{Examples of noncommutative duality}

The results of the previous section seem to suggest, at least when
K-theory is finitely generated, that Spanier-Whitehead K-duality is a commutative
phenomenon.  However,  many of the algebras providing natural examples of duality 
owe this property to underlying  geometry and dynamics and are very
far from being commutative.  Indeed, many are simple algebras.  We will survey some of these in this section.

The importance of finite complexes in algebraic topology is the fact that they are constructed systematically out of basic
building blocks which are determined by their homology, e.g. spheres.  This
information can be assembled to compute 
homology and cohomology for general finite complexes.  

In the noncommutative case one is often confronted with simple algebras,
i.e. ones with no nontrivial ideals.  It is natural to look for
building blocks in $\mathcal{KK}_F$ which are of this type and, because of the
results above, one may choose to consider simple $C^*$-algebras which have
Spanier-Whitehead K-duals.  We will discuss two examples of this phenomenon---the first
coming from the study of hyperbolic dynamics and the second from the
study of hyperbolic groups.  We will then briefly consider additional
instances of noncommutative duality.

\subsection{Hyperbolic dynamics}

We refer to \cite{KPW} for precise statements and details.  A Smale
space is a compact metric space, $X$, along with an expansive
homeomorphism, $\phi$, which
has similar properties to that of an Anosov diffeomeorphism of a torus.  By
this we mean that there are two equivalence relations defined on $X$
called stable and unstable equivalence.  Each defines a locally
compact groupoid with Haar system and hence one may associate
$C^*$-algebras to them.  Let us denote them by $\mathcal{S}$ and $\mathcal{U}$.  Both
can be represented on $L^2(X)$ and the groupoids can be viewed as
``transverse'' because each stable equivalence class meets an
unstable class in a countable set.  This implies  that the product of an element of $\mathcal{S}$ and an
element of $\mathcal{U}$ is a compact operator. 

Using the automorphisms
induced by $\phi$ on $\mathcal{S}$ and $\mathcal{U}$ one constructs the crossed product
algebras, $\mathcal{R}^u = \mathcal{U} \rtimes_{\phi} \ZZ$ and $\mathcal{R}^s = \mathcal{S} \rtimes_{\phi} \ZZ$, called Ruelle
algebras.  They can be shown to be Spanier-Whitehead K-dual, \cite{KPW}.  It is interesting to
consider the construction of the duality classes. One  first obtains a
projection in $\mathcal{S} \otimes \mathcal{U}$ and from that a unitary in $\mathcal{R}^u \otimes \mathcal{R}^s$
which yields a class 
$\delta \in KK^1(\CC, \mathcal{R}^u \otimes \mathcal{R}^s)$. Then, strongly using the hyperbolic properties of
the dynamics, one constructs an extension which yields an element 
$\Delta \in KK^1(\mathcal{R}^u \otimes \mathcal{R}^s,\CC)$.  These classes are the required
duality classes.

An example of a Smale space is a subshift of finite type associated to
a matrix $A$.  Associated to this data are the Cuntz-Krieger algebras
$O_A$ and $O_{A^T}$.  It turns out that the Ruelle algebras
$\mathcal{R}^u$ and $\mathcal{R}^s$ are isomorphic to $O_A \otimes
\mathcal{K}$ and $O_{A^T} \otimes \mathcal{K}$, and so the Cuntz-Krieger algebras $O_A$ and $O_{A^T}$ are (stably) Spanier-Whitehead $K$-dual. 

\subsection{Baum-Connes conjecture}

Let $\Gamma$ be a torsion free and non-elementary Gromov hyperbolic group.  It has been shown by de la
Harpe \cite{de la Harpe} that $C^*_r(\Gamma)$ is a simple $C^*$-algebra. We will assume that there
is a model for the classifying space  $B\Gamma$ which is a closed smooth manifold.  The
Baum-Connes conjecture, which is known to hold in this case \cite{Mineyev-Yu},   asserts that there is an isomorphism,
\begin{equation}
  \label{eq:1}
  \mu: KK(C(B\Gamma), \CC) \to KK(\CC, C^*_r(\Gamma)).
\end{equation}
In the present setting the map $\mu$ can be obtained via Kasparov product
with the class in $\Psi_\Gamma \in KK(\CC,C^*_r(\Gamma) \otimes C(B\Gamma) )$ determined by the
Mishchenko line bundle,
\begin{equation}
  \label{eq:2}
  C^*_r(\Gamma) \to E\Gamma \times_\Gamma C^*_r(\Gamma) \to B\Gamma.
\end{equation}
This is the first duality class $\mu$.  As in the dynamical situation above,
very little special structure is needed to define it.  However, as
above, the other duality class $\nu$ makes use of the 
hyperbolic structure of the group.  That class is the dual-Dirac class
\begin{equation}
  \label{eq:3}
  \kappa_\Gamma \in KK(C^*_r(\Gamma) \otimes C(B\Gamma), \CC )
\end{equation}
introduced by Kasparov.  Thus, in this context, the Baum-Connes
conjecture is the same as $C(B\Gamma)$ being Spanier-Whitehead K-dual
to $C_r^*(\Gamma)$.

There is a possible connection between these examples.  The hyperbolic group
$\Gamma$ acts amenably on its Gromov boundary, $\partial \Gamma$.  If
we choose a quasi-invariant measure on $\Gamma$ then, by a result of
Connes, Feldman, Weiss \cite{Connes-Feldman-Weiss} that action is
orbit equivalent to a $\ZZ$ action.  Although this result is in a measure
theoretic setting,  in certain cases, such as a Fuchsian group of the
first kind 
acting on $S^1$ \cite{Bowen-Series}, the transformation generating
the $\ZZ$ action can be taken to be a piecewise homeomorphism which 
can be studied using hyperbolic dynamics.  Indeed, both of the
$C^*$-algebras associated to this hyperbolic dynamical system in
the first example are isomorphic to the crossed product, $C(\partial
\Gamma) \rtimes \Gamma$ \cite{Spielberg}.  This has been generalized to $SL(2,\ZZ)$ acting on
$S^1$ \cite{Laca-Spielberg}  but in this case the isomorphism
between the dynamical algebras and the crossed products is obtained by
computing K-theory and applying the classification result of Kirchberg
and Phillips.  Duality in general for hyperbolic groups acting on
their boundary has been studied in detail by Emerson \cite{E1}.  This suggests the question of whether the proof of the
Connes-Feldman-Weiss theorem, in the case of a hyperbolic group acting
on its boundary, can be refined so that one obtains a hyperbolic
dynamical system for which the associated Ruelle algebras are
isomorphic to the crossed product.  

A general theory of duality on the level of groupoids with hyperbolic
structure has been developed by Nekrashevych \cite{Nekrashevych}.
There is a setting in which analogs of the stable and unstable
groupoids can be defined, but as of yet there is no general K-theory
result involving the associated $C^*$-algebras.  It would be
interesting to show that they are Spanier-Whitehead K-dual.

\subsection{Mukai transform}
The actual Mukai transform is studied in the context of algebraic
geometry and relates the derived category of coherent sheaves on an
abelian variety to that of its dual variety \cite{Mukai}.
However, the formula for the transform can be identified with the map
in the Baum-Connes example above, and hence can be viewed as an instance of
Spanier-Whitehead K-duality.  The fact that it yields an isomorphism
was first proved by Lusztig \cite{Lusztig}, which was one of the
origins of the K-theoretic approach to such problems. We mention this
here because it indicates the sense that this type of duality is like a
``transform''.

Let $\Lambda \subseteq
\RR^n$ be a lattice and $T^n = \RR^n / \Lambda$ the associated torus.
Let $\hat T^n = \hat \RR^n / \hat \Lambda$ be the dual torus, where
$\hat \Lambda = \{\alpha \in \hat \RR^n | \alpha(x) \in \ZZ, \ \text{for}\
x \in
\Lambda \}$. The Poincar\'e line bundle, $\mathcal{P}^{\Lambda}$, over $T^n \times \hat T^n$ is
determined by the property that $\mathcal{P}^{\Lambda} |_{T^n \times \alpha} = L_\alpha$,
where $L_\alpha =  L_{\alpha_{1}} \otimes \ldots \otimes
L_{\alpha_{n}}  $.  The Mukai transform is obtained as

\begin{equation}
 \begin{tikzcd}
  K^*(T^n) \arrow{r}{p_{T^n}^*} &K^*(T^n \times \hat T^n) \arrow{r}{\mathcal{P}^{\Lambda} \otimes} & K^*(T^n \times \hat
  T^n) \arrow{r}{(p_{\hat T^n})_{!} } & K^*(\hat T^n)
 \end{tikzcd}
\end{equation}

We also have the Mishchenko line bundle, $C^*(\Lambda) \to \Psi^{\Lambda} \to
T^n$.  There is a map induced by the Gelfand transform 
\begin{equation}
 1 \otimes G : KK(\CC, C(T^n) \otimes C^*(\Lambda)) \to KK(\CC, C(T^n) \otimes
 C(\hat T^n) ), 
\end{equation}
with the property that $1 \otimes G ([\Psi^{\Lambda}]) =
[\mathcal{P}^{\Lambda}]$.  The diagram below expresses the relation between the
Baum-Connes map and the Mukai transform in this setting. We assume $n$
is even to simplify the diagram.


 \begin{equation}
\begin{tikzcd}[column sep=small, row sep=huge]
& KK(\CC, C(T^n) \otimes C(\hat T^n)) \arrow{rd}{(p_{\hat T^n})_{!} \circ
(\mathcal{P}^{\Lambda} \otimes )} & \\
KK(\CC, C(T^n)) \arrow{d}{PD} \arrow{ru}{p_{T^n}^*} \arrow{rr}{\text{Mukai transform}} & & KK(\CC, C(\hat T^n) )\\
KK(C(T^n), \CC) \arrow{rr}[swap]{\text{Baum-Connes map}}{\Psi^{\Lambda} \otimes_{C(T^n)}} & & KK(\CC, C^*(\Lambda)) \arrow{u}{G}
\end{tikzcd} 
\end{equation}

\section{Poincar\'e duality}
We will assume in this section that our algebras are unital and are in
$\mathcal{KK}_F$.  We also avoid formulating statements for odd
Poincar\'e duality.

In \cite{Connes} Connes (see also \cite{Connes-Skandalis, Kasparov}) discussed a notion of Poincar\'e duality for
a \ca.   It 
states that an algebra $A$ satisfies Poincar\'e
duality if it is Spanier-Whitehead K-dual to its opposite algebra,
$A^{op}$. 
 This yields a class
$\partial \in KK(A, \CC)$ by setting $\partial = 1
\otimes_{A^{op}} \mu $, where $1 \in KK(\CC, A^{op})$ and $\mu \in
KK(A^{op} \otimes A, \CC)$ is the duality class.  In the commutative
case $\partial$ would correspond to a K-theory fundamental class and
taking cap product with it would yield an isomorphism $$\cap \partial  :
KK(\CC, A) \to KK(A, \CC).$$
Since such an algebra $A$ is Morita equivalent to its opposite, we may
just as well formulate Poincar\'e duality in terms of $A$ alone.

If $A$ is not commutative there is, in general, no
cap product in K-theory.  We will present here a slightly weaker condition which will allow a 
version of a cap product to be defined so that one could obtain a Poincar\'e duality isomorphism of
the usual form. Note that we are using the convention that $1 \in KK(\CC,A)$ is the class of the
identity element in $A$, while $1_A \in KK(A,A)$ is the class of the
identity homomorphism.

 Let $$\tau^A : KK(B,D) \to KK(B \otimes A, D \otimes A)$$ and
$$\tau_A : KK(B,D) \to KK(A \otimes B, A \otimes D)$$ denote the
standard homomorphisms.
\begin{Def}
A $C^*$-algebra $A$ is {\emph{K\mbox{-}commutative}} if there is a class \linebreak  $m \in KK(A\otimes A, A)$ with the
 property that one has
\begin{equation} 
\tau_A \otimes_{A\otimes A} m =1_A, \ \  
\tau^A \otimes_{A\otimes A} m = 1_A.
\end{equation}
\end{Def}

Recall that, when $A$ is commutative, $m$ plays the role of the class
determined by the diagonal map and it also agrees with the class determined by the multiplication in
$A$.  We will call $m$ a $K\mbox{-}commutative\  product$.  If such a class
exists one defines the usual cup and cap products via the following diagrams.

Cup product:

\begin{equation}
  \label{eq:1}
  \begin{tikzcd}
KK(\CC,A) \times KK(\CC,A) \arrow{r}{\cup} \arrow{d}{\otimes} & KK(\CC,A)\\
KK(\CC,A \otimes A) \arrow {ur}[swap]{( - )\otimes_{A \otimes A} m}
  \end{tikzcd}
\end{equation}

Cap product:

\begin{equation}
  \label{eq:3}
  \begin{tikzcd}
KK(\CC,A) \times KK(A,\CC) \arrow{r}{\cap} \arrow{d}[swap]{(id, m
  \otimes_{A}(- ))} & KK(A,\CC) \\
KK(\CC,A) \times KK(A \otimes A, \CC) \arrow{ur}[swap]{\otimes_A}
 \end{tikzcd}
\end{equation}

\begin{Def}
Let $A$ be an algebra with a K-commutative product.  A ${fundamental\  class}$ is an 
element $\partial \in KK(A, \CC)$  such that $$\cap\ \partial: KK(\CC, A) \to KK(A, \CC)$$ is
an isomorphism.
\end{Def}
\begin{Pro}

Let $A$ be a K-commutative algebra satisfying Poincar\'e duality with
duality classes $\nu$ and $\mu$.  Then for any $u \in
KK(\CC, A)$ which is invertible with respect to cup product, the class $u \otimes_A \nu $ is a fundamental class.

\end{Pro}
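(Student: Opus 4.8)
The plan is to exhibit the cap-product homomorphism $\cap\,\partial\colon KK(\CC,A)\to KK(A,\CC)$, $w\mapsto w\cap\partial$, as a composite of two isomorphisms, where $\partial=u\otimes_A\nu\in KK(A,\CC)$ is the candidate fundamental class. The two factors will be cup multiplication by $u$ (an automorphism of $KK(\CC,A)$) followed by the slant isomorphism supplied by Poincar\'e duality.

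First I would record the two pieces that are already isomorphisms. Since $A$ satisfies Poincar\'e duality with classes $\mu,\nu$, Theorem~\ref{twoduals} (applied with $DA=A$, via the Morita identification with $A^{op}$) shows that the slant product $(-)\otimes_A\nu\colon KK(\CC,A)\to KK(A,\CC)$ is an isomorphism, with inverse induced by $\mu\otimes_A(-)$. Next, because $u$ is invertible for the cup product there is a class $u^{-1}\in KK(\CC,A)$ with $u\cup u^{-1}=1=u^{-1}\cup u$, where $1\in KK(\CC,A)$ is the class of the unit of $A$; using associativity and unitality of the cup product, which hold because the $K$-commutative product $m$ is associative, commutative and unital, the additive map $(-)\cup u\colon KK(\CC,A)\to KK(\CC,A)$ is an isomorphism with inverse $(-)\cup u^{-1}$.

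The heart of the argument is the identity
\[
w\cap\partial \;=\; (w\cup u)\otimes_A\nu \qquad(w\in KK(\CC,A)).
\]
To prove it I would unwind both sides from their defining diagrams. By definition $w\cap\partial=w\otimes_A\bigl(m\otimes_A\partial\bigr)$ with $\partial=u\otimes_A\nu$, so the left side is $w\otimes_A\bigl(m\otimes_A(u\otimes_A\nu)\bigr)$; on the right, $w\cup u=(w\otimes u)\otimes_{A\otimes A}m$, so $(w\cup u)\otimes_A\nu=\bigl((w\otimes u)\otimes_{A\otimes A}m\bigr)\otimes_A\nu$. Repeated use of the associativity of the Kasparov product reduces each expression to a single product of $w$, $u$, $m$ and $\nu$; informally, the left side computes to $\nu\bigl(u,\,m(w,b)\bigr)$ and the right side to $\nu\bigl(m(w,u),\,a\bigr)$, and the two reductions are matched using the commutativity of $m$ together with the compatibility of the pairing $\nu$ with $m$, namely the Frobenius associativity $\nu\bigl(m(x,y),z\bigr)=\nu\bigl(x,m(y,z)\bigr)$ (in the commutative model $\nu=\partial_0\circ m$ for the canonical fundamental class $\partial_0$, and this relation is automatic). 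Granting the identity, the conclusion is immediate: $\cap\,\partial=\bigl((-)\otimes_A\nu\bigr)\circ\bigl((-)\cup u\bigr)$ is a composite of isomorphisms, hence itself an isomorphism, so $u\otimes_A\nu$ is a fundamental class.

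I expect the main obstacle to be precisely the displayed identity. The associativity reshuffles are bookkeeping, but the substantive point is verifying that the Poincar\'e pairing $\nu$ is associative with respect to the $K$-commutative product $m$ up to the symmetry of $A\otimes A$; this is where the hypotheses on $m$ beyond bare unitality are genuinely used, and it is the step I would treat most carefully.
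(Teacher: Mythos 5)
Your proposal is correct and follows essentially the same route as the paper's own proof: both rest on the key identity $x \cap (u \otimes_A \nu) = (x \cup u) \otimes_A \nu$ and then conclude by exhibiting $(-)\cap(u\otimes_A\nu)$ as the composite of the two isomorphisms $(-)\cup u$ and $(-)\otimes_A \nu$. The only difference is one of emphasis: the compatibility of $\nu$ with $m$ that you flag as the delicate ``Frobenius'' step is precisely what the paper compresses into ``unraveling the definitions and using properties of the Kasparov product as in Theorem 2.2,'' so your caution at that step is warranted rather than superfluous.
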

\begin{proof}
We must show that if $x \in KK(\CC,A)$ has an inverse with respect to
cup product then the map $x \longmapsto x
\cap (u \otimes_A \nu) $ is an isomorphism.  Unraveling the definitions and
using properties of the Kasparov product as in Theorem 2.2, one obtains the
formula 
\begin{align*}
x \cap  (u \otimes_A \nu) &= \tau^A(x) \otimes_{A \otimes A} (m \otimes_{A} (\tau^A(u) 
\otimes_{A \otimes A} \nu))\\
&= (\tau^A(x) \otimes_{A \otimes A} (m \otimes_{A} (\tau^A(u))) 
\otimes_{A \otimes A} \nu))\\
&= (x \cup u) \otimes_A \nu. 
\end{align*}
Since $x \longmapsto x \cup u$ and $x \longmapsto x
\otimes_A \nu$ are isomorphisms the result follows.

\end{proof}

Additional aspects of this topic, such as the study of
noncommutative algebras which are K-commutative, will be developed in
further work.

\section{Existence of Spanier-Whitehead $K$-Duals}

In this section we show that if $A$ is a separable $C^*$-algebra satisfying the UCT and 
if $K_*(A)$ is finitely generated then $A$ has a Spanier-Whitehead $K$-dual. This result  is 
analogous to the classical theorem that any space of the homotopy type of a finite $CW$-complex 
has a classical Spanier-Whitehead dual.

\begin{Pro} Suppose given a countable $\ZZ/2$-graded abelian group $G_*$.  Then there exists a sequence 
\[
A_0 \to A_1 \to A_2 \to \dots  \to A_n \to \dots
\]
of $C^*$-algebras and $C^*$-maps such that 
\begin{enumerate}
\item The unitalization $A_n^+$ satisfies that $ A_n^+ \cong C(X_n)$ for some finite CW complex $X_n$.
\item Each map
\[
K_*(A_n) \longrightarrow K_*(A_{n+1}) 
\]
is an inclusion. 
\item  There is an isomorphism
\[
\Dirlim K_*(A_n) \cong G_*.
\]
\item Let $A = \Dirlim A_n $.   Then $A \cong C_0(X)$ is a separable commutative $C^*$-algebra in the bootstrap category, and 
\[
K_*(A) \cong G_*  .
\]
\end{enumerate}
\end{Pro}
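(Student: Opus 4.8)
The plan is to build the algebras $A_n$ as $C_0$ of finite pointed complexes, realizing $G_*$ as the colimit of the reduced $K$-theories of a system of ``generalized Moore complexes'' together with cellular maps inducing the structure maps of that colimit. First I would present $G_*$ as a directed union of finitely generated graded subgroups: choosing generators $g_1,g_2,\dots$ of the countable group $G_*$ and letting $G^{(n)}_*$ be the graded subgroup generated by the first $n$ of them, one gets finitely generated graded groups with $G^{(n)}_* \hookrightarrow G^{(n+1)}_*$ injective and $\Dirlim G^{(n)}_* \cong G_*$.

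Next, fix a large integer $m$ and realize each $G^{(n)}_*$ as the reduced $K$-theory of a finite pointed complex $X_n$. Writing the even part of $G^{(n)}_*$ as $\ZZ^{r}\oplus\bigoplus_i \ZZ/d_i$ and the odd part similarly, take $X_n$ to be a wedge of spheres $S^{2m}$ (one per free even generator) and $S^{2m+1}$ (free odd), together with mod-$d$ Moore spaces $P^{2m}(d)=S^{2m-1}\cup_d e^{2m}$ (torsion even) and $P^{2m+1}(d)$ (torsion odd). Then $\widetilde K^0(X_n)$ is the even part and $\widetilde K^1(X_n)$ the odd part, so with $A_n := C_0(X_n\setminus\{*\})$ one has $A_n^+ \cong C(X_n)$ and $K_j(A_n)\cong \widetilde K^j(X_n)\cong G^{(n)}_j$. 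Two features of this choice are essential: every generator of $\widetilde K^0$ (resp. $\widetilde K^1$) sits in a single dimension, so the Adams operation $\psi^k$ acts by the one scalar $k^m$ throughout; and for $m$ large the complexes all lie in the stable range, their cells occupying only the three dimensions $2m-1,2m,2m+1$ regardless of $n$.

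The heart of the argument, and the step I expect to be the main obstacle, is to realize the injection $\iota_n: G^{(n)}_*\hookrightarrow G^{(n+1)}_*$ by a based map $f_n: X_{n+1}\to X_n$ with $f_n^* = \iota_n$ on $\widetilde K^*$. Since $\widetilde K^*$ of a finite wedge splits and the complexes are in the stable range, $f_n$ may be assembled from its matrix of components between wedge summands, each of which can be written down explicitly: a homomorphism $\ZZ\to\ZZ$ between two copies of $S^{2m}$ is a degree map; a homomorphism $\ZZ\to\ZZ/d$ is the pinch $P^{2m}(d)\to S^{2m}$ (which induces reduction on $\widetilde K^0$) followed by a degree map; a homomorphism $\ZZ/d\to\ZZ$ is forced to be zero and is realized by a null map; and a homomorphism $\ZZ/d\to\ZZ/d'$ is realized by a standard map of Moore spaces $P^{2m}(d')\to P^{2m}(d)$, with the same recipe one dimension up for the odd part. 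The uniform-dimension choice is precisely what removes the Adams-operation obstruction that would otherwise forbid a nonzero map $\ZZ\to\ZZ$ between spheres of different dimension; the delicate point to check is that the maps of Moore spaces realize every element of $\mathrm{Hom}(\ZZ/d,\ZZ/d')$ on $K$-theory, which is classical once $m$ is large enough to be in the stable range. Dualizing, each based $f_n$ gives a $*$-homomorphism $A_n\to A_{n+1}$ inducing $\iota_n$, so the maps $K_*(A_n)\to K_*(A_{n+1})$ are injective, giving (2) and (3).

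Finally I would set $A=\Dirlim A_n$. Each $A_n$ is separable, commutative, and lies in the bootstrap category, and all three properties pass to a countable inductive limit, so $A$ is a separable commutative bootstrap algebra; by Gelfand duality $A\cong C_0(X)$ for $X$ the locally compact Hausdorff spectrum of $A$ (the inverse limit of the pointed system $X_n\setminus\{*\}$). Continuity of $K$-theory then yields $K_*(A)\cong \Dirlim K_*(A_n)\cong \Dirlim G^{(n)}_*\cong G_*$, which together with the above establishes all four assertions.
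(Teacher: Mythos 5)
Your proposal is correct, and its opening move---writing $G_*$ as an increasing union of finitely generated graded subgroups---is exactly the paper's first step; but where the paper then disposes of everything else by citing Theorem 5.1 of \cite{UCTMilnor}, you reconstruct that realization result from scratch, so your route is genuinely more self-contained. Concretely, you realize each $G^{(n)}_*$ by a wedge of spheres and Moore spaces confined to the dimensions $2m-1$, $2m$, $2m+1$, realize each inclusion $G^{(n)}_*\hookrightarrow G^{(n+1)}_*$ by an explicit based map $X_{n+1}\to X_n$ assembled from degree maps, pinch-followed-by-degree maps, standard Moore-space maps, and null maps, dualize to $*$-homomorphisms $C_0(X_n\setminus\{*\})\to C_0(X_{n+1}\setminus\{*\})$, and finish by continuity of $K$-theory. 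The delicate points you flag are the right ones, and they are indeed classical in the stable range: maps into a finite wedge split into components, stable maps are realized by genuine based maps (Freudenthal), every homomorphism $\ZZ/d\to\ZZ/d'$ is induced on $\widetilde K{}^0$ by a map of Moore spaces (cofiber sequence for the Moore spectrum plus naturality of the collapsing Atiyah--Hirzebruch spectral sequence), and your uniform-dimension choice is what kills the Adams-operation obstruction for maps between the free summands. What your argument buys is independence from the reference, at the cost of importing these stable-homotopy facts; what the paper's citation buys is brevity, since the quoted theorem was engineered to produce precisely such a system. One cosmetic caution: your parenthetical description of the spectrum of $A$ as the inverse limit of the spaces $X_n\setminus\{*\}$ needs care (the maps of locally compact spaces dual to your $*$-homomorphisms need not be proper), but this is harmless, since $A\cong C_0(X)$ follows from Gelfand duality applied to the separable commutative limit algebra, which is all the proposition asserts.
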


\begin{proof}
Write $G_*$ as the union of an increasing sequence of finitely generated $\ZZ/2$-graded abelian groups $G_*^n$.  Then apply \cite{UCTMilnor} Theorem 5.1.
\end{proof}

\begin{Thm}  
Suppose that $A$ is a separable $C^*$-algebra that satisfies the UCT and $K_*(A) $ is finitely generated. Then there exists a finite CW-complex (or finite minus a point) $X$ such  
that $A$ is Spanier-Whitehead $K$-dual to $C(X)$ (or $C_0(X \setminus pt)$).
 
\end{Thm}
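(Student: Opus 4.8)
The plan is to reduce the problem to the commutative case, where classical Spanier-Whitehead duality together with Theorem~\ref{classical=KK} produces a dual, and then to transport that duality back to $A$ along a $KK$-equivalence. The three ingredients are: (i) a commutative bootstrap algebra with the same $K$-theory as $A$; (ii) a $KK$-equivalence between $A$ and that algebra, supplied by the UCT; and (iii) a transport of the duality classes across that equivalence.

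First I would realize $K_*(A)$ geometrically. Since $K_*(A)$ is a finitely generated $\ZZ/2$-graded abelian group, the preceding Proposition (applied with the constant sequence $G_*^n = K_*(A)$, so that the construction terminates at a single finite complex) produces a finite CW complex $Y$ and a commutative algebra $B$, of the form $C(Y)$ or $C_0(Y\setminus pt)$, lying in the bootstrap category with $K_*(B)\cong K_*(A)$; concretely $Y$ may be taken to be a finite wedge of spheres (for the free part) and Moore spaces (for the torsion). Next I would produce a $KK$-equivalence $A\simeq B$. Both algebras satisfy the UCT and have isomorphic finitely generated $K$-theory, so surjectivity of the index map $\gamma_\infty$ lets me lift a chosen isomorphism $\phi\colon K_*(A)\to K_*(B)$ to a class $\alpha\in KK_0(A,B)$ with $\alpha_*=\phi$, and similarly lift $\phi^{-1}$ to $\beta\in KK_0(B,A)$. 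The composites $\alpha\otimes_B\beta\in KK_0(A,A)$ and $\beta\otimes_A\alpha\in KK_0(B,B)$ each induce the identity on $K$-theory, hence are $KK$-invertible by Proposition~2.5; standard algebra then shows $\alpha$ itself is $KK$-invertible, with $\alpha\otimes_B\beta=1_A$ and $\beta\otimes_A\alpha=1_B$.

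The finite complex $Y$ has a classical Spanier-Whitehead dual $DY$, and Theorem~\ref{classical=KK} makes $B$ Spanier-Whitehead $K$-dual to $C(DY)$ (finite, or finite minus a point), with duality classes $\mu\in KK_*(\CC,B\otimes C(DY))$ and $\nu\in KK_*(B\otimes C(DY),\CC)$ satisfying $\mu\otimes_B\nu=1_{C(DY)}$ and $\mu\otimes_{C(DY)}\nu=1_B$. Finally I would transport this duality across $\alpha$. Writing $D=C(DY)$, set
\[
\mu_A = \mu\otimes_{B\otimes D}(\beta\otimes 1_D), \qquad \nu_A = (\alpha\otimes 1_D)\otimes_{B\otimes D}\nu,
\]
where $\beta\otimes 1_D\in KK(B\otimes D,A\otimes D)$ and $\alpha\otimes 1_D\in KK(A\otimes D,B\otimes D)$ are external products. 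Then $\mu_A\in KK_*(\CC,A\otimes D)$ and $\nu_A\in KK_*(A\otimes D,\CC)$, and a manipulation of Kasparov products exactly parallel to the proof of Theorem~\ref{twoduals} — using associativity, commutativity of the external product, and the cancellations $\beta\otimes_A\alpha=1_B$, $\alpha\otimes_B\beta=1_A$ — shows that contracting the $A$-legs gives $\mu_A\otimes_A\nu_A=\mu\otimes_B\nu=1_D$, while contracting the $D$-legs gives $\mu_A\otimes_D\nu_A=\alpha\otimes_B\beta=1_A$. Taking $X=DY$ then completes the proof.

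I expect the transport computation to be the main technical point: one must keep the three tensor factors $A$, the dual $D$, and the intermediate $B$ straight while contracting, and verify that the copies of $\alpha$ and $\beta$ cancel in the correct order on the correct legs. This is routine given Theorem~\ref{twoduals}, but it is where the bookkeeping lives; by contrast the realization and UCT steps are essentially citations. Since I restrict to the even case the signs are all $+1$; in the odd case the sign conventions of \cite{KPW} would be carried along unchanged.
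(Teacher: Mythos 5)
Your proposal is correct and takes essentially the same route as the paper: realize $K_*(A)$ by a finite complex $Y$, take its classical Spanier--Whitehead dual, invoke Theorem \ref{classical=KK} to get a $K$-duality between $C(Y)$ and $C(DY)$, and transport it to $A$ along the $KK$-equivalence supplied by the UCT. The only difference is that the paper dispatches the transport step by citing Theorem \ref{T:basic}(4) (invariance of duality under $KK$-equivalence, stated there without proof), whereas you verify it explicitly via $\mu_A$, $\nu_A$ --- which is fine, provided you note that $\beta$ must be replaced by the actual $KK$-inverse of $\alpha$ before the cancellations $\alpha\otimes_B\beta=1_A$, $\beta\otimes_A\alpha=1_B$ can be used.
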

 
\begin{proof}
Let $Y$ be a finite complex (or finite minus a point) such that $K^*(Y) \cong K_*(A)$.   
 The space $Y$ has a classical Spanier-Whitehead dual; pick one that has a duality map 
$X \times Y \to  S^{2n}$.   Theorem \ref{classical=KK} implies that $C(X)$ and $C(Y)$ are 
Spanier-Whitehead $K$- dual.  Now $A$ and $C(Y)$ are $KK$-equivalent, by the UCT 
\cite{RS}, and so Proposition \ref{T:basic} implies that $A$ and $C(X)$ are Spanier-Whitehead $K$- dual.
\end{proof}
 
\begin{Rem}
If $A$ is separable, satisfies the UCT, but 
 $K_*(A) $ is not finitely generated then separability implies that $K_*(A)$  is   countable, and we may apply the previous result to obtain 
a locally compact space $Y$ such that $K_*(A) \cong K_*(C_0(Y))$.  Then
$A$ and $C_0(Y)$
 are $KK$-equivalent by the UCT. The problem 
now is topological: how do you take the Spanier-Whitehead dual of a compact space that is not of the homotopy type of a finite 
CW-complex? (The situation is analogous to Paschke duality, which we
discuss in Section 7).  It turns out that if $X$ is finite-dimensional
then  one may use {\emph{functional Spanier-Whitehead duals}}
as in \cite{KKS}.
However,  the resulting Spanier-Whitehead dual must be treated as a spectrum rather 
than a space.   In principle one could move to a larger
category at this point, but we refrain. 
\end{Rem}
\begin{Rem}
It is often useful to view the category $\mathcal{KK}$, with objects separable
$C^*$-algebras and with 
morphisms $KK(A,B)$, as analogous to the stable homotopy category
of
countable CW-complexes and stable homotopy classes of maps, $\mathcal{SH}$,
cf. \cite{Meyer, Meyer-Nest}.  In the stable homotopy setting there is a result of
Boardman \cite{Boardman} which implies that the largest full subcategory
of $\mathcal{SH}$ closed under Spanier-Whitehead duality is
that determined by stable homotopy types of finite CW-complexes.  
It is interesting that the results of Section 6 
lead to a noncommutative version of Boardman's theorem.

Let $\mathcal{KK^*}$ be the full subcategory of $\mathcal{KK}$ with objects nuclear
$C^*$-algebras in the bootstrap category.  
The algebras in $\mathcal{KK^*}$ will
satisfy the UCT \cite{RS} and are all KK-equivalent
to $C(X)$ or $C_0(X \setminus pt)$, for $X$ a compact Hausdorff space.   

Let $\mathcal{KK}_F$ be the full subcategory of $\mathcal{KK}^*$ with objects
that have finitely generated K-theory.  

\begin{Pro}
  The category $\mathcal{KK}_F$ is the largest subcategory of $\mathcal{KK}^*$ closed
  under Spanier-Whitehead K-duality. 
\end{Pro}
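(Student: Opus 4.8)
The plan is to prove the two inclusions separately: first that $\mathcal{KK}_F$ is itself closed under Spanier-Whitehead $K$-duality, and second that every subcategory of $\mathcal{KK^*}$ closed under Spanier-Whitehead $K$-duality is contained in $\mathcal{KK}_F$. Together these identify $\mathcal{KK}_F$ as the largest such subcategory. Since finite generation of $K$-theory is a $KK$-invariant and, by Theorem \ref{T:basic}(4), duals respect $KK$-equivalence, the condition of being closed under duality is well-defined at the level of $KK$-equivalence classes, so it is harmless to speak of ``the'' dual of an object.

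For the closure statement, I would take an arbitrary object $A$ of $\mathcal{KK}_F$. By definition $A$ is separable, nuclear, and lies in the bootstrap category---hence satisfies the UCT---and has finitely generated $K$-theory. The existence theorem of this section then supplies a Spanier-Whitehead $K$-dual for $A$, and in fact produces one of the form $C(X)$ or $C_0(X \setminus pt)$ for a finite CW complex $X$. Such an algebra is commutative, nuclear, in the bootstrap category, and has finitely generated $K$-theory since $X$ is finite; hence its $K$-dual again lies in $\mathcal{KK}_F$. Thus every object of $\mathcal{KK}_F$ admits a dual inside $\mathcal{KK}_F$.

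For maximality, let $\mathcal{C}$ be any subcategory of $\mathcal{KK^*}$ closed under Spanier-Whitehead $K$-duality, and let $A$ be any object of $\mathcal{C}$. By hypothesis $A$ has a Spanier-Whitehead $K$-dual $DA$ with $DA \in \mathcal{C} \subseteq \mathcal{KK^*}$. Every object of $\mathcal{KK^*}$ is in the bootstrap category, so both $A$ and $DA$ satisfy the UCT and Theorem \ref{T:basic} applies to the dual pair $(A, DA)$. Part (2) of that theorem forces $K_*(A)$ to be finitely generated, whence $A \in \mathcal{KK}_F$. As $A$ was arbitrary, $\mathcal{C} \subseteq \mathcal{KK}_F$, which completes the proof.

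The substantive input, and the conceptual heart of the statement, is Theorem \ref{T:basic}(2): the mere existence of a Spanier-Whitehead $K$-dual within a class satisfying the UCT already forces the $K$-theory to be finitely generated. This is the precise noncommutative analogue of the finiteness constraint in Boardman's theorem, and it is what pins the answer down to exactly $\mathcal{KK}_F$ rather than to some larger class. The only remaining point requiring care is that the dual produced by the existence theorem can be chosen commutative with finite-complex spectrum, so that it visibly lands back in $\mathcal{KK}_F$; everything else is bookkeeping with the UCT to ensure Theorem \ref{T:basic} is available.
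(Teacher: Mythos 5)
Your proposal is correct and takes essentially the same approach as the paper: closure of $\mathcal{KK}_F$ follows from the existence theorem producing duals $KK$-equivalent to $C(X)$ or $C_0(X \setminus pt)$ with $X$ a finite complex, and maximality follows because any dual pair inside $\mathcal{KK^*}$ satisfies the UCT and hence has finitely generated $K$-theory. The only difference is bibliographic: for the finiteness step you invoke Theorem \ref{T:basic}(2), whereas the paper cites the same underlying fact from \cite{KPW}, Section 4.4(d) (noting the argument there, stated for odd duality, works in the even case as well), so this is not a genuinely different route.
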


\begin{proof}
First we note that Theorem 6.2 shows that any object, $A$, in
$\mathcal{KK}_F$ is KK-equivalent to $C(X)$, for $X$ a finite complex.
Thus, $A$ has a dual which is KK-equivalent to $C(Y)$ with $Y$ a
finite complex.  Hence, $\mathcal{KK}_F$ is closed under taking
Spanier-Whithead K-duals.

To complete the proof we must show that any object in $\mathcal{KK^*}$
which has a Spanier-Whitehead K-dual in $\mathcal{KK^*}$
will have finitely generated K-theory, hence will be in $\mathcal{KK}_F$.  
This is proved in \cite{KPW}, Section 4.4(d).  The hypothesis there is
that there is an odd Spanier-Whitehead K-duality, but the proof works
in the even case as well.
\end{proof}

\end{Rem}

\section{Non-existence of Spanier-Whitehead K-Duals}

Not every nice $C^*$-algebra in the bootstrap category has a separable bootstrap $KK$-dual. Here is an example.  
The following proposition is actually an instant consequence of Theorem \ref{T:basic}    but we give a direct proof 
to illustrate what goes wrong.

\begin{Pro}
\label{7.1} Suppose that  $A$ is separable, satisfies the UCT,   $K_0(A) \cong \QQ $   and $K_1(A) = 0$. 
Then  $A$ cannot have a separable Spanier-Whitehead $K$-dual that satisfies the UCT.
\end{Pro}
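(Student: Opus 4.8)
The plan is to derive a contradiction by showing that the existence of a separable, UCT-satisfying Spanier-Whitehead $K$-dual $DA$ would force $K_*(A)$ to be finitely generated, which is impossible since $\QQ$ is not finitely generated as an abelian group. This follows immediately from Theorem \ref{T:basic}(2), which asserts that any $C^*$-algebra possessing a Spanier-Whitehead $K$-dual (with both satisfying the UCT) has finitely generated $K$-theory; but the stated goal is to give a \emph{direct} proof exhibiting the concrete obstruction, so I will instead argue from the duality axioms themselves.

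First I would suppose, for contradiction, that such a dual $DA$ exists with duality classes $\mu \in KK_0(\CC, A \otimes DA)$ and $\nu \in KK_0(A \otimes DA, \CC)$. By Theorem \ref{twoduals}, slant product with $\nu$ gives an isomorphism $K_*(A) \cong K^*(DA)$, and hence $K^*(DA) \cong \QQ$ in degree zero and $0$ in degree one. The next step is to analyze what the UCT for $DA$ says about the groups $KK_*(DA, \CC) = K^*(DA)$ and $KK_*(\CC, DA) = K_*(DA)$. Since $DA$ satisfies the UCT, there is a natural short exact sequence relating $K^*(DA) = KK_*(DA,\CC)$ to $\Hom(K_*(DA), \ZZ)$ and $\Ext(K_*(DA), \ZZ)$.

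The heart of the argument is the observation that $\QQ$ cannot arise as $\Hom(H, \ZZ)$ for any abelian group $H$: the group $\Hom(H,\ZZ)$ is always torsion-free and, more to the point, is never isomorphic to $\QQ$, since any nonzero homomorphism to $\ZZ$ forces $\Hom(H,\ZZ)$ to contain $\ZZ$ as a direct summand (the image is a nonzero subgroup of $\ZZ$, hence free), whereas $\QQ$ has no nonzero free direct summand and is not finitely generated. I would combine this with the fact that $\Ext(H,\ZZ)$ is a divisible group, so that running the UCT sequence for $DA$ produces $K^*(DA)$ as an extension of a subgroup of $\Hom(K_*(DA),\ZZ)$ by a divisible group; matching this against $K^*(DA) \cong \QQ$ (which is divisible and torsion-free but not of the form $\Ext$ of anything giving the right rank, nor containing the required free part) yields the contradiction. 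The main obstacle is bookkeeping the parity/grading carefully in the $\ZZ/2$-graded UCT sequence and verifying that neither the $\Hom$ term nor the $\Ext$ term can account for a copy of $\QQ$ appearing in $K^*(DA)$; once this algebraic incompatibility between $\QQ$ and the structure imposed by the UCT is pinned down, the non-existence follows at once.
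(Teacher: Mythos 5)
Your opening reduction via Theorem \ref{T:basic}(2) is fine, and is exactly the remark the paper itself makes before giving its direct proof. The first steps of your direct argument are also correct: duality gives $K^0(DA)\cong\QQ$ and $K^1(DA)=0$, and the group $Hom(H,\ZZ)$ can never be (or contain) $\QQ$ --- indeed it is reduced, and has $\ZZ$ as a direct summand whenever it is nonzero. But the proof as proposed has a genuine gap, located precisely at what you call the heart of the argument. First, a small error: $Ext(H,\ZZ)$ is \emph{not} divisible for general $H$ (e.g.\ $Ext(\ZZ/n,\ZZ)\cong\ZZ/n$); divisibility holds when $H$ is torsion free, and a priori $K_*(DA)$ may have torsion. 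Second, and more seriously: once you observe that the image of $\QQ=K^0(DA)$ in the reduced group $Hom(K_0(DA),\ZZ)$ must vanish (it is a divisible subgroup of a reduced group), the UCT sequence for $DA$ forces $\QQ\cong Ext(K_1(DA),\ZZ)$, so your entire contradiction rests on the claim that no abelian group $H$ can satisfy $Ext(H,\ZZ)\cong\QQ$. You assert this (``not of the form $Ext$ of anything giving the right rank'') without proof, but it is the crux and it is not elementary. For \emph{countable} $H$ it is provable, but only using substantial facts: Stein's theorem (a countable group with $Ext(H,\ZZ)=0$ is free), the theorem that $Ext(F,\ZZ)$ of a countable, torsion-free, non-free $F$ has torsion-free rank $2^{\aleph_0}$, and a separate argument for the torsion part (for torsion $T$, $Ext(T,\ZZ)\cong Hom(T,\QQ/\ZZ)$ is a product of modules over rings of $p$-adic integers, hence never $\QQ$). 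For uncountable $H$ the statement lives in the territory of Whitehead-problem--style set-theoretic algebra. This is exactly where your proof fails to use the one hypothesis any correct proof must use: \emph{separability} of $DA$, which is what makes $K_*(DA)$ countable and puts you in the tractable case; your proposal never invokes it.

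Compare this with the paper's proof, which runs the UCT in the opposite direction and thereby avoids all of these difficulties. The paper computes the $K$-theory (not the $K$-homology) of the dual: from $K^0(DA)\cong\QQ$ and $K^1(DA)=0$, the UCT gives $K_1(DA)\cong Ext(K^0(DA),\ZZ)=Ext(\QQ,\ZZ)\cong\RR$ by Wiegold's theorem \cite{Wiegold}. Since $\RR$ is uncountable while the $K$-theory of a separable $C^*$-algebra is countable, the contradiction is immediate: one classical computation plus separability finishes the argument. If you want to salvage your route, you must (i) drop the false divisibility claim for $Ext$, (ii) explicitly invoke separability of $DA$ to conclude $K_*(DA)$ is countable, and (iii) prove or cite the fact that $Ext(H,\ZZ)\cong\QQ$ is impossible for countable $H$ --- at which point your argument becomes correct, but strictly harder than the paper's.
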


Note that $A$  may be taken to be  an AF-algebra,  the direct limit of finite dimensional matrix rings, 
and (by the UCT) is unique up to $KK$-equivalence. One may use this $C^*$-algebra to localize 
$K$-theory, so it should not be thought of as bizarre. 

\begin{proof}
Suppose that $A$
has a $K$-dual $DA$ that is separable and satisfies the UCT, so that
$K^0(DA) = \QQ $ and $K^1(DA) = 0$. 
We apply the UCT to $K_1(DA)$,
\begin{equation}
  \label{eq:8}
  0 \to Ext(K^0(DA), \ZZ) \to K_1(DA) \to Hom(K^1(DA),\ZZ) \to 0.  
\end{equation}

But, $Hom(K^1(DA),\ZZ) = 0$ and one has $$K_1(DA) \cong Ext(K^0(DA), \ZZ) =
Ext(\QQ, \ZZ).$$

This leads to a contradiction since it is known that $Ext(\QQ, \ZZ) \cong \RR$,
\cite{Wiegold}\cite{Pext}, but since $DA$ is separable  $K_1(DA)$ is a
countable group.
\end{proof}

\section{Mod-p $K$-theory}

There are two standard constructions of topological mod-p $K$-theory $K_*(A; \ZZ/p)$.  

The first construction,which appears in Schochet \cite{Top4}, is to select a $C^*$ algebra $N$ in the bootstrap 
category with $K_0(N) = \ZZ/p $ and $K_1(N) = 0$, and then for any $C^*$-algebra $A$ define
\[
K_j(A ; \ZZ/n ) = K_j(A \otimes N).
\]

In \cite{Top4} we initially built $N$ from a Moore space (a space whose reduced homology is zero except in one degree, where it is $\ZZ/p$)   and 
then subsequently showed   that any bootstrap choice for $N$ gave an isomorphic theory.  

The second construction, the kernel of which appears in Dadarlat-Loring \cite{Dadarlat-Loring},
 is to select a $C^*$ algebra $N$ in the bootstrap 
category with $K_0(N) = \ZZ/p $ and $K_1(N) = 0$, and then for any $C^*$-algebra $A$ define
\[
K_j(A ; \ZZ/n ) = KK_{j-1}(N, A).
\]

Dadarlat-Loring used a dimension-drop algebra with suitable $K$-theory, but it is clear that any bootstrap choice will work equally well.  Note that 
the dimension-shift comes from the UCT isomorphism 
\[
\ZZ/p  \cong  Ext( K_0(N), K_0(\CC )) \overset{\cong}\longrightarrow KK_1(N, \CC ) .
\]
We were asked by Jeff Boersema whether these two constructions are equivalent.  The second construction is defined on a somewhat smaller category than the first, 
but with that caveat we shall demonstrate that the two constructions are equivalent. 

Let us fix $N$ as above.  Since it is in the bootstrap category we know that $DN$ exists, and   using the  UCT we obtain
\[
K_0(DN) = 0  \qquad \qquad K_1(DN) = \ZZ/p  .
\]
Since $DN$ is also in the bootstrap category, we conclude at once that $SDN$ is $KK$-equivalent to $N$.  Assume that $A$ is separable so that the $KK$-pairing 
is available. Then we have our result:
\[
K_j(A \otimes N) \cong KK_j(\CC, A\otimes N)  \cong KK_j(DN, A) \cong KK_{j-1}(SDN, A) \cong KK_{j-1}(N,A)
\]
and we have proved the following theorem:

\begin{Thm} Suppose that $A$ is separable and that  $N$ is chosen in the bootstrap 
category with $K_0(N) = \ZZ/p $ and $K_1(N) = 0$.  Then the two different constructions of mod-p $K$-theory 
\[
K_j(A ; \ZZ/n ) = K_j(A \otimes N)   \qquad and \qquad   K_j(A ; \ZZ/n ) = KK_{j-1}(N, A)
\]
are naturally equivalent. 
\end{Thm}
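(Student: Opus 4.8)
The plan is to build a single natural chain of $KK$-isomorphisms connecting the two definitions, with the whole identification governed by a carefully chosen Spanier-Whitehead $K$-dual of the coefficient algebra $N$. First I would invoke the existence theorem of Section 6: since $N$ lies in the bootstrap category and $K_*(N)$ is finite, hence finitely generated, $N$ has a Spanier-Whitehead $K$-dual $DN$, and $DN$ is again separable, nuclear, and bootstrap (indeed $DN$ again lies in $\mathcal{KK}_F$, so $K_*(DN)$ is finitely generated).

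Next I would pin down the $K$-theory of $DN$. The duality slant product $(-)\otimes_N \nu$ of Theorem \ref{twoduals} gives $K_*(N)\cong K^*(DN)$, so $K^0(DN)=\ZZ/p$ and $K^1(DN)=0$. Feeding this into the UCT \cite{RS} for $KK_*(DN,\CC)=K^*(DN)$ and using that $K_*(DN)$ is finitely generated, I would solve for $K_*(DN)$. The degree-one sequence forces $\mathrm{Ext}(K_0(DN),\ZZ)=0$, so $K_0(DN)$ is free; comparing orders in the degree-zero sequence (a finite group cannot surject onto a nonzero free group) then forces $K_0(DN)=0$, leaving $\mathrm{Ext}(K_1(DN),\ZZ)\cong\ZZ/p$ and hence $K_1(DN)\cong\ZZ/p$. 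Suspending, $K_0(SDN)=\ZZ/p$ and $K_1(SDN)=0$, so $SDN$ and $N$ have isomorphic $K$-theory; since both are bootstrap, the UCT upgrades this to a $KK$-equivalence $SDN\simeq N$.

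With $DN$ in hand the computation is the chain
\[
KK_j(\CC,\,A\otimes N)\;\cong\;KK_j(DN,\,A)\;\cong\;KK_{j-1}(SDN,\,A)\;\cong\;KK_{j-1}(N,\,A),
\]
where the first isomorphism is the duality isomorphism of equation (\ref{thing}) applied with auxiliary algebras $F=\CC$ and $G=A$ (so that only $N$ and $DN$ must be dual, and nothing is assumed about $A$ beyond separability), the second is the suspension isomorphism in the first variable, and the third is induced by the $KK$-equivalence $SDN\simeq N$. The left-hand term is $K_j(A\otimes N)$, the first definition, and the right-hand term is $KK_{j-1}(N,A)$, the second.

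I expect the main obstacle to lie not in the formal chain but in two points surrounding it. The first is getting the $K$-theory of $DN$ exactly right, so that the torsion lands in odd degree rather than even: this is precisely the source of the dimension shift $j\mapsto j-1$ in the statement, and it is easy to misread the degree convention in the UCT. The second is naturality in $A$: each isomorphism in the chain is natural in the second variable — the duality isomorphism of (\ref{thing}), the suspension isomorphism, and the map induced by a fixed $KK$-equivalence $SDN\simeq N$ — so the composite is natural, but this is exactly where one must use the flexible auxiliary-algebra form of Theorem \ref{twoduals} rather than the cleaner statement of Theorem \ref{T:basic}(3), since the latter assumes the UCT for the variable algebra while here $A$ is only assumed separable.
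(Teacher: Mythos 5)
Your proposal is correct and follows essentially the same route as the paper: form the Spanier--Whitehead $K$-dual $DN$ (Section 6), use the UCT to compute $K_0(DN)=0$ and $K_1(DN)=\ZZ/p$, conclude that $SDN$ is $KK$-equivalent to $N$, and then run the chain $K_j(A\otimes N)\cong KK_j(\CC,A\otimes N)\cong KK_j(DN,A)\cong KK_{j-1}(SDN,A)\cong KK_{j-1}(N,A)$. Your added care about the auxiliary-algebra form of the duality isomorphism (so that $A$ need only be separable) and the explicit UCT computation of $K_*(DN)$ are details the paper leaves implicit, but the argument is the same.
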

\qed

\begin{Rem}  The same argument shows that $K_*(A; G)$ is uniquely defined for any finite abelian group. However if one were dealing with a group such as $\QQ/\ZZ$, 
for instance, then much more care is required. Torsion will be governed by the behavior of the functor $Ext(-, \ZZ)$ and the torsionfree part of this group will bring us to the 
same difficulty illustrated by the case where $K_0(N) = \QQ $. 
\end{Rem}

\begin{Rem}
In the proof of our result we show that $SDN$ is $KK$-equivalent to $N$.  This is actually stronger than Poincar\'e duality, as it corresponds to the statement that the Moore 
space is actually stably homotopy equivalent to its dual.  We may obtain the requisite duality maps in $KK(N\otimes N, \CC )$ and $KK(\CC, N \otimes N)$  by first creating 
the maps at the level of Moore spaces, moving them to $KK$, and then using the $KK$-equivalences.
\end{Rem}

\section{Paschke Duality}
 
We have seen that not every separable $C^*$-algebra has a Spanier-Whitehead $K$-dual, 
even if we make bootstrap hypotheses. In \cite{Paschke},  Paschke   developed a different 
sort of duality that is 
\begin{enumerate}
\item better, because it is defined for every separable $C^*$-algebra;
\item worse, because the resulting dual is in general non-separable, we cannot form the double 
dual, and only one of the two duality maps is present.
\end{enumerate}
After describing the Paschke dual, $\PP (A)$, we discuss the possibility of substituting more tractable 
$C^*$-algebras in place of $\PP (A)$.

These results are due to Paschke \cite{Paschke} as refined by Higson and Roe \cite{HR}.

Let $\HH $ be a separable Hilbert space. Let $\LL(\HH) $ denote the $C^*$-algebra of bounded operators on $\HH$ and $\KK = \KK(\HH)$ 
denote the compact operators.   
Let $\pi : \LL(\HH ) \to \LL (\HH) / \KK  \cong \calk = \calk (\KK) $ be the projection of the bounded operators to the Calkin algebra. 
Suppose that $A$ is a separable, unital $C^*$-algebra with an ample\footnote{A representation $\rho: A \to \LL(H))$ is {\emph{ample}}
if it is non-degenerate and if $\rho(A)\cap \KK = 0$. } representation $ \rho: A \to \LL(H)$.  Define 
\[
\DD_\rho (A) = \{ T \in \LL (H) : \pi (T\rho(a) - \rho(a)T ) = 0 \qquad \forall a \in A \}.
\]
The projection of this algebra in the Calkin algebra is  $\PP (A)  = \pi (\DD_{\rho}(A))$, the {\emph{Paschke dual}} of $A$. 
Since $\PP (A) $ is independent of the choice of ample
representation by Voiculescu's Theorem \cite{Voiculescu}, we shall
drop $\rho$ from the notation.  
In general $\PP (A)$ is unital, but it is typically neither separable nor nuclear.
Paschke's theorem is the following (\cite{Paschke}, Theorem 2 ).  
\begin{Thm}  Let $A$ be a separable, unital $C^*$-algebra with an
  ample representation on $\LL(\HH)$. Then one has
that
\[
K_0(\PP (A) ) \cong Ext^1(A) 
\]
and hence, if $A$ is nuclear, that 
\[
K_0(\PP(A)) \cong K^1(A).
\]
and similarly for $K_1$.  
\end{Thm}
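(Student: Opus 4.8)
The plan is to construct an explicit isomorphism $K_0(\PP(A)) \to Ext^1(A)$ by compressing the fixed ample representation $\rho$ by projections in the dual algebra $\DD_\rho(A)$, and then to deduce the nuclear statement and the companion statement for $K_1$ from standard BDF--Kasparov theory together with a suspension argument. Throughout I recall that $Ext^1(A)$ is the group of stable unitary equivalence classes of extensions of $A$ by $\KK$, equivalently of $*$-homomorphisms $A \to \calk$ modulo the split ones, with addition given by direct sum.

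First I would build the forward map on projections. Given a projection $p \in \PP(A)$, lift it to a self-adjoint $P \in \DD_\rho(A)$ with $P^2 - P \in \KK$ and $[P, \rho(a)] \in \KK$ for all $a \in A$. The compression $a \mapsto \pi(P\rho(a)P) = \pi(P)\,\pi(\rho(a))\,\pi(P)$ is then multiplicative modulo compacts: using $[P,\rho(b)]\in \KK$ and $P^2 \equiv P$ one checks that $P\rho(a)P\cdot P\rho(b)P \equiv P\rho(ab)P$ in $\calk$. Hence it is a genuine $*$-homomorphism $\tau_P \colon A \to \pi(P)\,\calk\,\pi(P) \cong \calk$, i.e. an extension, once $\pi(P)$ is arranged to have infinite rank and co-rank (which one can always force by adding copies of $\rho$). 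I would then verify that the class $[\tau_P] \in Ext^1(A)$ is independent of the lift $P$, is additive under orthogonal direct sums of projections, and is invariant under homotopies of projections; this produces a well-defined homomorphism $\Phi \colon K_0(\PP(A)) \to Ext^1(A)$.

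The heart of the argument, and the step I expect to be the main obstacle, is proving that $\Phi$ is bijective, and here Voiculescu's theorem does the work. For surjectivity, given any extension $\tau \colon A \to \calk$, I would use the absorption property of the ample representation $\rho$ --- that $\tau \oplus \pi\rho$ is unitarily equivalent to $\pi\rho$ --- to realize $\tau$, after adding a split extension, as the compression $\tau_P$ of $\rho$ by a suitable projection $P \in \DD_\rho(A)$, so that $[\tau] = \Phi([p])$. For injectivity I would show that if $\tau_P$ is split then $p$ is stably trivial in $K_0(\PP(A))$, again reducing via Voiculescu to comparing $P$ with an honest projection commuting with $\rho(A)$ exactly. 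The delicate points are that the equivalence relation defining $Ext^1(A)$ (unitary equivalence modulo split extensions) matches the relation defining $K_0(\PP(A))$, and that $Ext^1(A)$ is genuinely a group so that $\Phi$ both lands in and surjects onto a group; this is precisely where separability of $A$ and the full strength of Voiculescu's theorem are essential.

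Finally, the two displayed consequences are formal. If $A$ is nuclear then by Choi--Effros every extension is semisplit, so $Ext^1(A)$ is a group, and the BDF--Kasparov identification gives $Ext^1(A) \cong KK^1(A,\CC) = K^1(A)$; combined with $\Phi$ this yields $K_0(\PP(A)) \cong K^1(A)$. For the assertion ``and similarly for $K_1$'', I would run the same compression construction one degree up --- equivalently apply the $K_0$ result to the suspension, using that $\PP(SA)$ computes $S\PP(A)$ in $K$-theory --- to obtain $K_1(\PP(A)) \cong Ext^0(A)$, and hence $K_1(\PP(A)) \cong K^0(A)$ in the nuclear case by Bott periodicity and the same Kasparov identification. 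I would cite \cite{Paschke} and \cite{HR} for the detailed verifications only sketched here.
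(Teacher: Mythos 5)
You should first note that the paper does not prove this theorem at all: it is quoted verbatim as Paschke's Theorem 2 (\cite{Paschke}), ``as refined by Higson and Roe'' (\cite{HR}), so your sketch has to be measured against Paschke's own argument. In outline you have reconstructed that argument correctly --- compression of the ample representation by projections in the dual algebra to produce Busby invariants, Voiculescu's theorem for bijectivity, Choi--Effros plus the Kasparov picture of $Ext$ for the nuclear identification. But there is a genuine error at precisely the step you yourself call the heart of the proof. Voiculescu's absorption theorem does \emph{not} say that $\tau \oplus \pi\rho$ is unitarily equivalent to $\pi\rho$ for an arbitrary extension $\tau$; it says that $\tau \oplus \pi\rho$ is unitarily equivalent to $\tau$ (trivial ample extensions are absorbed by everything). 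The statement you invoke holds if and only if $[\tau]=0$ in $Ext(A)$: were it true for all $\tau$, every extension would be stably trivial and $Ext(A)$ would vanish, which the Toeplitz extension of $C(S^1)$ already contradicts. As written, your surjectivity argument therefore proves too much: it would realize an arbitrary Busby invariant as a compression $\tau_p$, whereas compressions are automatically \emph{invertible} in $Ext(A)$, since $\tau_p \oplus \tau_{1-p} = \pi\rho$ is trivial.

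This is compounded by a second, related error: separability and Voiculescu's theorem do not make your $Ext^1(A)$ (all extensions modulo split ones) into a group --- Voiculescu furnishes only the neutral element. Inverses require semisplitness, i.e.\ the Choi--Effros lifting theorem in the nuclear case; for general separable unital $A$ the $Ext$ semigroup genuinely fails to be a group (Anderson's 1978 example, and $C^*_r(F_2)$ by Haagerup--Thorbj{\o}rnsen). So for non-nuclear $A$ your map $\Phi$ can only surject onto the subgroup of invertible (equivalently, semisplit) classes, and that is how $Ext^1(A)$ must be read in the statement; your third paragraph and your fourth paragraph are in fact inconsistent with each other on this point. The correct surjectivity argument runs: given an invertible class $[\tau]$, choose $\tau'$ with $\tau\oplus\tau'$ trivial, lift $\tau\oplus\tau'$ to a genuine representation $\phi$, which after adding $\rho$ may be taken ample; the block projection onto the $\tau$-summand commutes exactly with the image of $\phi$ in the Calkin algebra, and Voiculescu's theorem \cite{Voiculescu} (uniqueness of ample representations up to unitary equivalence modulo compacts) transports it to a projection $p \in \PP(A)$ with $[\tau_p]=[\tau]$. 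With that repair --- and granting that your degree-one step, the assertion that $\PP(SA)$ has the $K$-theory of a suspension of $\PP(A)$, is itself a nontrivial verification carried out in \cite{HR} rather than a formality --- your outline does become Paschke's proof.
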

 
We
note that there is a canonical 
$*$-homomorphism 
\[
\Psi : A \otimes \PP (A) \longrightarrow \calk 
\]
given by 
\[
\Psi (x\otimes y) = \pi (\rho(x))y
\]
which is well-defined because $\pi\rho (A) $ commutes with each element of $\PP (A)$. 
The Kasparov groups $KK_*(A\otimes \PP(A), \calk) $ are defined and so we have 
\[
\nu = [\Psi] \in KK_0(A\otimes \PP(A), \calk ).
\]
Although the full Kasparov
product is not available (since $A\otimes \PP (A)$ is not separable), the slant product with 
the map $\Psi $ still makes sense and gives us a well-defined map

\[
K_0(  \PP(A)   )    \xrightarrow { (-)\otimes_{      \PP A}  \nu  } KK_0(A ,  \calk) 
  \overset{\delta}\longrightarrow
  KK^1(A, \KK ) \equiv K^1(A)
\]
which Paschke shows is an isomorphism.   Thus Paschke's duality result is a  one-sided 
duality.

The simplest case is actually of interest.  Take $A = \CC $.  Then $\PP(A) = \calk $,  $\Psi = 1_\calk $, 
\[
\nu = [1_\calk]    \in KK_0(\calk, \calk ).
\]
\[
\delta :   KK_0(\calk, \calk )  \longrightarrow KK_1(\calk, \KK )   
\]
 and  the UCT index map 
\[
\gamma _\infty :  KK_1(\calk, \KK )   \overset{\cong}\longrightarrow    Hom( K_1(\calk ), K_0(\KK)) \cong \ZZ 
\]
gives the Paschke
isomorphism 
\[
K_1(\calk ) \overset{\cong}\longrightarrow  K^0(\CC ) \cong \ZZ  .
\]
If we regard $K^0(\CC ) \cong K^1(S\CC ) = K^1(C_0(\RR ))$ then we have a way to realize a map in the other direction. 
Let 
\[
\tau :  C(S^1) \longrightarrow \calk 
\]
be the map that takes $z$ to the image of the adjoint of the
unilateral shift $U^*$. This map classifies the extension
 \footnote{ This
  is the storied extension that started the BDF work on the classification of essentially normal operators.}

\[
0 \to \KK \longrightarrow C^*\{ \KK , U^*, I\}  \longrightarrow C(S^1) \to  0
\]
Restrict $\tau $ to $C_0(\RR)$.    We then have the pullback diagram
\[
\begin{CD}
0   @.    0   \\
@VVV  @VVV   \\
\KK  @>>>  \KK   \\
@VVV      @VVV   \\
\mathcal{E}  @>>>  \mathcal{L}(\mathcal{H})  \\
@VVV   @VVV  \\
C_o(\RR) @>\tau >>  \calk  \\
@VVV   @VVV   \\
0   @.   0
\end{CD}
\]
The right column generates a (very!) canonical extension $\Upsilon \in Ext(\calk, \KK)$  and 
\[
[\tau] = \tau ^*(\Upsilon) \in Ext( C_o(\RR ) , \KK) \cong KK^1( C_o(\RR ) , \KK)  .
\]
Further, 
\[
\gamma _\infty ([\tau]) : K_1(C_o(\RR )) \overset\cong\longrightarrow K_0(\KK)  
\]
and this map is in a sense the inverse to the Paschke 
 isomorphism.  This example is the basis for our hope for 
the Conjecture at the end of Section 1.

The Paschke dual is not a Spanier-Whitehead $K$-dual, in general, for several related 
reasons. It is usually (perhaps always) non-separable, its $K$-theory is not necessarily 
finitely generated and may well be uncountable even for $A$ an AF-algebra, and there does 
not seem to be a duality class $\CC \to A\otimes \PP(A)$.  We discuss what can be done 
in future sections.

\section{$C^*$-substitutes I:  $K_*(A)$ countable}
 
In this section we show that if $A$ is a (nuclear) $C^*$-algebra with $K_*(A)$ countable 
then there exists a separable  (nuclear) sub-$C^*$-algebra $\theta A \subseteq A $  which 
is weak $K$-equivalent to $A$.

\begin{Def} A $*$-homomorphism $f: A \to  B $ is a {\emph{weak
      $K$-equivalence}} if the induced map $f_*: K_*(A) \to K_*(B)$ is
  an isomorphism, \cite{RS}.
\end{Def} 
Note that if $A$ satisfies the UCT   then
 a weak $K$-equivalence $f: A \to B$  lifts to a $KK$-class $\mu \in KK_0(A,B)$. 
If $B$ is also in the UCT class then this class may be chosen to be $KK$-invertible, so that 
$A$ is $KK$-equivalent to $B$. 

In the other direction, if $\mu \in KK_0(A,B)$ is an invertible class then it induces an isomorphism 
$\mu/ : K_*(A) \overset{\cong}\to K_*(B)$ but it does not necessarily arise from a map $A \to B$. 
Here are two examples:

\begin{enumerate}
\item  $M_3(\CC )$ and $M_2(\CC )$ are $KK$-equivalent but there is no map $M_3(\CC) \to M_2(\CC) $ inducing this equivalence.  

\item   $C(\CC P^2)$ and $C(S^2\vee S^4)$  are $KK$-equivalent, but there is no map 
of spaces that can induce this equivalence, since $K^*(\CC P^2)$ and $K^*(S^2 \vee S^4)$ are
not isomorphic as rings.  The authors learned the cohomology version
of this example as students from an unpublished paper of Steenrod,
since published as \cite{Steenrod}.

\end{enumerate}

\begin{Pro}\label{jonathan}
Let $A$ be a $C^*$-algebra and suppose that $K_*(A)$ is countable.  Then there exists a separable subalgebra $F$ of $A$ such that the inclusion map $\iota : F \to A$ induces a surjection $\iota _*: K_*(F)\to K_*(A)$. 
\end{Pro}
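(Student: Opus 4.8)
The plan is to exploit the fact that $K$-theory is computed from projections and unitaries over the unitalization, so that a countable $K$-group can be carried by only countably many elements of $A$. First I would recall the standard description of the generators: every class in $K_0(A) = \ker\big(K_0(A^+)\to K_0(\CC)\big)$ is represented by a formal difference $[p]-[q]$ of projections $p,q \in M_n(A^+)$ satisfying the rank-matching condition that defines the reduced group, and every class in $K_1(A) = K_1(A^+)$ is represented by a unitary $u \in M_n(A^+)$. Since $A^+ = A \oplus \CC$ as a vector space, each matrix entry of such a $p$, $q$, or $u$ has a well-defined $A$-component.

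Next, because $K_*(A)$ is countable, I would fix a countable set $\{g_i\}_{i\in\NN}$ generating $K_*(A) = K_0(A)\oplus K_1(A)$ as an abelian group, and for each $i$ choose a representing projection or unitary in some $M_{n_i}(A^+)$. Collecting the $A$-components of all the (finitely many, for each $i$) matrix entries of all these representatives yields a countable subset $S \subseteq A$. I then set $F = C^*(S)$, the $C^*$-subalgebra of $A$ generated by $S$; being generated by a countable set, $F$ is separable.

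The key observation is that each chosen representative already lives in $M_{n_i}(F^+)$: its entries lie in $F + \CC 1 = F^+$, and the purely algebraic identities $p^2 = p = p^*$, respectively $u u^* = u^* u = 1$, that hold in $M_{n_i}(A^+)$ continue to hold verbatim in $M_{n_i}(F^+)$. Hence each $g_i$ is the image under $\iota_*$ of the corresponding class computed in $K_*(F)$, where $\iota : F \to A$ is the inclusion. Since the $g_i$ generate $K_*(A)$, the map $\iota_*$ is surjective.

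The only point requiring care, and the place where I expect the main (minor) obstacle, is the bookkeeping around the unitalization for $K_0$: one must check that the representatives can be chosen so that $[p]-[q]$ defines a genuine element of the \emph{reduced} group $K_0(F)$, which one arranges by padding with scalar projections $1_k$ so that the scalar ranks match, and that $\iota_*$ of this reduced class is indeed $g_i$. This is standard and causes no real difficulty, precisely because we only need surjectivity: we never have to arrange that relations holding in $K_*(A)$ already hold in $K_*(F)$, which is what an injectivity statement would demand.
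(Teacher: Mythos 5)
Your proof is correct and follows essentially the same route as the paper's: pick a countable family of projection/unitary representatives for generators of $K_*(A)$, let $F$ be the separable $C^*$-subalgebra generated by the (countably many) matrix entries of these representatives, and note that the representatives then define classes in $K_*(F)$ mapping onto the generators, so $\iota_*$ is surjective. The only difference is that you track the unitalization $A^+$ and the rank-matching for the reduced $K_0$ explicitly, where the paper simply says the representatives lie in $A\otimes M_n$; this is added precision, not a different method.
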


 \begin{proof}   Since $K_*(A) $ is countable we may list a countable family of projections 
and unitaries that generate $K_0$ and $K_1$ respectively. Each of these lies in some 
$A\otimes M_n$.  Take the (countable) collection of elements of $A$ that are the matrix entries 
of this family and let $F$ be the subalgebra of $A$ that they generate.  Then it is clear that 
the map $\iota _* : K_*(F) \to K_*(A)$ is surjective. 
\end{proof}

The map $\iota _* : K_*(F) \to K_*(A) $  probably is not injective in general. To remedy this problem we use the following construction, due to Ilan Hirshberg.

\begin{Lem} 
Suppose given  a $C^*$-algebra $A$ and a $C^*$-subalgebra $\iota : B  \to  A$. Suppose
$x \in K_0(B)$ and $\iota _*(x) = 0$. Then there  are elements $\{a_1, \dots a_n \} $ of $A$ with the
property that if $B'$ is the $C^*$-subalgebra generated by $B \cup \{a_1, \dots, a_n\}$  with 
inclusion map $\iota ' : B \to B' $, then  
$\iota _*' (x)  = 0 \in K_0(B')$. 
\end{Lem}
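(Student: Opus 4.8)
The plan is to deduce the statement from the continuity of $K$-theory under directed colimits: the vanishing of $\iota_*(x)$ over $A$ is a phenomenon that must already be visible over a subalgebra assembled from finitely much data, and the task is to name that subalgebra.

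First I would organize the intermediate subalgebras into a directed system. For each finite subset $S \subseteq A$ let $B_S = C^*(B \cup S)$ be the $C^*$-subalgebra generated by $B$ and $S$, and order these by inclusion of the finite sets. Since $S \cup S'$ is again finite with $B_S, B_{S'} \subseteq B_{S \cup S'}$, the family $\{B_S\}$ is directed, with injective connecting $*$-homomorphisms; note $B = B_{\emptyset}$ is the least element. Moreover every $a \in A$ lies in $B_{\{a\}}$, so $\bigcup_S B_S = A$, and being an increasing union of $C^*$-subalgebras equal to the complete algebra $A$ it is closed, whence $\varinjlim_S B_S = A$.

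Next I would apply continuity of $K_0$ to obtain $K_0(A) = K_0(\varinjlim_S B_S) \cong \varinjlim_S K_0(B_S)$. The class $x \in K_0(B)$ has a canonical, compatible image in each $K_0(B_S)$ under the inclusion $B = B_{\emptyset} \to B_S$, and its image in the colimit $K_0(A)$ is exactly $\iota_*(x) = 0$. In a directed colimit of abelian groups an element coming from the bottom vertex dies in the colimit if and only if it already dies at some finite stage, so there is a finite $S_1 = \{a_1, \dots, a_n\}$ for which the image of $x$ in $K_0(B_{S_1})$ is $0$. Taking $B' = B_{S_1}$ and $\iota' \colon B \to B'$ the inclusion then gives $\iota'_*(x) = 0$, as required.

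The argument is essentially formal, so there is no deep obstacle; the only points needing care are the identification $\varinjlim_S B_S = A$ and the legitimate invocation of continuity. If one prefers to avoid citing continuity, the same conclusion follows by unwinding $\iota_*(x) = 0$ by hand: writing $x = [p] - [q]$ with $p,q$ projections over $B$, the equality $[p] = [q]$ in $K_0(A)$ is witnessed by an auxiliary projection $s$ and a partial isometry implementing the Murray--von Neumann equivalence $p \oplus s \sim q \oplus s$ in a matrix algebra over $A$; adjoining to $B$ the finitely many matrix entries of $s$ and of that partial isometry yields the desired $B'$. The non-unital case is handled by carrying this out inside the unitalizations, and that bookkeeping is the only mildly delicate point. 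The identical reasoning applies verbatim to $K_1$.
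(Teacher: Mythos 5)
Your proposal is correct, and your primary argument takes a genuinely different route from the paper's. The paper proves the lemma by hand: it writes $x = [p] - [q]$, unravels the vanishing of $\iota_*(x)$ into an explicit stable-equivalence identity $upu^* \oplus h = w(vqv^*\oplus h)w^*$ whose witnesses $u, v, w, h$ live in matrix rings over $A$ (the paper's text says ``over $B$,'' evidently a typo), and takes $a_1, \dots, a_n$ to be the finitely many matrix entries of those witnesses, so that the same computation runs inside $B'$. Your main argument instead organizes the subalgebras $B_S = C^*(B \cup S)$, $S \subseteq A$ finite, into a directed system with $\varinjlim_S B_S = A$ (the union is literally all of $A$, as you note), and then quotes continuity of $K_0$ under directed colimits plus the standard fact that an element of the bottom group dying in a directed colimit dies at a finite stage. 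This is cleaner and visibly uniform in the functor (it gives the $K_1$ lemma, and indeed the analogue for any functor commuting with filtered colimits, with no change), whereas the paper's proof is self-contained and makes the witnesses explicit. Two caveats on your route, neither a gap: first, since $A$ here is typically non-separable (that is the whole point of Section 10), your directed system is not sequential, so you must cite continuity of $K$-theory for arbitrary directed systems rather than the sequential version stated in some textbooks; second, the proof of that continuity theorem is itself essentially the witness-chasing the paper performs, so your formal argument outsources rather than avoids it --- and in fact your fallback sketch (adjoining the matrix entries of the auxiliary projection and the partial isometry implementing $p \oplus s \sim q \oplus s$, with unitalization bookkeeping) is precisely the paper's proof.
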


\begin{proof}
Represent $x = [p] - [q]$ where $p$ and $q$ are projections in matrix rings over $B$.  
The fact that $\iota _*(x) = 0 $ means that we have
\[
[p] - [q] = [t] - [t]
\]
for some trivial projection $t$. Unraveling this leads us to the equation
\[
upu^* \oplus h = w(vqv^*\oplus h)w^*
\]
for some unitaries $u, v, w$ and some projection $h$, where $u, v, w,$ and $h$ lie in matrix rings over
$B$. Take the set $\{ a_1, \dots a_n \}$ to be the (finite!) collection of matrix coefficients 
in the matrices $u, v, w, h$. Then it is obvious that the same calculations that took place in $A$ 
can take place in $B'$, and so $\iota _*' (x) = 0$ as desired.
\end{proof}

\begin{Lem}
Suppose given  a $C^*$-algebra $A$ and a $C^*$-subalgebra $\iota : B  \to  A$. Suppose
$x \in K_1(B)$ and $\iota _*(x) = 0$. Then there  are elements $\{a_1, \dots a_n \} $ of $A$ with the
property that if $B'$ is the $C^*$-subalgebra generated by $B \cup \{a_1, \dots, a_n\}$  with 
inclusion map $\iota ' : B \to B' $, then then 
$\iota _*' (x)  = 0 \in K_1(B')$. 
\end{Lem}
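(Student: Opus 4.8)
The plan is to mimic the proof of the preceding ($K_0$) lemma, but the essential difference is that triviality in $K_1$ is a \emph{homotopical} rather than a purely algebraic condition, and converting that homotopy into finite data is where the real work lies. First I would represent $x$ by a unitary $u$ in some matrix algebra $M_k(B^+)$ over the unitalization. The hypothesis $\iota_*(x) = 0$ then means that, after enlarging by identity blocks, the image $\iota(u) \oplus 1_m$ lies in the connected component $U_0$ of the identity in the unitary group of $M_{k+m}(A^+)$ for some $m$. In the $K_0$ lemma this triviality was witnessed by a single finite equation among matrix entries; here it is witnessed a priori only by a continuous path of unitaries, which could involve uncountably many elements of $A$.

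The key tool that resolves this is the standard description of $U_0$ in a unital $C^*$-algebra: a unitary lies in the connected component of the identity if and only if it is a finite product of exponentials of self-adjoint elements. Applying this to $\iota(u) \oplus 1_m$, I obtain self-adjoint elements $h_1, \dots, h_r \in M_{k+m}(A^+)$ with
\[
\iota(u) \oplus 1_m = \exp(ih_1)\exp(ih_2)\cdots\exp(ih_r).
\]
This replaces the homotopy by a finite algebraic datum, precisely the move that was automatic in the $K_0$ case.

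Now I would let $\{a_1, \dots, a_n\}$ be the collection of all matrix entries of the $h_j$ that lie in $A$ (the non-scalar parts of the entries, since each entry lies in $A^+ = A \oplus \CC$); this is a finite set. Let $B'$ be the $C^*$-subalgebra generated by $B \cup \{a_1, \dots, a_n\}$, with inclusion $\iota'$. Then each $h_j$ already lies in $M_{k+m}((B')^+)$, and because each $\exp(ih_j)$ is a norm-convergent limit of polynomials in $h_j$, the product on the right-hand side is computed inside $M_{k+m}((B')^+)$. Hence the same identity shows that $u \oplus 1_m$ lies in $U_0$ of the unitary group of $M_{k+m}((B')^+)$, so $\iota'_*(x) = [u] = 0$ in $K_1(B')$.

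The main obstacle is therefore conceptual rather than computational: one must recognize that the homotopy implicit in $\iota_*(x) = 0$ can be upgraded to a finite product of exponentials. Once the exponential description of $U_0$ is invoked, extracting the finitely many elements of $A$ and transplanting the identity into $B'$ proceeds exactly as in the previous lemma, using only that a $C^*$-subalgebra is norm-closed and hence contains the exponentials of its self-adjoint elements.
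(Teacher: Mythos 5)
Your proof is correct, and it reaches the conclusion by a genuinely different (though closely related) route than the paper's. The paper works directly with the homotopy: since the path $t \mapsto u_t$ in $U_{n+k}(A)$ from $u \oplus I$ to $I$ is compact, one picks finitely many unitaries along it whose consecutive members are close in norm, adjoins their matrix entries to $B$, and then reconnects consecutive members inside $B'$ by short paths, using the fact that two sufficiently close unitaries in a $C^*$-algebra are homotopic via a logarithm. (As an aside, the paper's closeness condition is misstated as $|a_j^{-1}a_{j+1}| < 1$, which is impossible for unitaries; it should read something like $\|a_j^{-1}a_{j+1} - 1\| < 2$.) You instead invoke the standard characterization of the connected component of the identity in a unital $C^*$-algebra as the set of finite products $e^{ih_1}\cdots e^{ih_r}$ of exponentials of self-adjoint elements, adjoin the entries of the exponents $h_j$ rather than points on the path, and observe that the factorization of $u \oplus 1_m$ then holds verbatim inside $M_{k+m}((B')^+)$, where it directly exhibits membership in $U_0$. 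The two arguments rest on the same compactness phenomenon — the exponential characterization you quote is itself proved by exactly the paper's discretization — but the division of labor differs: the paper must re-run the "close unitaries are homotopic" argument inside the subalgebra $B'$, whereas your transplantation into $B'$ is purely algebraic, since self-adjointness and norm limits of polynomials pass to any $C^*$-subalgebra and $t \mapsto e^{ith_j}$ supplies the homotopy for free. Your version is cleaner and less error-prone at the cost of citing a slightly heavier standard fact; the paper's is self-contained but requires more care inside $B'$.
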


\begin{proof} Represent $x$ by $u \in U_n(B)$.  The fact that $\iota _*(x) = 0$
translates into the existence of a continuous path of unitaries $u_t \in U_{n+k}(A) $
for some $k$ such that $u_0 = u\oplus I $ and $u_1 = I$.  Pick a finite sequence 
of elements $a_j$ on this path with $a_0 = u_0$, $a_n = I$, and with 
 the property that $|a_{j}^{-1}a_{j+1}| < 1$.   Then we may construct a path in $U_{n+k}(B')$ 
connecting these same elements, and hence $u\oplus I $ is in the path component 
of the identity of $U_{n+k}(B')$, showing that $\iota _*'(x) = 0.$
\end{proof}

\begin{Lem}\label{ilan}
 Suppose given  a $C^*$-algebra $A$ and a $C^*$-subalgebra $\iota : B  \to  A$ with 
associated map 
\[
\iota _* : K_*(B) \longrightarrow K_*(A) .
\]
Suppose that $Ker(\iota _*)$ is countable.  Then there exists a countable number of elements $\{a_j\}$ of $A$ 
such that if we let $B'$ denote the $C^*$-algebra generated by $B$ and by the 
$\{a_j\}$ and  let $\iota ' : B \to B'$ denote the inclusion, then 
\[
Ker (\iota _*) = Ker (\iota _*')   .
\]
If $Ker (\iota _*)$ is finitely generated then only a finite number of additional elements 
are needed. 
\end{Lem}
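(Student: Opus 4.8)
The plan is to reduce the statement to the two preceding lemmas (the $K_0$ and $K_1$ versions) by killing a generating set of $Ker(\iota_*)$ one element at a time and then assembling all the adjoined elements into a single enlargement. The first observation is purely formal: since $B \subseteq B' \subseteq A$, the inclusion $\iota \colon B \to A$ factors as $B \to B' \to A$, so on $K$-theory $\iota_* = j_* \circ \iota'_*$, where $j \colon B' \to A$ is the inclusion. This already gives $Ker(\iota'_*) \subseteq Ker(\iota_*)$ for free, no matter which elements $\{a_j\}$ we adjoin. Consequently the real content of the lemma is the reverse inclusion, and the construction below is aimed entirely at producing it.

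Next I would choose a countable generating set for $Ker(\iota_*)$. Because $\iota_*$ is a graded map, $Ker(\iota_*)$ splits as its $K_0$-part together with its $K_1$-part, so I may take a countable family $\{x_1, x_2, \dots\}$ of homogeneous generators, each lying in $K_0(B)$ or $K_1(B)$ and satisfying $\iota_*(x_i) = 0$. Applying the appropriate one of the two preceding lemmas to each $x_i$ yields a finite set $S_i \subseteq A$ such that $x_i$ maps to $0$ in $K_*(B_i)$, where $B_i$ is the $C^*$-algebra generated by $B \cup S_i$. I would then set $\{a_j\} = \bigcup_i S_i$, a countable union of finite sets and hence countable, and let $B'$ be the $C^*$-algebra generated by $B$ together with all the $a_j$.

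To finish I would verify $Ker(\iota_*) \subseteq Ker(\iota'_*)$. Since $B_i \subseteq B'$, the map $\iota'_* \colon K_*(B) \to K_*(B')$ factors through $K_*(B_i)$, and $x_i$ already dies in $K_*(B_i)$; hence $\iota'_*(x_i) = 0$ for every generator $x_i$. As $Ker(\iota'_*)$ is a subgroup of $K_*(B)$ and the $x_i$ generate $Ker(\iota_*)$, it follows that $Ker(\iota_*) \subseteq Ker(\iota'_*)$, which combined with the automatic inclusion gives the desired equality. The finitely generated case is identical, using a finite generating set, so that only finitely many elements $a_j$ are adjoined. The one point that must be handled carefully — and the mild obstacle here — is precisely this monotonicity and factoring argument: adjoining the elements for $x_i$ could in principle disturb the kernel, so one must note that enlargement can only \emph{enlarge} the kernel (by the automatic inclusion applied to successive subalgebras), so that a class killed in some $B_i$ remains killed in the larger $B'$, while the upper bound $Ker(\iota_*)$ is never exceeded. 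This traps the kernel of $\iota'_*$ exactly at $Ker(\iota_*)$.
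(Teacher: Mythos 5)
Your proposal is correct and follows essentially the same route as the paper's own (very terse) proof: choose a countable set of homogeneous generators of $Ker(\iota_*)$, kill each one using the preceding $K_0$ and $K_1$ lemmas, and adjoin all the resulting finite sets at once. The only material you add is the explicit verification of the two inclusions --- the automatic containment $Ker(\iota'_*) \subseteq Ker(\iota_*)$ from factoring $\iota$ through $B'$, and the reverse containment from factoring $\iota'$ through each $B_i$ --- which the paper leaves implicit.
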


\begin{proof} This follows immediately from the previous two lemmas-  we simply choose 
generators for $Ker (\iota _*)$ and kill them off by adding all of the needed additional 
elements at once.
\end{proof}

\begin{Thm} (I. Hirshberg)\label{ilan}  Suppose that $A$ is a $C^*$-algebra with $K_*(A)$ countable. Then there exists an ascending
sequence of separable sub-$C^*$-algebras of $A$
\[
F_1 \subset F_2 \subset F_3 \subset \dots 
\]
with coherent inclusion maps $\iota _n : F_n \to A$ such that
each map $\iota _{n*} : K_*(F_n) \to K_*(A) $ is surjective. 
Let  $\theta A = \Dirlim F_j $. 
 Then $\theta A$ is separable and  the induced inclusion  map 
$
\iota : \theta A  \longrightarrow A
$
yields an isomorphism 
\[
\iota _* : K_*(\theta A ) \overset{\cong}\longrightarrow K_*(A).
\]

\end{Thm}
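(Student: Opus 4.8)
The plan is to build the ascending sequence $F_1 \subset F_2 \subset \cdots$ by alternating two moves: a \emph{surjectivity} move that enlarges the current subalgebra so that it surjects onto $K_*(A)$, and an \emph{injectivity} move that kills off spurious kernel elements introduced at earlier stages. First I would apply Proposition \ref{jonathan} to obtain a separable subalgebra that surjects onto $K_*(A)$; this furnishes $F_1$. The surjectivity of each $\iota_{n*}: K_*(F_n) \to K_*(A)$ will be maintained automatically, since enlarging a subalgebra can only enlarge its image in $K_*(A)$.

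The heart of the construction is controlling the kernels so that they vanish in the limit. Having chosen $F_n$, the kernel $\Ker(\iota_{n*}: K_*(F_n) \to K_*(A))$ is a subgroup of the countable group $K_*(F_n)$, hence countable. I would invoke Lemma \ref{ilan} (the countable-kernel lemma) to adjoin countably many elements of $A$, producing $F_{n+1}$ so that the classes generating $\Ker(\iota_{n*})$ become zero already in $K_*(F_{n+1})$. Concretely, for each generator $x$ of $\Ker(\iota_{n*})$, the two preceding lemmas (the $K_0$ and $K_1$ versions) guarantee a finite set of matrix coefficients from $A$ witnessing the triviality of $\iota_{n*}(x)$; adding all these at once, over all generators, yields $F_{n+1}$, still separable because only countably many elements are adjoined. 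By construction $\iota_{n+1}$ is again surjective on $K$-theory, and the image of $\Ker(\iota_{n*})$ under the inclusion-induced map $K_*(F_n) \to K_*(F_{n+1})$ is zero.

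With the sequence in hand I would set $\theta A = \Dirlim F_j$, which is separable as a direct limit of separable algebras. Surjectivity of $\iota_*: K_*(\theta A) \to K_*(A)$ is immediate from the surjectivity of each $\iota_{n*}$ together with continuity of $K$-theory on direct limits, $K_*(\theta A) \cong \Dirlim K_*(F_j)$. For injectivity, suppose $\xi \in K_*(\theta A)$ maps to $0$; by continuity $\xi$ is represented by some $x \in K_*(F_n)$ with $\iota_{n*}(x) = 0$, so $x \in \Ker(\iota_{n*})$. But the next stage was built precisely to annihilate this kernel: the image of $x$ in $K_*(F_{n+1})$ is $0$, hence $\xi = 0$ in the limit.

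The main obstacle is the bookkeeping that ensures the kernel introduced at each stage is genuinely eliminated rather than merely stalled, i.e. verifying that a class vanishing in $K_*(A)$ does vanish at a \emph{finite} stage of the telescope. This is exactly what the continuity of $K$-theory under sequential direct limits provides: every element of $K_*(\theta A)$, and every relation among such elements, is detected at a finite level. The only subtlety worth flagging is coherence of the inclusions $\iota_n$ — one must check that the enlargement at each step is compatible with the maps to $A$ so that the $\iota_n$ assemble into a single map $\iota: \theta A \to A$ — but this is automatic since all the $F_n$ are honest subalgebras of $A$ and all maps are the corresponding inclusions.
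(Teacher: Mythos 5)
Your proposal is correct and follows essentially the same route as the paper: Proposition \ref{jonathan} supplies $F_1$ with surjective $K$-theory map, Lemma \ref{ilan} is applied iteratively to kill each (countable) kernel at the next stage, and continuity of $K$-theory under direct limits gives both surjectivity and injectivity of $\iota_*$ in the limit. The only additions you make — noting that $\Ker(\iota_{n*})$ is countable because $F_n$ is separable, and that surjectivity persists under enlargement — are points the paper leaves implicit, and they are handled correctly.
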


\begin{proof} We use Lemma \ref{jonathan}  to construct $F_1$ together with 
the map 
\[
\iota _1 : K_*(F_1) \to K_*(A) 
\]
 which induces a surjection in $K$-theory. 
Then repeatedly use Lemma \ref{ilan} to construct the higher $F_n$.  This gives us 
an ascending 
sequence of sub-$C^*$-algebras 
\[
F_1 \subset F_2 \subset F_3 \subset \dots 
\]
with coherent inclusion maps $\iota _n : F_n \to A$  and
\[
Ker(\iota _{n*}) \subseteq Ker [K_*(F_n) \to K_*(F_{n+1}) ].
\]

  Since the map 
\[
\iota _1 : K_*(F_1) \to K_*(A) 
\]
 is surjective the induced map 
\[
\iota _* : K_*(\theta A) \to K_*(A) 
\]
 is surjective.  Finally,  we claim that $\iota _* $ 
is injective, and hence an isomorphism.  Suppose that $\iota _*(y) = 0$. 
Then the class $x$ must arise in some $K_*(F_n)$ with $\iota _{n*}(x) = 0$. 
But then $x \in Ker(\iota _{n*})$ and so $x = 0 \in K_*(F_{n+1})$.  Thus 
$x = 0 \in K_*(A)$ and the proof is complete.

\end{proof}

\begin{Cor} In Theorem \ref{ilan},  if $A$ is nuclear then $\theta A$ may be constructed to be separable and nuclear.
\end{Cor}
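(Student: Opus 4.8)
The plan is to interleave Hirshberg's construction from the preceding theorem with a second enlargement procedure that forces nuclearity at every stage, rather than trying to repair the algebra $\theta A$ after the fact. The obstruction to simply reusing the $\theta A$ already built is that nuclearity is not inherited by subalgebras, so the separable algebras $F_n$ produced above need not be nuclear even when $A$ is; and enlarging $\theta A$ to a nuclear algebra in one step breaks the isomorphism $K_*(\theta A)\cong K_*(A)$, since the enlargement could introduce new classes that die in $K_*(A)$. The one external input I will use is the standard fact that nuclearity is \emph{separably inheritable}: if $A$ is nuclear (separable or not) and $B\subseteq A$ is a separable subalgebra, then there is a separable \emph{nuclear} subalgebra $\widehat{B}$ with $B\subseteq \widehat{B}\subseteq A$. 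This is obtained by iterating the completely positive approximation property of $A$: given a countable dense subset of $B$, for each finite subset and each tolerance one selects completely positive contractions $A\to M_k\to A$ approximately factoring the identity on that finite set, adjoins the countably many images of the matrix units to $B$, and repeats; the resulting separable subalgebra inherits the completely positive approximation property from $A$ and is therefore nuclear.

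With this tool in hand, I would build an increasing sequence of separable subalgebras
\[
F_1 \subseteq G_1 \subseteq F_2 \subseteq G_2 \subseteq \cdots \subseteq A
\]
as follows. Take $F_1$ from Proposition \ref{jonathan}, so that $K_*(F_1)\to K_*(A)$ is surjective, and let $G_1\supseteq F_1$ be a separable nuclear enlargement as above. Given $G_n$, note that $G_n$ separable forces $K_*(G_n)$ countable, so the kernel-reduction lemma established above applies to the inclusion $G_n\hookrightarrow A$ and yields a separable $F_{n+1}\supseteq G_n$ with
\[
\Ker\big(K_*(G_n)\to K_*(A)\big)\subseteq \Ker\big(K_*(G_n)\to K_*(F_{n+1})\big).
\]
Then let $G_{n+1}\supseteq F_{n+1}$ be a separable nuclear enlargement, and set $\theta A=\Dirlim G_n$.

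It remains to check that $\theta A$ has the required properties, and this I expect to be a routine adaptation of the argument already given. Separability is clear. Since $F_1\subseteq G_1\subseteq\theta A$, the map $K_*(\theta A)\to K_*(A)$ is surjective. For injectivity, because $F_{n+1}\subseteq G_{n+1}$ the kernel condition upgrades to $\Ker(K_*(G_n)\to K_*(A))\subseteq \Ker(K_*(G_n)\to K_*(G_{n+1}))$, so any class in $K_*(\theta A)$ that dies in $K_*(A)$ arises from some $K_*(G_n)$ and already dies at the finite stage $K_*(G_{n+1})$, hence is zero; thus $\iota_*\colon K_*(\theta A)\to K_*(A)$ is an isomorphism. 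Finally $\theta A=\Dirlim G_n$ is an inductive limit of the nuclear algebras $G_n$, and an inductive limit of nuclear $C^*$-algebras is nuclear, so $\theta A$ is separable and nuclear as required.

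The hard part is entirely the separable-inheritability input together with the bookkeeping needed to interleave the two enlargements without destroying the $K$-theory isomorphism; once the nuclear enlargement $\widehat{B}$ is available, everything else follows the template of the preceding theorem.
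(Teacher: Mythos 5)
Your proof is correct, and it works for the same underlying reason as the paper's: since nuclearity does not pass to subalgebras, one must adjoin images $\omega(M_k)$ of completely positive contractive factorizations through matrix algebras, interleaved with the kernel-killing elements of the lemma, and both your argument and the paper's do exactly this. The difference is in the organization, and it is worth comparing. The paper runs a single diagonal induction: at stage $n$ it adjoins the kernel-killing elements together with just \emph{one} image $\omega(M_j)$, where the factorization is chosen to approximate the identity to within $1/n$ on the first $n$ elements of fixed dense sequences in each earlier stage; the completely positive approximation property is then verified directly on the closed union, and the paper explicitly notes that none of the finite stages $F_n$ need be nuclear --- only the limit is. You instead modularize: you quote the standard fact that nuclearity is separably inheritable to produce genuinely nuclear separable stages $G_n$, and then quote permanence of nuclearity under increasing unions to conclude. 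Your route buys cleaner bookkeeping and reusable black boxes (both permanence facts are standard, e.g. in Blackadar), at the price that each $G_n$ conceals its own infinite inner iteration, so your construction is a nested induction rather than a single one; indeed your sketched proof of separable inheritability is essentially the paper's diagonal argument, so the two proofs coincide once unwound. Your $K$-theoretic bookkeeping is handled correctly and matches the theorem's: surjectivity comes from $F_1 \subseteq G_1$, the kernel-reduction lemma applies to $G_n \hookrightarrow A$ because separability of $G_n$ makes $K_*(G_n)$ countable, and injectivity in the limit follows from
\[
\Ker\big(K_*(G_n)\to K_*(A)\big)\subseteq \Ker\big(K_*(G_n)\to K_*(G_{n+1})\big),
\]
which your interleaving guarantees. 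In particular your opening worry --- that a nuclear enlargement may introduce new classes dying in $K_*(A)$ --- is correctly neutralized by killing those classes at the next stage rather than trying to avoid them.
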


\begin{proof} 
We construct inductively an increasing sequence of separable subalgebras $F_n$ of $A$, as follows. 
$F_1$ will be the one described as in the proof of  Theorem \ref{ilan}. 

Choose a countable dense subset of the unit ball of $F_1$, call it $S_1$. Regard $S_1$ as a sequence.  Since $A$ is nuclear, we can find completely positive contractions $\psi:A \to M_k$, $\omega:M_k \to A$ for some $k$ such that
\[
 ||\omega(\psi(a)) - a || < 1,
\]
 where $a$ is the first element in $S_1$.

Now, let $F_2$ be the subalgebra generated by $F_1$, all the elements which are added according to the proof above, and the image of the map $\psi$  (which is finite dimensional, so it is still separable). Now choose a dense subset $S_2$ of the unit ball of $F_2$, again ordered as a sequence.

Suppose we constructed 
\[
F_1 \subset F_2 \dots \subset F_n,
\]
 along with dense sequences $S_1,S_2,\dots S_n$ of the respective unit balls. Pick the first $n$ elements of each of the sets $S_1,...,S_n$, and call this set $S$ (it has at most $n^2$ elements). Pick completely positive contractions $\psi:A \to M_j$, $\omega:M_j \to A$ for some $j$ such that
\[
||\omega(\psi(a)) - a || < 1/n
\]
 for all $a \in S$. Now, modify the definition of $F_{n+1}$ to be generated by the elements as in the proof of the Theorem along with $\omega(M_j)$.

The closure of the union, $\theta A$, is now nuclear. To see this, one
needs to verify that $\theta A$ has the Completely Positive
Approximation Property, CPAP (\cite{L}, p. 170).
One may start with a finite subset $X$ of the unit ball and an
$\epsilon >0$. It can be assumed that $X$ is in the union of the
$S_n$'s, since they are dense. If one goes far enough out in the
sequence of inclusions (e.g. find an $N$ so that $1/N<\epsilon $ and
$X$ is contained in the union of the first $N$ elements of each of
$S_1,...,S_N$), then the maps $\psi $ (restricted to $F$) and $\omega$
(whose image is in $\theta A$) which were used to define $F_{n+1}$ now
witness the CPAP for the finite set $X$ to within tolerance $\epsilon$. (None of the $F_n$'s need be nuclear themselves, but the union is.)
 
\end{proof}
 
 \begin{Rem} Our construction of  the subalgebra $\theta A $ in $A$ in Theorem \ref{ilan} involves 
many choices and hence there is no reason to think that $\theta A$ is uniquely defined. At best one 
might hope that any two choices would be $KK$-equivalent. This would follow at once if $\theta A$ 
satisfied the UCT .
\end{Rem}

\section{$C^*$-substitutes II: bootstrap entries}

We would like to know that every $C^*$-algebra $A$ has a commutative  (or at least a bootstrap) 
$C^*$-algebra that is weakly $K$-equivalent to it.  In the previous section we showed that if 
$K_*(A)$ is countable then 
up to weak $K$-equivalence we can replace $A$ by a separable subalgebra.  If $K_*(A)$ is 
uncountable then obviously any substitute will be non-separable, but still we could hope for 
commutativity. In this section we demonstrate that it is almost possible to have a commutative 
substitute. 

 If  $A$ satisfies the UCT then $A$ is $KK$-equivalent to a commutative $C^*$-algebra $C$, 
but the invertible $KK$-elements that link them are not necessarily implemented by maps $C \to A$
or vice versa. In this section we prove that if $A$ satisfies the UCT then 
there exists a $2$-step solvable  (hence bootstrap) 
$C^*$-algebra $\beta A$ and an auxiliary $C^*$-algebra $T$ together with maps 
$\beta A \to T \leftarrow S^3A$   that are weak $K$-equivalences.

The following lemmas and the theorem are variants of the  original
argument of the second author, (\cite{Top2}, Lemma 3.1) 
used in the proof of the K\"unneth formula and also
the revised argument due to Blackadar (\cite{Blackadar}, Theorem 23.51).

\begin{Lem} Suppose that $K_1(A) \cong \ZZ ^s $ with $s$ finite,
  countably infinite, or uncountable . Then there exists a map 
\[
f:  \oplus_s C_0(\RR) \longrightarrow A \otimes \KK 
\]
such that the induced map 
\[
f_* : K_1(  \oplus_s C_0(\RR)  )  \longrightarrow K_1(A\otimes \KK )
\]
is an isomorphism (and the induced map on $K_0$ is trivial). 
\end{Lem}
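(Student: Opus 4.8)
The plan is to realize a free basis of $K_1(A)$ one generator at a time by $*$-homomorphisms out of the model algebra $C_0(\RR)$, and then to glue these together into a single homomorphism from the direct sum by pushing their ranges into mutually orthogonal corners of the stabilization $A \otimes \KK$. First I would fix the isomorphism $K_1(A) \cong \ZZ^s$ and let $\{x_i\}$, indexed by $s$, be the resulting basis; since $K$-theory is stable we identify $K_1(A \otimes \KK) \cong K_1(A)$ throughout. Because $C_0(\RR)^+ = C(S^1)$ is the universal $C^*$-algebra generated by a single unitary, a $*$-homomorphism $C_0(\RR) \to B$ is the same datum as a unitary $u \in B^+$ with $u \equiv 1$ modulo $B$, and under this correspondence the canonical generator of $K_1(C_0(\RR)) \cong \ZZ$ is sent to $[u] \in K_1(B)$. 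Since $A \otimes \KK$ is stable, every class in $K_1(A \otimes \KK)$ is represented by such a single unitary, with no passage to matrices required, so each $x_i$ equals $[u_i]$ for a unitary $u_i$ with $u_i - 1 \in A \otimes \KK$, giving a map $f_i : C_0(\RR) \to A \otimes \KK$ whose induced map carries the generator to $x_i$.

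The second step is the orthogonal assembly. Writing $\KK = \KK(\HH)$, I would decompose $\HH = \bigoplus_i \HH_i$ into mutually orthogonal infinite-dimensional subspaces indexed by $s$; for countable $s$ this is routine, while for uncountable $s$ one takes $\HH$ of Hilbert-space dimension $s$, so that $A \otimes \KK$ is correspondingly non-separable, as it must be once $K_1(A)$ is uncountable. Each corner inclusion $A \otimes \KK(\HH_i) \hookrightarrow A \otimes \KK$ is a $K$-theory isomorphism, so I may arrange each representative to satisfy $u_i - 1 \in A \otimes \KK(\HH_i)$, that is, route $f_i$ into the $i$-th corner. The ranges $f_i(C_0(\RR))$ then lie in pairwise orthogonal subalgebras, which is exactly the condition needed for the family $(f_i)$ to assemble into a single $*$-homomorphism
\[
f : \bigoplus_s C_0(\RR) \longrightarrow A \otimes \KK
\]
out of the $c_0$-direct sum.

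Finally I would verify the $K$-theory claims. Since $K$-theory commutes with $c_0$-direct sums, one has $K_0(\bigoplus_s C_0(\RR)) = \bigoplus_s K_0(C_0(\RR)) = 0$ and $K_1(\bigoplus_s C_0(\RR)) = \bigoplus_s \ZZ$, so the assertion about $K_0$ is automatic because the source is zero. On $K_1$, the map $f_*$ sends the $i$-th standard generator to $x_i$ by construction; as $\{x_i\}$ is a basis of $K_1(A \otimes \KK) \cong \ZZ^s$ indexed by the same set, $f_*$ carries a basis bijectively to a basis and is therefore an isomorphism.

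I expect the main obstacle to be the orthogonal assembly together with the uncountable case: one must check that representing each $x_i$ already by a unitary supported in its own corner $A \otimes \KK(\HH_i)$ does not alter its class, and that enough mutually orthogonal infinite-dimensional corners are available, which forces the choice of a Hilbert space $\HH$ of dimension at least $s$. The identification of maps out of $C_0(\RR)$ with unitaries and the concluding counting argument are then routine.
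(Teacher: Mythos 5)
Your proof is correct and is in essence the paper's own argument: represent a set of free generators of $K_1(A)$ by unitaries congruent to $1$, force their ranges to be mutually orthogonal, and assemble the resulting maps $C_0(\RR) \to A\otimes\KK$ into a single map on the $c_0$-direct sum. The one step the paper does not spell out is the orthogonalization, which it dismisses with ``without loss of generality we may take these generators to be mutually orthogonal''; your routing of the $i$-th unitary into the corner $A\otimes\KK(\HH_i)$ for a decomposition $\HH = \oplus_i \HH_i$ is precisely the justification that phrase stands for, so in the countable case the two proofs coincide step by step. Where you add something genuine is the uncountable case. A separable Hilbert space does not split into uncountably many orthogonal infinite-dimensional subspaces, and in fact the lemma as literally stated, with $\KK$ the compacts on a separable space (which is how $\KK$ is fixed earlier in the paper), is false for uncountable $s$: take $A = C^*_r(F_s)$, whose $K_1$ is free abelian on $s$ generators; the canonical faithful trace on $A$ tensored with a faithful state on $\KK$ gives a faithful state on $A\otimes\KK$, and a $C^*$-algebra with a faithful state $\phi$ contains at most countably many mutually orthogonal nonzero positive elements (over any finite orthogonal subfamily of norm-one positives the values of $\phi$ sum to at most $1$), whereas any $f$ inducing an injection on $K_1$ would have all of its uncountably many component maps nonzero, with mutually orthogonal ranges. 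So your insistence on taking $\KK = \KK(\HH)$ with $\dim\HH = s$ in that case is not a cosmetic refinement but a necessary repair of both the proof and the statement; this is the single point at which your argument, rightly, departs from the paper.
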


\begin{proof} 
Choose unitaries  $\{u_1, u_2, \dots  \} \subset (A\otimes \KK)^+ $ which represent  a minimal set of generators of $K_1(A)$. Without loss of generality 
we may take these generators to be mutually orthogonal. They induce the obvious map 
\[
\oplus _s C(S^1) \longrightarrow (A\otimes \KK)^+
\]
which is an isomorphism on $K_1$.  Define $f$ to be the restriction of this map to $ \oplus_s C_0(\RR) $; it factors through $A\otimes \KK$ 
and the result follows.
\end{proof}

\begin{Lem}  
Suppose that $K_0(A) \cong \ZZ ^r $ with $r$ finite, countably infinite, or uncountable. Then there exists a map 
\[
f:  \oplus_s C_0(\RR) \longrightarrow SA \otimes \KK 
\]
such that the induced map 
\[
f_* : K_1(  \oplus_s C_0(\RR)  )  \longrightarrow K_1(SA\otimes \KK )
\]
is an isomorphism. Suspending, we obtain  a map $g$, 
\[
g:  \oplus _s C_0(\RR ^2) \cong S(\oplus_s C_0(\RR)) \longrightarrow S^2A \otimes \KK 
\]
such that the induced map 
\[
g_{*}: K_0( \oplus _s C_0(\RR ^2)) \longrightarrow K_0(S^2A \otimes \KK )\cong K_0(A) 
\]
is an isomorphism, and the induced map on $K_1$ is trivial.

\end{Lem}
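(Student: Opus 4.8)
The plan is to deduce this from the preceding Lemma by means of the suspension isomorphism in $K$-theory. First I would note that suspension shifts the $K$-groups, so that $K_1(SA) \cong K_0(A) \cong \ZZ^r$; thus $SA$ satisfies the hypothesis of the preceding Lemma, with $K_1(SA)$ free of rank $r$. Applying that Lemma with $SA$ in place of $A$ produces a map
\[
f: \oplus_r C_0(\RR) \longrightarrow SA \otimes \KK
\]
inducing an isomorphism on $K_1$ and the trivial map on $K_0$, which is exactly the first assertion (with $s = r$).

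For the second assertion I would simply apply the suspension functor $B \mapsto SB = C_0(\RR) \otimes B$ to $f$, obtaining a $*$-homomorphism
\[
g = Sf: S\big(\oplus_r C_0(\RR)\big) \longrightarrow S\big(SA \otimes \KK\big) = S^2 A \otimes \KK .
\]
Here I would use that suspension commutes with the ($c_0$-)direct sum and that $SC_0(\RR) = C_0(\RR) \otimes C_0(\RR) \cong C_0(\RR^2)$, identifying the domain with $\oplus_r C_0(\RR^2)$ as in the statement.

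What remains is the $K$-theoretic bookkeeping, which I expect to be the only delicate point. By naturality of the suspension isomorphism $K_n(B) \cong K_{n+1}(SB)$, the map $g_* = (Sf)_*$ on $K_0$ corresponds, under the identifications $K_0(SB) \cong K_1(B)$, to $f_*$ on $K_1$, hence is an isomorphism; likewise $g_*$ on $K_1$ corresponds to $f_*$ on $K_0$, hence is trivial. Finally I would identify the target by Bott periodicity and stability, $K_0(S^2 A \otimes \KK) \cong K_0(S^2 A) \cong K_0(A)$, yielding the displayed isomorphism. The main obstacle is purely one of indexing: one must track which parity of $K$-group the suspension isomorphism carries at each stage, taking care that the naturality square genuinely matches $(Sf)_*$ on $K_0$ with the isomorphism $f_*$ on $K_1$, and not with the trivial map on the other parity.
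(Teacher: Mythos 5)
Your proof is correct and is precisely the argument the paper intends: the paper states this lemma without a separate proof, the first assertion being the preceding Lemma applied to $SA$ (using $K_1(SA) \cong K_0(A) \cong \ZZ^r$, so $s=r$) and the second being obtained by suspending $f$, exactly as you do. Your tracking of the naturality of the suspension isomorphism, together with the identifications $S(\oplus_r C_0(\RR)) \cong \oplus_r C_0(\RR^2)$ and $K_0(S^2A \otimes \KK) \cong K_0(A)$, supplies exactly the bookkeeping the paper leaves implicit.
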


Combining these two lemmas gives us the desired result. 

\begin{Thm}
Suppose that $A$ is a $C^*$-algebra with $K_*(A) $ free abelian. Then 
\begin{enumerate}
\item
There is a commutative 
$C^*$-algebra 
$C$ which is a direct sum of copies of $C_0(\RR ^2 )$ and  $C_0(\RR^1 )$  and a map 
\[ 
h : C \longrightarrow SA \otimes \KK 
\]
such that the induced map 
\[
h_* : K_*(C) \longrightarrow   K_*(SA\otimes  \KK )  \cong K_{*-1}(A)
\]
is an isomorphism. 

\item     
     Suspending, 
there is a a commutative $C^*$-algebra 
$SC$ which is a direct sum of copies of $C_0(\RR ^3 )$ and  $C_0(\RR^2 )$  and a map 
\[ 
h : SC \longrightarrow S^2A \otimes \KK 
\]
such that the induced map 
\[
h_* : K_*(SC) \longrightarrow   K_*(S^2A\otimes  \KK )  \cong K_*(A)
\]
is an isomorphism.

\end{enumerate}

\end{Thm}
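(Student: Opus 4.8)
The plan is to build $C$ as a direct sum of two blocks, one realizing $K_0(SA\otimes\KK)$ and one realizing $K_1(SA\otimes\KK)$, and to assemble $h$ from the maps produced by the two preceding lemmas. Since $K_*(A)$ is free abelian we may write $K_0(A)\cong\ZZ^r$ and $K_1(A)\cong\ZZ^s$ for cardinals $r,s$, and by the suspension isomorphism together with stability of $K$-theory under $\otimes\KK$ we get $K_0(SA\otimes\KK)\cong K_1(A)\cong\ZZ^s$ and $K_1(SA\otimes\KK)\cong K_0(A)\cong\ZZ^r$. Because $K_0(C_0(\RR^2))\cong\ZZ$, $K_1(C_0(\RR^2))=0$, $K_0(C_0(\RR))=0$ and $K_1(C_0(\RR))\cong\ZZ$, the natural candidate is
\[
C=\Big(\bigoplus_s C_0(\RR^2)\Big)\oplus\Big(\bigoplus_r C_0(\RR)\Big),
\]
which has $K_0(C)\cong\ZZ^s$ and $K_1(C)\cong\ZZ^r$, matching $K_*(SA\otimes\KK)$ degree by degree. (For infinite $r,s$ one invokes continuity of $K_*$ under the direct limit that expresses an arbitrary $c_0$-sum as a limit of its finite subsums.)

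Next I would produce the two halves of $h$. Applying the first lemma to $A$ (using $K_1(A)\cong\ZZ^s$) gives $f\colon\bigoplus_s C_0(\RR)\to A\otimes\KK$ inducing an isomorphism on $K_1$ and the zero map on $K_0$; suspending yields $Sf\colon\bigoplus_s C_0(\RR^2)\to SA\otimes\KK$, which is an isomorphism on $K_0$ and zero on $K_1$, so this block realizes exactly the $\ZZ^s\cong K_0(SA\otimes\KK)$ summand. Applying the second lemma to $A$ (equivalently, the first lemma to $SA$, since $K_1(SA)\cong K_0(A)\cong\ZZ^r$) gives $g\colon\bigoplus_r C_0(\RR)\to SA\otimes\KK$ inducing an isomorphism onto $K_1(SA\otimes\KK)\cong\ZZ^r$; here $K_0$ of the source vanishes, so there is nothing further to check. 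Restricting $h$ to $Sf$ on the first block and to $g$ on the second then gives an isomorphism on $K_*$ by additivity over direct sums.

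The one genuine technical point, which I expect to be the main obstacle, is that a single $*$-homomorphism out of the direct sum $C$ forces the ranges of $Sf$ and $g$ inside $SA\otimes\KK$ to be orthogonal, whereas the two lemmas only furnish the maps separately. I would resolve this by exploiting stability: identifying $SA\otimes\KK\cong SA\otimes M_2(\KK)\cong M_2(SA\otimes\KK)$, I place $Sf$ in the upper-left diagonal corner and $g$ in the lower-right corner. The two corner inclusions $SA\otimes\KK\to M_2(SA\otimes\KK)$ induce isomorphisms on $K$-theory by matrix stability while having orthogonal ranges, so their sum is a bona fide $*$-homomorphism $h\colon C\to SA\otimes\KK$ whose induced map is the direct sum of the two $K$-theory isomorphisms; this proves (1). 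Part (2) then follows formally by applying the suspension functor: $SC$ is the direct sum of $s$ copies of $C_0(\RR^3)$ and $r$ copies of $C_0(\RR^2)$, the map $Sh\colon SC\to S^2A\otimes\KK$ induces $(Sh)_*=h_*$ up to the degree shift, and $K_*(S^2A\otimes\KK)\cong K_*(A)$ by double suspension and stability, so $(Sh)_*$ is again an isomorphism.
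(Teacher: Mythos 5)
Your proposal is correct and follows essentially the same route as the paper: the paper's proof takes exactly the same algebra $C = \big(\oplus_s C_0(\RR^2)\big) \oplus \big(\oplus_r C_0(\RR)\big)$ and the same map $h = Sf \oplus g$ assembled from the two preceding lemmas, and obtains part (2) by suspending. The only difference is that you spell out, via the two orthogonal corner embeddings of $SA\otimes\KK$ into $M_2(SA\otimes\KK)\cong SA\otimes\KK$, why the two $*$-homomorphisms can be combined into one with orthogonal ranges---a point the paper's notation $Sf\oplus g$ leaves implicit.
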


\begin{proof} For the first statement, take  
\[
  Sf \oplus g  :   \big(    \oplus_s C_0(\RR^2 ) \big) \oplus \big(        \oplus_s C_0(\RR^1)  \big) \longrightarrow SA \otimes \KK   .
\]
For the second part, simply suspend.

\[
h = S^2f \oplus Sg  :   \big(    \oplus_s C_0(\RR^3 ) \big) \oplus \big(        \oplus_s C_0(\RR^2)  \big) \longrightarrow S^2A \otimes \KK   .
\]
 \end{proof}

Here is a restatement of the previous results couched in terms of $\beta A$. 

\begin{Thm}  Suppose given a  $C^*$-algebra $A$ with $K_*(A)$ free abelian.
Then there exists a $C^*$-algebra $\beta A$ with the following properties:
\begin{enumerate}
\item There is a map $h:  \beta A \longrightarrow   S^2A\otimes \KK $ which induces an 
isomorphism 
\[
h_* :   K_*(\beta A) \overset{\cong}\longrightarrow K_*(A)
\]
so that $\beta A$ is weakly $K$-equivalent to $S^2A \otimes \KK $. 
\item If $A$ is separable (or, more generally, if $K_*(A)$ is countable) then 
$\beta A$ is separable.
\item $\beta A$ is commutative and is the direct sum of copies of $C_0(\RR ^3)$ and $C_0(\RR ^2 )$ . 
\item If $K_*(A)$ is countable then $\beta A$ is in the bootstrap category. 
\end{enumerate}

\end{Thm}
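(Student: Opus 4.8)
The plan is to recognize that this theorem is essentially a repackaging of the one just proved, with the extra assertions about separability and bootstrap membership obtained by a counting argument. First I would set $\beta A := SC$, where $SC$ and the map $h\colon SC \to S^2A\otimes\KK$ are exactly those produced in part (2) of the preceding theorem. By construction $SC$ is the direct sum of copies of $C_0(\RR^3)$ and $C_0(\RR^2)$, and $h_*\colon K_*(SC)\to K_*(S^2A\otimes\KK)\cong K_*(A)$ is an isomorphism, so properties (1) and (3) hold with no further work. In particular $\beta A$ is weakly $K$-equivalent to $S^2A\otimes\KK$ in the sense of the Definition above, since $h$ induces an isomorphism on $K_*$.

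For (2) I would count summands. Writing $K_0(A)\cong\ZZ^r$ and $K_1(A)\cong\ZZ^s$, the two preceding lemmas produce $s$ copies of $C_0(\RR^3)$ (from the map $f$ realizing the $K_1$-generators) and $r$ copies of $C_0(\RR^2)$ (from the map $g$ realizing the $K_0$-generators), so $\beta A$ has exactly $r+s$ summands. If $K_*(A)$ is countable then the ranks $r$ and $s$ are at most countable, and hence $\beta A$ is a countable direct sum of the separable algebras $C_0(\RR^k)$, so it is itself separable.

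For (4), when $K_*(A)$ is countable I would identify $\beta A\cong C_0(Y)$, where $Y$ is the disjoint union of $s$ copies of $\RR^3$ and $r$ copies of $\RR^2$. Since $r,s\le\aleph_0$, the space $Y$ is a locally compact, second countable, Hausdorff space, so $C_0(Y)$ is a separable commutative $C^*$-algebra, and every such algebra lies in the bootstrap category \cite{Top2}. One could equally invoke closure of the bootstrap category under countable direct sums together with the fact that each $C_0(\RR^k)$ is in it.

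The main point, such as it is, lies in (4): one must know that the relevant closure property of the bootstrap category (equivalently, that all separable commutative algebras satisfy the UCT) applies to a countably infinite direct sum, not merely to finite ones. Everything else is immediate from the preceding theorem, since the genuinely analytic work of building the maps $f$ and $g$ that realize generators of $K_*(A)$ by maps out of Euclidean algebras was already carried out there, and no new estimates or $KK$-computations are required.
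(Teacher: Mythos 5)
Your proposal is correct and coincides with the paper's proof, which consists of the single line ``Take $\beta A = SC$ as above,'' leaving properties (2)--(4) implicit exactly as you verified them. Your counting of summands ($s$ copies of $C_0(\RR^3)$ for the $K_1$-generators, $r$ copies of $C_0(\RR^2)$ for the $K_0$-generators) and the observation that a countable direct sum of the $C_0(\RR^k)$ is separable and lies in the bootstrap category are precisely the routine checks the authors suppress.
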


\begin{proof} Take $\beta A = SC$ as above.
\end{proof}

If $K_*(A)$ is not free abelian then our results are unfortunately not so neat. Here is what happens:

\begin{Thm} 
\label{oldapprox}
 Let $A$ be a $C^*$-algebra. Then there exists a $C^*$-algebra $\beta A$ with the following 
properties:
\begin{enumerate}
\item 
\[
K_*(\beta A) \cong K_*(A).
\]
\item If $A$ is separable then $\beta A$ is separable. 
\item If $K_*(A)$ is countable then $\beta A$ is in the bootstrap category. 
\item $\beta A$ fits into a short exact sequence of the form
\[
0 \to C_0(X_1)\otimes\KK  \to \beta A  \to C_0(X_2)\otimes\KK  \to 0
\]
where $X_j$ consist of disjoint unions of lines, planes, and their suspensions. Thus $\beta A$ is a solvable
$C^*$-algebra.  If $K_*(A)$ is countable (resp. finitely generated) then the $X_j$ are disjoint unions 
of countable (resp. finite) number of components.

\item There exists an auxiliary $C^*$-algebra $T$ and maps
\[
\beta A \overset{h}\longrightarrow T \overset{j}\longleftarrow S^3A
\]
with the following properties:
\begin{enumerate}
\item The map $h$ is a weak $K$-equivalence.
\item The map $j$ is the inclusion of an ideal, and $T/S^3A $ is a contractible $C^*$-algebra. In particular, 
$j$ is also a weak $K$-equivalence. 
\end{enumerate}
 \end{enumerate}

\end{Thm}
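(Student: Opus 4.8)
\emph{Strategy.} The plan is to reduce the general case to the free-abelian case already treated above, by realizing a free resolution of $K_*(A)$ by honest $*$-homomorphisms of commutative ``suspension'' algebras and then passing to a mapping cone. The one piece of algebra we exploit is that $\ZZ$ is hereditary, so every $\ZZ/2$-graded group $K_*(A)$ admits a two-term free resolution
\[
0 \to F_1 \overset{d}\longrightarrow F_0 \overset{\pi}\longrightarrow K_*(A) \to 0
\]
with $F_0,F_1$ free $\ZZ/2$-graded abelian; when $K_*(A)$ is countable (resp.\ finitely generated) the $F_i$ can be taken countably (resp.\ finitely) generated. The candidate $\beta A$ will be the mapping cone of a geometric realization of $d$.

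\emph{Realizing the resolution.} Exactly as in the lemmas and the theorem just proved for free $K$-theory, I would realize a free $\ZZ/2$-graded group $G$ by the algebra
\[
E(G) = \Big(\bigoplus C_0(\RR^2)\ \oplus\ \bigoplus C_0(\RR^3)\Big)\otimes\KK,
\]
one copy of $C_0(\RR^2)$ (contributing to $K_0$) or $C_0(\RR^3)$ (contributing to $K_1$) per basis element, so that $K_*(E(G))\cong G$. Using those same constructions -- representing a minimal set of generators of $K_*$ by mutually orthogonal unitaries and projections in the relevant stabilized suspension of $A$ -- I would produce an honest $*$-homomorphism $\alpha\colon E(F_0)\to S^3A\otimes\KK$ inducing $\pi$ on $K$-theory and an honest $*$-homomorphism $\delta\colon E(F_1)\to E(F_0)$ inducing the inclusion $d$. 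Since $\pi d=0$ and the source summands are suspension algebras, one can arrange that $\alpha\delta$ is not merely $K$-trivial but genuinely null-homotopic.

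\emph{Building $\beta A$ and checking (1)--(4).} I would then set $\beta A$ to be the mapping cone $C_\delta$. This gives property (4) at once, via the canonical extension
\[
0 \to SE(F_0)\longrightarrow \beta A \longrightarrow E(F_1)\to 0,
\]
which presents $\beta A$ as an extension of $C_0(X_2)\otimes\KK$ by $C_0(X_1)\otimes\KK$ -- direct sums of $C_0$ of (suspensions of) lines and planes -- hence as a $2$-step solvable algebra of the stated form, with countably (resp.\ finitely) many components when the $F_i$ are so. Separability (2) and the bootstrap property (3) are inherited from $E(F_0)$ and $E(F_1)$. For (1) I would run the six-term sequence of this extension: its connecting map is identified with $\delta_*=d$, which is injective with cokernel $K_*(A)$, so the sequence collapses to $K_*(\beta A)\cong \Cok(d)\cong K_*(A)$ -- the number of suspensions (three) being chosen precisely so that the degree shift coming from $SE(F_0)$ returns the grading to that of $K_*(A)$.

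\emph{The roof (5) and the main obstacle.} Finally, the chosen null-homotopy of $\alpha\delta$ exhibits $\beta A$ and $S^3A\otimes\KK$ as connected by a weak $K$-equivalence (on $K$-theory it realizes the isomorphism $\Cok(d)\cong K_*(A)$). Replacing this comparison by its mapping cylinder produces the auxiliary algebra $T$ fitting into the roof $\beta A \overset{h}\to T \overset{j}\leftarrow S^3A$; one checks that $S^3A$ sits in $T$ as an ideal with contractible (cone-type) quotient, so that $j$, and hence $h$, is a weak $K$-equivalence, giving (5). I expect the main obstacle to be precisely the geometric realization step: turning the abstract maps $\pi$ and $d$ (integer matrices between free groups) into genuine $*$-homomorphisms of the suspension algebras, realizing them by degree maps between sums of suspended spheres, arranging $\alpha\delta$ to be null-homotopic on the nose, and tracking the suspension degrees so every $\ZZ/2$-grading lines up. Everything downstream is a formal consequence of the mapping-cone six-term sequence together with the free-abelian case.
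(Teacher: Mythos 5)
Your skeleton (two-term free resolution of $K_*(A)$, model algebras built from sums of $C_0(\RR^2)$ and $C_0(\RR^3)$, mapping cone) is the same as the paper's, but at the decisive step the two arguments part ways, and the step you flag as "the main obstacle" is a genuine, unfilled gap rather than a technical detail. The paper never realizes the resolution map $d\colon F_1\to F_0$ by a $*$-homomorphism between model algebras, and it never invokes a null-homotopy. It realizes only the surjection: a map $f\colon N\to SA\otimes\KK$ hitting chosen generators (choosing representatives inside a given algebra is always possible, by the lemmas); the kernel is then captured geometrically as $K_*(Cf)$ of the mapping cone of $f$, which is automatically free abelian; the free-abelian case is applied to $Cf$ to produce a weak $K$-equivalence $g\colon M\to SCf$; and $\beta A := C((S\pi)\circ g)$ is the cone of the composite $M\to SCf\to SN$. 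The comparison map $h\colon \beta A\to T:=C(S\pi)$ then exists for free, by functoriality of mapping cones applied to that factorization, and is a weak $K$-equivalence by the Five Lemma, while $j\colon S^3A=\ker(S\pi)\to C(S\pi)$ is an ideal inclusion with quotient the contractible cone $CN$. No homotopy of $*$-homomorphisms is ever constructed.

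Your route needs two things this machinery does not provide. First, an honest $*$-homomorphism $\delta\colon E(F_1)\to E(F_0)$ inducing $d$: a $*$-homomorphism out of a $c_0$-direct sum requires the images of distinct summands to be essentially mutually orthogonal, so whenever infinitely many basis elements of $F_1$ involve a common generator of $F_0$ you must separate their images inside a single copy of $C_0(\RR^2)\otimes\KK$ using orthogonal corners; this works for countably many summands but is impossible for uncountably many (the theorem covers arbitrary $A$, hence uncountable $K_*(A)$, and $\KK$ admits only countably many mutually orthogonal nonzero corners). Second, and fatally for part (5), the claim that $K$-triviality of $\alpha\delta$ can be upgraded to a genuine null-homotopy is false as a general principle: homotopy classes of genuine $*$-homomorphisms out of suspension algebras are not governed by $K$-theory. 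Concretely, $*$-homomorphisms $C_0(\RR^2)\to C_0(\RR^3)$ modulo homotopy are pointed maps $S^3\to S^2$ modulo homotopy, i.e.\ $\pi_3(S^2)\cong\ZZ$, so the Hopf map gives a $KK$-trivial $*$-homomorphism that is not null-homotopic; and stabilizing the target does not formally restore control by $K$-theory either (for instance every $*$-homomorphism $C_0(\RR^2)\otimes\KK\to\KK$ is zero on $K_0$ although $KK(C_0(\RR^2),\CC)\cong\ZZ$ --- the failure of unsuspended Bott periodicity for genuine homomorphisms). So the null-homotopy would have to be engineered into the construction, and nothing in your proposal does that; this is exactly the difficulty the paper's detour through $Cf$ is designed to avoid. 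A telling symptom that the gap is real: if your two steps could be carried out, the null-homotopy would let you extend $S\alpha$ over the cone $C_\delta$ to a genuine $*$-homomorphism $\beta A\to S^4A\otimes\KK$ which is a weak $K$-equivalence, a conclusion strictly stronger than part (5); but the Remark immediately following the theorem stresses that by the paper's methods one leg of the roof unavoidably points the wrong way.
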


\begin{Rem} It is interesting to compare the properties of $\beta A$ with the properties 
of $\theta A$ in Theorem \ref{ilan} under the assumption that $K_*(A)$ is countable.  On 
the one hand, $\beta A$ is a better behaved approximation for $A$ than $\theta A$ because it is  solvable and satisfies the UCT. 
On the other hand, the inclusion $\theta A \to A$ is a weak $K$-equivalence, whereas for $\beta A$
the best we can do is a sequence of $K$-equivalences 
\[
\beta A \overset{h}\longrightarrow T \overset{j}\longleftarrow S^3A,
\]
one of which points in the wrong direction! 
\end{Rem}

\begin{proof}
We may assume without loss of generality that $A$ is stable, i.e. $A \cong A\otimes\KK$. 
The case where $K_*(A)$ is free abelian is covered by the previous proposition. Consider the general case.    There is a stably commutative $C^*$-algebra $N $ with $K_*(N)$ free abelian,
 and a map 
\[
f: N \to SA\otimes\KK 
\] 
inducing a surjection
\[
K_*(N) \overset{f_*}\to K_*(SA)  \to 0.
\]
 
Form the mapping cone sequence
\[
0 \to S^2A  \longrightarrow Cf  \overset{\pi}\longrightarrow N \to 0.
\]
We may assume that $Cf$ is stable.
The associated $K$-theory sequence corresponds via the suspension isomorphism to the sequence
\[
0 \longrightarrow K_*(Cf) \longrightarrow K_*(N) \overset{f_*}\longrightarrow K_*(SA) \to 0 .
\]
Thus $K_*(Cf)$ is free abelian, and the sequence above is a free resolution of $K_*(SA)$. 
 Proposition \ref{oldapprox} tells us that there is a stably commutative $C^*$-algebra $M$
and a weak $K$-equivalence 
$g: M \to SCf$ with associated mapping cone sequence
\[
0 \to S^2Cf \to Cg \to M \to 0.  
\]
Note that $K_*(Cg) = 0$ since $g$ is a weak $K$-equivalence, and hence there is a natural diagram
\[
\begin{CD}
0 @>>>    K_*(SCf) @>(S\pi)_*>>    K_*(SN)     @>f_*>>   K_*(S^2A)    @>>>    0 \\
@.  @Ag_*A\cong A     @A1AA     @A1AA       \\
0 @>>>   K_{*}(M) @>(S\pi)_*g_*>>    K_*(SN) @>f_*>>   K_*(S^2A) @>>>    0
\end{CD}
\]

Now consider the composition 
\[
M \overset{g}\longrightarrow SCf \overset{S\pi}\longrightarrow SN
\]
and define the mapping cone of the composition by
\[
\beta A = C((S\pi)g) .
\]

The mapping cone sequence takes the form
\[
0  \to S^2N \to \beta A \to M \to 0
\]
and fits into a natural diagram
\[
\begin{CD}
0 @>>>  S^2N  @>>>   \beta A   @>>>    M   @>>> 0  \\
@.   @VV1V   @VVhV   @VVgV   @.  \\
0  @>>>    S^2N  @>>>  C(S\pi) @>>>    SCf   @>>>  0
\end{CD}
\]
Applying $K$-theory to this diagram yields the following diagram, with exact rows:
\[
\begin{CD}
@>>>   K_*(\beta A)   @>>>   K_*(M)  @>{(S\pi)_*g_*}>>  K_{*-1}(S^2N) @>>>  K_{*-1}(\beta A) @>>>   \\
   @.     @VVV   @VVg_*V    @VV\cong V    @VVh_*V    \\
@>>>   K_*(C(S\pi))   @>>>   K_*(SCf)  @>{(S\pi)_*}>>  K_{*-1}(S^2N) @>>>  K_{*-1}(C(S\pi)) @>>>
\end{CD}
\]
The map $g_*$ is an isomorphism and  the map  $(S\pi)_*$ is mono, and so the diagram simplifies 
to the diagram
   \[
\begin{CD}
0   @>>>   K_*(M)  @>{(S\pi)_*g_*}>>  K_{*-1}(S^2N) @>>>  K_{*-1}(\beta A) @>>>  0 \\
   @.     @VVg_*V   @VV\cong V    @VVh_*V       \\
0   @>>>   K_*(SCf)  @>{(S\pi)_*}>>  K_{*-1}(S^2N) @>>>  K_{*-1}(C(S\pi)) @>>> 0
\end{CD}
\]
The Five Lemma implies that the map 
\[
h_* : K_*(\beta A) \to K_*(C(S\pi))
\]
is an isomorphism. 

Recall that the map $\pi : Cf \to N$ fits into the sequence
\[
0 \to S^2A \to Cf \overset{\pi}\longrightarrow N \to 0.
\]
Suspending yields the exact sequence
\[
0 \to S^3A \to SCf \overset{S\pi}\longrightarrow SN  \to 0.
\]
Since $S\pi $ is surjective, its cone sequence fits into the following
diagram, by \cite{Top3} Proposition 2.3,
\[
\begin{CD}
@.@.  0   \\
@.@.  @VVV   \\
@.@.  S^3A  \\
@.@.  @VVjV   \\
0 @>>>   SN  @>>>  C(S\pi)   @>>>  SCf   @>>>  0  \\
@.  @.   @VVV   \\
@.@.  CN  \\
@.@.  @VVV   \\
@.@.  0   
\end{CD}
\]
where $CN$ denotes the cone on $N$, which is contractible.  In particular, the natural map $j: S^3A \to C(S\pi)$ 
is a weak $K$-equivalence.  Let $T = C(S\pi)$ for brevity.

To summarize, we have constructed $C^*$-maps 
\[
S^3A \overset{j}\longrightarrow T  \overset{h} \longleftarrow  \beta A
\]
which are both weak $K$-equivalences. The $C^*$-algebra $\beta A$ fits in a sequence of the form
\[
0 \to C_0(X_1)\otimes\KK  \to \beta A  \to C_0(X_2)\otimes\KK  \to 0
\]
and is hence two-step solvable.  This completes the proof.
\end{proof}

\begin{Rem}  If $K_*(A)$ is countable then $\beta A$ may be chosen to be in the bootstrap category, and then the UCT implies that any 
two choices will be $KK$-equivalent.  If $K_*(A)$ is free abelian then its maximal ideal space is uniquely determined up 
to homeomorphism, simply by counting components. 
\end{Rem}





\begin{thebibliography}{10}

\bibitem{Adams} J. F. Adams, 
{\emph{Stable homotopy and generalised homology}}. Reprint of the 1974 original. Chicago Lectures in Mathematics. University of Chicago Press, Chicago, IL, 1995. x+373 pp.  


\bibitem{Atiyah} M. F.   Atiyah, {\emph{Thom complexes}},  Proc. London Math. Soc. (3) {\bf{11}} (1961), 291-310.


\bibitem{Boardman}
J. M. Boardman, 
{\emph{Stable homotopy theory is not self-dual}},
Proc. Amer. Math. Soc. {\bf {26}} 1970 369-370
  
\bibitem{BG} J.C. Becker and D.H. Gottlieb, 
{\emph{A History of duality in algebraic topology}}, 
History of topology, 725-745, North-Holland, Amsterdam, 1999. 

\bibitem{Blackadar} B. Blackadar,
{\emph{K-Theory for Operator Algebras}}, 
2nd edition, MSRI Publ. {\bf{5}}. Cambridge University Press, Cambridge, 1998. xx+300 pp. 

\bibitem{BDF} L. G. Brown, R.G. Douglas, and P.A. Fillmore, 
{\emph{Extensions of $C^*$-algebras and $K$-homology}},  
Ann. of Math. (2) {\bf {105}}:2 (1977), 265-324.

\bibitem{Bowen-Series}
R. Bowen and C.  Series, 
{\emph{Markov maps associated with Fuchsian groups}},
Inst. Hautes Etudes Sci. Publ. Math. ´ No. 50 (1979), 153-170.





\bibitem{Busby} R. Busby, 
{\emph{Double centralizers and extensions of $C^*$-algebras}},
Trans. Amer. Math. Soc. {\bf{132}} (1968), 79-99.



\bibitem{Connes}
A. Connes,
{\emph{Noncommutative Geometry}}, 
Academic Press, Inc., San Diego, CA, 1994. xiv+661 pp.
 

\bibitem{Connesgravity}
A. Connes, {\emph{Gravity coupled with matter and the foundation of non-commutative geometry}}, 
 Comm. Math. Phys. {\bf{182}} (1996), no. 1, 155-176. 

\bibitem{Connes-Feldman-Weiss}
A. Connes, J. Feldman, and B. Weiss, 
{\emph{An amenable equivalence relation is generated by a single transformation}},
Ergodic Theory Dynamical Systems 1 (1981), no. 4, 431-450 (1982).

\bibitem{Connes-Skandalis}
A. Connes and G. Skandalis, 
{\emph{The longitudinal index theorem for foliations}},
Publ. Res. Inst. Math. Sci. {\bf{20}}:6 (1984), 1139-1183.
 
\bibitem{Dadarlat-Loring}
M. Dadarlat and T. Loring,
{\emph   {A universal multicoefficient theorem for the Kasparov groups}},
Duke Math. J. {\bf{84}} (1996), 355-377.


\bibitem{DL1}
C. Debord and J.-M. Lescure, {\emph{K-duality for pseudomanifolds
    with isolated singularities}}, Journal of Functional Analysis {\bf
  219} (2005), 109-133.


\bibitem{DL}
C. Debord and J.-M. Lescure, {\emph{ Index theory and groupoids}},  Geometric and topological methods for quantum field theory, 86-158, Cambridge Univ. Press, Cambridge, 2010. 

\bibitem{de la Harpe}
P. de la Harpe, {\emph{Groupes hyperboliques, alg\`ebres d'op\'erateurs et un th\'eor\`eme de Jolissaint}}, C.R. Acad. Sc. Paris, S ́erie I {\bf{307}} (1988), 771-774. 
\bibitem{E1}
H. Emerson,
{\emph{ Noncommutative Poincar\'e duality for boundary actions of hyperbolic
groups}}, 
J. Reine Angew. Math {\bf{564}} (2003), 1-33.

\bibitem{E2}
H. Emerson, 
{\emph{Lefschetz numbers for $C^*$-algebras}},  
Canad. Math. Bull {\bf{54}} (2011), 82-99. 

\bibitem{HR} N. Higson and J. Roe,
{\emph{Analytic K-homology}},
 Oxford Mathematical Monographs,  Oxford University Press, Oxford, 2000. xviii+405 pp.

 \bibitem{KKS}
D.S. Kahn, J. Kaminker, and C. Schochet, 
{\emph{Generalized homology on compact metric spaces}},
 Michigan Math J. {\bf{24}} (1977), 203-224.

\bibitem{KP} J. Kaminker and I. Putnam,
{\emph{K-theoretic Duality for Shifts of Finite Type}}, 
Comm. Math. Phys. {\bf 187} (1997), 509-522. 



\bibitem{KPW} J. Kaminker, I. Putnam, and M.F. Whittaker,
{\emph{K-Theoretic Duality for Hyperbolic Dynamical Systems}},
J. Reine Angew. Math. , DOI 10.1515/crelle-2014-0126.


\bibitem{Kasparov}    G. Kasparov,
{\emph{ Equivariant KK-theory and the Novikov conjecture}},  Invent. Math. {\bf{91}} (1988),
147-201.

\bibitem{Laca-Spielberg}
M. Laca and J. Spielberg,
{\emph{Purely infinite C∗-algebras from boundary actions of discrete groups}}, 
J. Reine Angew. Math. 480 (1996), 125-139. 
46L55 (22D25 46L05 46L80)

\bibitem{L} E. C. Lance, 
{\emph{On Nuclear $C^*$-algebras}},  J. Functional Analysis {\bf{12}} (1973), 157-176.

\bibitem{Lusztig} G. Lusztig,
{\emph{Novikov's higher signature and families of elliptic
    operators}},
J. Diff. Geom. {\bf 7} (1972), 229-256 


\bibitem{Meyer}
R. Meyer,  
{\emph{Categorical aspects of bivariant K-theory}},
K-theory and noncommutative geometry, 1–39, EMS Ser. Congr. Rep., Eur. Math. Soc., Z\"urich, 2008.



\bibitem{Meyer-Nest}
R. Meyer and R. Nest,
{\emph{Homological algebra in bivariant K-theory and other
    triangulated categories}}, I. (English summary) Triangulated categories, 236-289, London Math. Soc. Lecture Note Ser., 375, Cambridge
Univ. Press, Cambridge, 2010.



\bibitem{Mineyev-Yu}
I. Mineyev and G. Yu, 
{\emph{The Baum-Connes conjecture for hyperbolic groups}}, (English summary)
Invent. Math. {\bf 149} (2002), no. 1, 97-122.

\bibitem{Moore-Schochet} C.C. Moore and C. Schochet, 
{\emph{Global Analysis on Foliated Spaces}},
Math. Sci. Res. Inst. Publ. 9, Second Edition, Cambridge Univ. Press,
2005, 293 pages.

\bibitem{Mukai}
S. Mukai,
{\emph{Duality between $D(X)$ and $D(\hat X)$  with its application to Picard sheaves}},
Nagoya Math. J. 81 (1981), 153-175.

\bibitem{Nekrashevych}
V. V. Nekrashevych,  
{\emph{Hyperbolic groupoids and duality}}, (English summary) 
Mem. Amer. Math. Soc. {\bf 237} (2015), no. 1122, v+105 pp.  

\bibitem{Paschke} W. L. Paschke, 
{\emph{$K$-theory for commutants in the Calkin algebra}}, Pacific J. Math {\bf 95} (1981), 427-434.



 
\bibitem{RS}
J. Rosenberg and C. Schochet, 
{\emph{The K\"unneth Theorem and the Universal coefficient Theorem for Kasparov's Generalized K-functor}},
 Duke Math J., {\bf{55}} (1987), 431-474.




\bibitem{Schochet}
C. Schochet,
{\emph{The Kasparov groups for commutative C*-algebras and
    Spanier-Whitehead duality}}, Operator theory: operator algebras
and applications, Part 2 (Durham, NH, 1988), 307–321,
Proc. Sympos. Pure Math., {\bf 51}, Part 2, Amer. Math. Soc., Providence, RI, (1990). 46L80 (19K35 55P25)


\bibitem{Top2}
C. Schochet, 
{\emph{Topological methods for C*-algebras II: geometric resolutions and the K\"unneth formula}},
 Pacific J. Math. {\bf{98}} (1982), 443-458.

\bibitem{Top3}
C. Schochet,
{\emph{Topological methods for C*-algebras III: axiomatic homology}},
 Pacific J. Math. {\bf{99}} (1983), 459-483

\bibitem{Top4}
C. Schochet, 
{\emph{Topological methods for C*-algebras IV: mod p homology}},
 Pacific J. Math. {\bf{114}} (1984), 447-468.


\bibitem{UCTMilnor}
C. Schochet,
{\emph{The UCT, the Milnor Sequence, and a canonical decomposition of the Kasparov Groups}}, K-theory {\bf{10}} (1996), 49 -72.




\bibitem{Pext}
C. Schochet,
{\emph{A Pext primer: pure extensions and $lim^1$  for infinite abelian groups}}, (English summary) 
 NYJM Monographs, 1. State University of New York, University at Albany, Albany, NY, 2003. ii+67 pp. 
The book is available electronically at http://nyjm.albany.edu:8000/m/indexr.htm.

 






  
\bibitem{Spanier}
E. H. Spanier,
\emph{Algebraic Topology}, McGraw-Hill, New York, 1966.

 
 
\bibitem{SW1} E.H. Spanier and J.H.C. Whitehead, 
{\emph{Duality in homotopy theory}}, 
Mathematika {\bf{2}}  1955, 56-80.

 
\bibitem {SW2} E.H. Spanier and J.H.C. Whitehead,  
{\emph{Duality in relative homotopy theory}},
Ann. of Math. (2) {\bf{67}} 1958, 203-238

\bibitem{Spielberg}
J. Spielberg, 
{\emph{Cuntz-Krieger algebras associated with Fuchsian groups}}, (English summary) 
Ergodic Theory Dynam. Systems {\bf{13}} (1993), no. 3, 581-595.

\bibitem{Steenrod}
N. Steenrod,
{\emph{Cohomology operations, and obstructions to extending continuous
    functions}}, Advances in Math. {\bf 8}, 371–416. (1972).

 \bibitem{Voiculescu} D. V. Voiculescu,
{\emph{A non-commutative Weyl-von Neumann theorem}}, Rev. Roumaine
Math. Pures Appl. {\bf{21}} (1976), no. 1, 97-113.

\bibitem{Wiegold}
J. Wiegold,
{\emph{$Ext(\QQ,\ZZ)$ is the additive group of real numbers}},
Bull. Austral. Math. Soc. {\bf 1} (1969), 341-343.

\bibitem{Whiteheadpaper} G. W. Whitehead, 
{\emph{Generalized homology theories}}, Trans. Amer. Math. Soc. {\bf{102}} 1962, 227-283.  

\bibitem{Whitehead} G. W. Whitehead,
\emph{Elements of Homotopy Theory}, Springer Verlag, New York, 1978.
 
 


\end{thebibliography}
\end{document}